\documentclass[12pt]{amsart}

\usepackage{amsmath, amssymb, amscd, newlfont}

%
%
%
\setlength{\oddsidemargin}{0in}
\setlength{\evensidemargin}{0in}
\setlength{\textwidth}{6.5in}
\setlength{\textheight}{8.5in}

\newcommand{\bbA}{{\mathbb{A}}}

\newcommand{\bbC}{{\mathbb{C}}}

\newcommand{\bbZ}{{\mathbb{Z}}}

\newcommand{\supp}{{\mathrm{supp}}}

\newcommand{\trace}{{\mathbf{tr}}}

\usepackage[mathscr]{eucal}

\numberwithin{equation}{section}

\newtheorem{Prop}[equation]{Proposition}
\newtheorem{Lem}[equation]{Lemma}
\newtheorem{Def}[equation]{Definition}
\newtheorem{Thm}[equation] {Theorem}
\newtheorem{Cor}[equation]{Corollary}


\title
    [On Fourier Coefficients of Automorphic Forms]
    {Fourier Coefficients of Automorphic Forms and  
    Integrable Discrete Series}

\author{ Goran Mui\' c}

\address{ Department of Mathematics,
University of Zagreb,
Bijeni\v cka 30, 10000 Zagreb,
Croatia}
\email{gmuic@math.hr}

\subjclass{11E70, 22E50}
\keywords{Cuspidal Automorphic Forms, Poincar\' e Series, Fourier coefficients}
\thanks{The  author acknowledges Croatian Science Foundation grant no. 9364.}

\begin{document}
\maketitle

\begin{abstract}
Let $G$ be the group of $\mathbb R$--points of a semisimple algebraic group $\mathcal G$ defined over $\mathbb Q$. 
Assume that $G$ is  connected and noncompact. We study Fourier coefficients of Poincar\' e series attached to matrix
coefficients 
of integrable discrete series. We use these results to construct explicit automorphic cuspidal realizations,
which have appropriate Fourier coefficients $\neq 0$, of 
integrable discrete series  in families of congruence subgroups.  In the case of $G=Sp_{2n}(\mathbb R)$, we relate
our work to that of Li \cite{Li}. For $\mathcal G$ quasi--split over $\mathbb Q$, we relate our work 
to the result about Poincar\' e series due to Khare, Larsen, and Savin \cite{kls}.
\end{abstract} 
\section{Introduction}

Let $G$ be the group of $\mathbb R$--points of a semisimple algebraic group $\mathcal G$ defined over $\mathbb Q$. 
Assume that $G$ is  connected and noncompact.
Let $K$ be its maximal compact subgroup, $\mathfrak g$ be the real Lie algebra of $G$, and $\mathcal Z(\mathfrak g_{\mathbb C})$ the center of the 
universal enveloping algebra of the complexification of $\mathfrak g$.  In this paper we assume that $rank(K)=rank(G)$ so that $G$ 
admits discrete series. Let $\Gamma$ be a discrete subgroup of finite
covolume in $G$. The problem of (mostly asymptotic)
realization of discrete series as subrepresentations in the discrete part of the spectrum of
$L^2(\Gamma \backslash G)$ has been studied extensively using various methods such as cohomology of discrete subgroups and
adelic Arthur trace formula \cite{savin},  \cite{clozel}, \cite{rospeh}, \cite{degeorgy}, \cite{finis}. 
But Fourier coefficients of such realizations are not well understood. On the other hand, assuming that $\mathcal G$ is a 
quasi-split almost simple algebraic group over $\mathbb Q$, for any appropriate generic integrable discrete series $\pi$ of $G$, 
Khare, Larsen, and Savin (\cite{kls} Theorem 4.5) construct globally generic  
automorphic cuspidal representation $W$ of  $\mathcal G(\mathbb A)$, where $\mathbb A$ is the ring of adeles of $\mathbb Q$, 
such that its Archimedean component is $\pi$ i.e.,  $W_\infty=\pi$. They use classical theory of Poincar\' e series attached to matrix coefficients of 
$\pi$ which are $\mathcal Z(\mathfrak g_{\mathbb C})$--finite  and $K$--finite on the right extended to  adelic settings 
(\cite{bb}, \cite{Borel-SL(2)-book}, \cite{Borel1966}). A detailed  treatment of such series and criteria for being $\neq 0$ 
in the adelic settings can be  found  in \cite{MuicMathAnn}.  In  (\cite{kls} Theorem 4.5), Khare, Larsen, and Savin in fact prove an 
analogue of  a well--known result of Vign\' eras, Henniart and  Shahidi \cite{sh} for generic supercuspidal representations of semisimple $p$--adic  
groups. (\cite{kls} Theorem 4.5) is used to study problems in inverse Galois theory. 
On the other hand,  (possibly degenerate) Fourier coefficients of automorphic forms are important for the theory of automorphic
$L$--functions \cite{sh}, \cite{grs}, \cite{soudry}. So, it is important to study (possibly degenerate) Fourier coefficients of 
 Poincar\' e series attached to matrix coefficients of  integrable discrete series which are $K$--finite.  This is the goal of the present paper. 
The techniques used in this paper are refinements of those of Khare, Larsen, and Savin used in the proof of  (\cite{kls} Theorem 4.5), and those of 
(for adelic or for real groups) used in (\cite{MuicJNT}, \cite{MuicLie}, \cite{MuicComp}, \cite{MuicIJNT}.  We do not improve only on  results in generic 
case (see Theorems \ref{cps-13} and  \ref{cps-16})  but we also  construct very
explicit degenerate cuspidal automorphic models of integrable discrete series \cite{Li} for $Sp_{2n}(\mathbb R)$
(see Theorem \ref{cps-18}).

The paper actually has two parts: preliminary local Archimedean part
(Sections \ref{cbr}, \ref{gwm}, \ref{integrable}), and main cuspidal automorphic part (Sections \ref{paf}, \ref{cps}, \ref{app}).
Integrable discrete series for $G$ are analogues of supercupidal representations for  semi--simple
$p$--adic groups. In Section  \ref{gwm} we refine and generalize  (\cite{kls}, Proposition 4.2) discussing the  analogue of the
following simple result for $p$--adic groups which we explain in detail. 
Assume for the moment that  $G$ is a semisimple $p$--adic group, $U$ closed unimodular
subgroup of $G$, and
$\chi: U\longrightarrow \mathbb C^\times$ a unitary character. Let $(\pi, \mathcal H)$ be
an irreducible supercuspidal representation acting on the Hilbert space $\mathcal H$. Let us write $\mathcal H^\infty$ for the space of smooth
vectors (i.e., the subspace of all vectors in $\mathcal H$ which have open stabilizers). The space $\mathcal H^\infty$ is
usual smooth irreducible algebraic representation of $G$.  We say
that $\pi$ is $(\chi, U)$--generic if there exists a non--zero  (algebraic) functional
  $\lambda: \ \cal H^\infty\longrightarrow \mathbb C$ which satisfies
$\lambda(\pi(u)h)=\chi(u)\lambda(h)$,  $u\in U$, $h\in \mathcal H^\infty$. Let $\varphi$ be a non--zero matrix coefficient
of $(\pi,  \mathcal H^\infty)$. Then, $\varphi\in C_c^\infty(G)$. Next, a simple argument (see Lemma \ref{gwm-1}),
first observed by Mili\v ci\' c in his unpublished lecture notes about
$SL_2(\mathbb R)$, Schur orthogonality  relations show that for each $h\in \mathcal H^\infty$, we can select a matrix coefficient 
$\varphi$ such that  $\pi(\overline{\varphi})h=h$. Now, since $h=\pi(\overline{\varphi})h=\int_G \overline{\varphi(g)}\pi(g)h dg$ is essentially a 
finite sum, we have 
 $$
\lambda(h)=\lambda(\pi(\overline{\varphi})h)=  \int_G \overline{\varphi(g)} \lambda(\pi(g)h)dg=
\int_{U\backslash G} \left(\int_U \overline{\varphi(ug)}\chi(u)du\right) \lambda(\pi(g)h)dg.
    $$
So, selecting $h$ such that $\lambda(h)\neq 0$, we have that $\varphi$ has a non--zero Fourier coefficient
\begin{equation}\label{FC}
\mathcal F_{(\chi, U)}^{loc}(\varphi)(g)\overset{def}{=} \int_U \varphi(ug)\overline{\chi(u)}du, \ \  g \in G.
\end{equation}

Going back to our original meaning for $G$  and  assuming that $(\pi, \cal H)$ is an integrable discrete series acting on a Hilbert
space  $\mathcal H$, we try to adjust these consideration  where now  $\varphi$ is a $K$--finite matrix coefficient of $\pi$
and $\lambda$ is continuous in appropriate Fr\' echet topology on $\mathcal H^\infty$ (\cite{W3}; see  Section \ref{gwm}  for details).

Since $\varphi$ is no longer compactly supported, the main obstacle  to the extension of above result is how to
exchange  $\lambda$ and $\int_G$. This boils down to absolute convergence of the integral
$ \int_G \overline{\varphi(g)} \lambda(\pi(g)h)dg$ (see Lemma \ref{gwm-3}).
We attack this problem using results of  Casselman \cite{casselman}  and Wallach \cite{W2} on representations of moderate
growth, and  the estimates on the growth of $K$--finite matrix coefficients of discrete series due to Mili\v ci\' c \cite{milicic}.
For general $U$ and $\chi$, Theorem \ref{gwm-5} contains the result.

If we in addition assume that $U=N$ is the unipotent radical of a minimal parabolic subgroup  of $G$, and $\chi$ is a generic 
character of $N$ (i.e.,  the differential $d\chi$ is non--trivial
  on any simple root subgroup $\mathfrak n_\alpha\subset Lie(N)$), Wallach (\cite{W2}, Theorem 15.2.5) shows that the asymptotics of the function
$g\longmapsto \lambda(\pi(g)h)$ is similar to the asymptotics of a $K$--finite matrix coefficient. This implies that 
$g\longmapsto \lambda(\pi(g)h)$ is bounded on $G$ (see the proof of Corollary \ref{gwm-6}). 
Since $\varphi \in L^1(G)$ being a $K$--finite matrix coefficient of an integrable  discrete series, 
the absolute convergence $ \int_G \overline{\varphi(g)} \lambda(\pi(g)h)dg$  is obvious. So, in this case 
there exists a non--zero $K$--finite matrix
  coefficient $\varphi$ of $\pi$ such that $\mathcal F_{(\chi, N)}^{loc}(\varphi)\neq 0$ (see Corollary \ref{gwm-6}). 
In  (\cite{kls}, Proposition 4.2), Khare, Larsen, and Savin obtain slightly weaker result (for some  matrix coefficient of $\pi$ 
$K$--finite on the right only) for general discrete series relying again on results of Wallach but they use more advanced
results of the same chapter in \cite{W3}.

Going back to general $U$ and $\chi$, the existence of non--zero $K$--finite matrix
coefficients $\varphi$ of $\pi$ such that $\mathcal F_{(\chi, U)}^{loc}(\varphi)\neq 0$ is used to show that $\pi$ is
infinitesimally
  equivalent to a closed irreducible subrepresentation of certain Banach representation $I^1(G, U, \chi)$
  (see Section \ref{cbr} for definition, Corollary \ref{gwm-7}).
  This observation is of a fundamental importance  in the second part of the paper. We do not study absolute
convergence of the 
 integral defining  $\mathcal F_{(\chi, N)}^{loc}(\varphi)$ but we use some general representation theory to show that it 
 converges absolutely almost everywhere on $G$ to a function in  $C^\infty(G)$ which is $Z(\mathfrak g_{\mathbb C})$--finite and
 $K$--finite on the 
right  (see Lemma \ref{cps-5}). When $U=N$,  the absolute convergence of the integral is a well--known
classical result of
Harish--Chandra.

In the appendix to the paper (see Section  \ref{integrable})
we describe large integrable representations in usual $L$--packets of discrete series. The results  of this section were
communicated to us by Mili\v ci\' c.  It shows that many discrete series are large and integrable.

In the second part of the paper (Sections \ref{paf}, \ref{cps}, \ref{app}), we consider automorphic forms and prove
the main results of the paper. Let $\Gamma\subset G$ be congruence subgroup with respect to the arithmetic structure given by the
fact that $\mathcal G$ defined over $\mathbb Q$ (see \cite{BJ}, or Section \ref{app}).
In Section \ref{paf}, we review
basic notions of automorphic forms, and introduce a classical construction of automorphic forms via Poincar\' e series 
$$
P(\varphi)(x)=P_\Gamma(\varphi)(x)=
\sum_{\gamma\in \Gamma}\varphi(\gamma x), \ \ x\in G,
$$ 
attached to  $K$--finite matrix coefficients $\varphi$ of  integrable 
discrete series $\pi$ of $G$ (\cite{bb}, \cite{Borel-SL(2)-book}, \cite{Borel1966}).
We recall the criteria for $P(\varphi)$ being non--zero (\cite{MuicMathAnn}, Theorem  4-1 and  4-9).

In Sections  \ref{cps} and \ref{app}, we 
assume that $\chi$ and $U$ are as above, with addition that $U$ is the group of $\mathbb R$--points of the
unipotent radical of a proper $\mathbb Q$--parabolic $\mathcal P\subset \mathcal G$. Then,  $U\cap \Gamma$ is cocompact in $U$.
We define the $(\chi, U)$--Fourier coefficient of a function 
$f\in C^\infty(\Gamma\backslash G)$ as usual 

$$
\mathcal F_{(\chi, U)}(f)(x)=\int_{U\cap \Gamma\backslash U} f(ux)\overline{\chi(u)}du,
$$
where we use a normalized measure on a compact topological space $U\cap \Gamma\backslash U$. If $W$ is a
$(\mathfrak g, K)$--submodule of the space of automorphic forms, then we say that $W$ is $(\chi, U)$--generic
if there exists $f\in W$ such that $\mathcal F_{(\chi, U)}(f)\neq 0$.

The main point of
Section  \ref{cps} is to compute the Fourier coefficient $\mathcal F_{(\chi, U)}(P(\varphi))$ of $P(\varphi)$, and use
the results to
study when $P(\varphi)\neq 0$ for some new cases not covered by the results of \cite{MuicMathAnn}, 
and for construction of $(\chi, U)$--generic automorphic realizations of $\pi$. The main result of Section \ref{cps} is
the following
proposition (see Proposition \ref{cps-10}):

\begin{Prop}\label{prop-1}
 Let $\pi$ be an integrable discrete series of $G$. 
  Assume that there exists  a $K$--finite  matrix coefficient
  $\varphi$ of $\pi$ such that the following holds:
  $$
  W^\Gamma_{(\chi, U)}\left(\mathcal F_{(\chi, U)}^{loc}(\varphi)\right)\neq 0.
  $$
Then, there is a realization of  $\pi$  as a $(\chi, U)$--generic cuspidal  automorphic  representation.
\end{Prop}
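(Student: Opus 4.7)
The plan is to show that the Poincar\'e series $P(\varphi)=P_\Gamma(\varphi)$ itself furnishes the required realization: I will exhibit it as a non-zero cuspidal automorphic form whose $(\chi,U)$-Fourier coefficient is non-zero, and then identify the $(\mathfrak g,K)$-module it generates with the Harish-Chandra module of $\pi$.

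First, I would invoke the unfolding identity for the $(\chi,U)$-Fourier coefficient of a Poincar\'e series that is established earlier in Section \ref{cps}, namely
\[
\mathcal F_{(\chi,U)}(P(\varphi))\;=\;W^\Gamma_{(\chi,U)}\bigl(\mathcal F^{loc}_{(\chi,U)}(\varphi)\bigr),
\]
obtained formally by decomposing $\Gamma$ into cosets modulo $U\cap\Gamma$ and using that $\chi$ is trivial on $U\cap\Gamma$. The hypothesis of the proposition is precisely that the right-hand side is non-zero, so $\mathcal F_{(\chi,U)}(P(\varphi))\neq 0$ and, in particular, $P(\varphi)\neq 0$.

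Next, since $\varphi$ is a $K$-finite matrix coefficient of the integrable discrete series $\pi$ one has $\varphi\in L^1(G)$, and the criteria reviewed in Section \ref{paf} from \cite{MuicMathAnn} then ensure that $P(\varphi)$ converges absolutely and uniformly on compacta to a cuspidal automorphic form on $\Gamma\backslash G$. Let $W$ be the $(\mathfrak g,K)$-submodule of the space of cusp forms on $\Gamma\backslash G$ generated by $P(\varphi)$ under right translation. Because the operator $\varphi'\mapsto P(\varphi')$ intertwines right translation, it defines a $(\mathfrak g,K)$-morphism from the module $M\subset C^\infty(G)$ of right translates of $\varphi$ into $W$. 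Since $\varphi$ is a non-zero $K$-finite matrix coefficient of the irreducible representation $\pi$, the module $M$ is isomorphic to the Harish-Chandra module $\pi_K$ and is therefore irreducible, so the morphism is either zero or injective; the previous step forces injectivity and hence $W\cong\pi_K$. Combined with $\mathcal F_{(\chi,U)}(P(\varphi))\neq 0$ and $P(\varphi)\in W$, this exhibits $W$ as a $(\chi,U)$-generic cuspidal automorphic realization of $\pi$.

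The main obstacle is really the unfolding identity in the first step: because $\varphi$ is not compactly supported, one has to justify interchanging the summation over $\Gamma$ with the integral over $U\cap\Gamma\backslash U$ and to make sense of the intermediate local Fourier integral $\mathcal F^{loc}_{(\chi,U)}(\varphi)$. These are precisely the convergence issues addressed in Lemma \ref{cps-5} and the results developed earlier in Section \ref{cps}; once they are in place, the remainder reduces to the irreducibility of $\pi_K$ and the elementary fact that right translation of a matrix coefficient realizes the Harish-Chandra module of $\pi$ inside $C^\infty(G)$.
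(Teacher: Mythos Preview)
Your unfolding identity
\[
\mathcal F_{(\chi,U)}(P(\varphi))\;=\;W^\Gamma_{(\chi,U)}\bigl(\mathcal F^{loc}_{(\chi,U)}(\varphi)\bigr)
\]
is not what Lemma~\ref{cps-3} says, and in fact it is false. Unfolding the Fourier coefficient gives
\[
\mathcal F_{(\chi,U)}(P(\varphi))(x)=\sum_{\gamma\in \Gamma/ U\cap\Gamma}\int_U \varphi(\gamma u x)\,\overline{\chi(u)}\,du,
\]
with $\gamma u$ rather than $u\gamma$ inside $\varphi$; after the change of variables in Lemma~\ref{cps-3} this becomes $\sum_{\gamma\in U\cap\Gamma\backslash\Gamma}\mathcal F^{loc}_{(\overline{\chi},U)}\bigl(l(x)\varphi^\vee\bigr)(\gamma)$. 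Three things go wrong relative to your claim: $\varphi$ is replaced by $\varphi^\vee$, $\chi$ by $\overline{\chi}$, and the dependence on $x$ enters through \emph{left} translation of $\varphi^\vee$, not through the argument of the $W$-series. Since $U$ is not normal in $G$, there is no reason the two expressions should agree, so you cannot conclude $\mathcal F_{(\chi,U)}(P(\varphi))\neq 0$ from the hypothesis.

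What survives is the identity \emph{at the identity}: $\mathcal F_{(\chi,U)}\bigl(P(\overline{\varphi}^{\vee})\bigr)(1)=W_{(\overline{\chi},U)}\bigl(\mathcal F^{loc}_{(\overline{\chi},U)}(\overline{\varphi})\bigr)(1)$. The paper's proof exploits exactly this. Writing $\psi=\mathcal F^{loc}_{(\overline{\chi},U)}(\overline{\varphi})$ (the complex conjugate of the hypothesis), it uses real-analyticity of $W_{(\overline{\chi},U)}(\psi)$ to find $X\in\mathfrak g$ and $n\ge 1$ with $X^n.W_{(\overline{\chi},U)}(\psi)(1)\neq 0$. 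Since right differentiation of a matrix coefficient is again a matrix coefficient, this non-vanishing transfers, via Lemma~\ref{cps-3} at $x=1$, to $\mathcal F_{(\chi,U)}\bigl(P(\overline{\varphi}^{\vee}_{d\pi(X)^n h,\,h'})\bigr)(1)\neq 0$. The required cuspidal realization is then $P(\overline{\varphi}^{\vee}_{d\pi(X)^n h,\,h'})$, not $P(\varphi)$. Your final paragraph identifying the $(\mathfrak g,K)$-module with $\pi_K$ is fine once one has the correct Poincar\'e series in hand.
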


As we explained above $\psi\overset{def}{=} \mathcal F_{(\chi, U)}^{loc}(\varphi)\in I^1(G, U, \chi)$. Next, 
the function  $W_{(\chi, U)}(\psi)$ is an automorphic form for $\Gamma$ defined via the following series
(see Lemma \ref{cps-7}):
$$
W_{(\chi, U)}(\psi)(x)=W^\Gamma_{(\chi, U)}(\psi)(x)\overset{def}{=}\sum_{\gamma \in U\cap \Gamma\backslash \Gamma}
\psi(\gamma x), \ \ x\in G.
$$
This statement   is especially important for construction $(\chi, U)$--generic automorphic cuspidal realization of
$\pi$ in view of a criterion which gives a sufficient condition that the series of this form (and more general) are not
identically zero (\cite{MuicIJNT}, Lemma 2-1, recalled as  Lemma \ref{nv-thm} in Section \ref{app}, and its formulation
given by Lemma \ref{cps-12} for series $W_{(\chi, U)}(\psi)$). We prove the following result (see Theorem \ref{cps-13}):

\begin{Thm} We fix an embedding $\mathcal G\hookrightarrow SL_M$ over $\mathbb Q$, and define 
Hecke congruence subgroups $\Gamma_1(n)$,
  $n\ge 1,$ using that embedding (see (\ref{int-0c-1})). Assume that $U$ is  a  subgroup of all upper triangular unipotent matrices in
 $G$ considered as $G 
\subset SL_M(\mathbb R)$. Let 
  $\chi$ be a unitary character $U\longrightarrow  \mathbb C^{\times}$ trivial on $U\cap  \Gamma_1(l)$ for some $l\ge 1$.
Let $\pi$ be an integrable discrete series of $G$  such that
 there exists  a  $K$--finite  matrix coefficient $\varphi$ of $\pi$ such that $\mathcal F_{(\chi, U)}^{loc}(\varphi)\neq 0$.
Then, there exists $n_0\ge 1$ such that for $n\ge n_0$ we have
 a realization of  $\pi$  as a $(\chi, U)$-generic cuspidal automorphic 
representation for $\Gamma_1(ln)$.
\end{Thm}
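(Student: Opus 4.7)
The plan is to apply Proposition \ref{cps-10} with $\Gamma = \Gamma_1(ln)$, which reduces the theorem to exhibiting, for each sufficiently large $n$, a $K$--finite matrix coefficient $\varphi$ of $\pi$ satisfying
$$
W^{\Gamma_1(ln)}_{(\chi, U)}\bigl(\mathcal F^{loc}_{(\chi, U)}(\varphi)\bigr) \neq 0.
$$
I would take the $\varphi$ supplied by the hypothesis and set $\psi := \mathcal F^{loc}_{(\chi, U)}(\varphi)$. Then $\psi \neq 0$, and by Corollary \ref{gwm-7} together with the discussion preceding Section \ref{cps}, $\psi$ lies in $I^1(G, U, \chi)$; in particular $|\psi|$ descends to an $L^1$--function on $U \backslash G$. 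Because $\Gamma_1(ln) \subset \Gamma_1(l)$, the character $\chi$ is automatically trivial on $U \cap \Gamma_1(ln)$, so by Lemma \ref{cps-7} the series $W^{\Gamma_1(ln)}_{(\chi, U)}(\psi)$ converges and defines an automorphic form for $\Gamma_1(ln)$.

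The next step is to verify the non-vanishing criterion Lemma \ref{cps-12}, i.e.\ the formulation for the series $W^{\Gamma}_{(\chi, U)}(\psi)$ of the general criterion \cite{MuicIJNT} Lemma 2-1 (recalled as Lemma \ref{nv-thm}). Pick $x_0 \in G$ with $\psi(x_0) \neq 0$ and split off the trivial coset,
$$
W^{\Gamma_1(ln)}_{(\chi, U)}(\psi)(x_0) \;=\; \psi(x_0) \;+\; \sum_{\substack{\gamma \in (U \cap \Gamma_1(ln)) \backslash \Gamma_1(ln) \\ \gamma \notin U}} \psi(\gamma x_0),
$$
so that the task becomes making the remainder strictly smaller than $|\psi(x_0)|$ for $n$ large. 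Here the Hecke congruence structure is decisive: as $n$ grows the subgroups $\Gamma_1(ln)$ shrink, and any $\gamma \in \Gamma_1(ln)$ outside $U$ is forced further and further from $U$ in $G$, so that modulo $U$ the non-trivial coset representatives eventually leave every prescribed compact set in $U \backslash G$. Combined with the $L^1$--integrability of $|\psi|$ on $U \backslash G$, this is exactly the input required by Lemma \ref{cps-12} to conclude that the tail sum above tends to $0$ in absolute value as $n \to \infty$.

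Therefore, for all $n \geq n_0$ with $n_0$ sufficiently large, $W^{\Gamma_1(ln)}_{(\chi, U)}(\psi)(x_0) \neq 0$, and Proposition \ref{cps-10} then produces the desired realization of $\pi$ as a $(\chi, U)$--generic cuspidal automorphic representation for $\Gamma_1(ln)$. The main obstacle I anticipate is the geometric/arithmetic tail estimate: quantifying how fast the non-trivial coset representatives in $\Gamma_1(ln) \setminus U$ escape compacta in $U \backslash G$, with a rate strong enough to dominate a fixed $L^1$--function as $n \to \infty$. Once that step is in place, the rest of the argument assembles mechanically from Corollary \ref{gwm-7}, Lemma \ref{cps-7}, Lemma \ref{cps-12} and Proposition \ref{cps-10}.
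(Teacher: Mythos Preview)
Your overall scaffolding is right: set $\psi=\mathcal F^{loc}_{(\chi,U)}(\varphi)$, verify Lemma \ref{cps-12} for $\Gamma=\Gamma_1(ln)$ with $n$ large, and then invoke Proposition \ref{cps-10}. But the middle step, as you wrote it, has two real problems.

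First, what you describe is \emph{not} Lemma \ref{cps-12}. That lemma is an integral criterion: one must produce a compact set $C\subset G$ with
\[
\Gamma_1(ln)\cap C\cdot C^{-1}\subset U\cap\Gamma_1(ln)
\quad\text{and}\quad
\int_{(U\cap\Gamma_1(ln))\backslash (U\cap\Gamma_1(ln))\cdot C}|\psi|>\tfrac12\int_{(U\cap\Gamma_1(ln))\backslash G}|\psi|.
\]
Your ``split off the trivial coset and bound the tail at a point $x_0$'' is a different argument, and $L^1$--integrability of $|\psi|$ on $U\backslash G$ does not by itself control a pointwise sum $\sum_{\gamma\notin U}|\psi(\gamma x_0)|$; that would require something like the estimates in the proof of Lemma \ref{cps-7} together with a dominated-convergence step, which you have not supplied. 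So as written, the non-vanishing step is a gap.

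Second, you are missing the one arithmetic observation that makes the verification of Lemma \ref{cps-12} clean and uniform in $n$: because $U$ consists of upper-triangular unipotent matrices while the congruence defining $\Gamma_1(\cdot)$ constrains only the entries with $i\ge j$, one has
\[
U\cap\Gamma_1(ln)=U\cap\Gamma_1(l)\quad\text{for every }n\ge 1.
\]
This is exactly what the paper uses. With this in hand, condition (2) of Lemma \ref{cps-12} is a statement about a \emph{fixed} quotient $(U\cap\Gamma_1(l))\backslash G$, so by $L^1$--integrability of $\psi$ there one chooses a single compact $C$ (independent of $n$) satisfying the integral inequality. Condition (1) is then the escape-to-infinity fact you correctly identified: the entries of $C\cdot C^{-1}$ are bounded, while for $x\in\Gamma_1(ln)$ each sub-diagonal entry is either $0$ or has $|x_{ij}|\ge ln$, so for $n\ge n_0$ every $x\in\Gamma_1(ln)\cap C\cdot C^{-1}$ is upper-triangular unipotent, hence lies in $U\cap\Gamma_1(ln)$. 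Now Lemma \ref{cps-12} gives $W^{\Gamma_1(ln)}_{(\chi,U)}(\psi)\neq 0$ for $n\ge n_0$, and Proposition \ref{cps-10} finishes as you said.
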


The reader may observe that $U\cap  \Gamma_1(ln)$ is independent of $n\ge 1$. So, the same $\chi$ can be used for all $\Gamma_1(ln)$. 
Also, the reader may observe that the congruence 
subgroups $\Gamma_1(ln)$ ($n\ge 1$) are neither  linearly ordered by inclusion nor their intersection is trivial as it is usually required
(see for example \cite{savin} and \cite{degeorgy}).

As a next application,   we reprove the result of Khare, Larsen, and Savin (\cite{kls}, Theorem 4.5) 
that large integrable discrete series are local components of globally generic automorphic cuspidal forms (see Theorem  \ref{cps-16}):

\begin{Thm} Let $\mathbb A$ be the ring of adeles of $\mathbb Q$.   We assume that $\mathcal G$ is quasi--split
  over $\mathbb Q$. Let $\mathcal N$ be the unipotent radical of Borel subgroup defined over $\mathbb Q$.  We assume that
  $G$ poses representations in discrete series. Let $L$ be the $L$--packet of discrete series for  $G$ such that there is a
  large
  representation in that packet which is integrable (then all are integrable by Proposition \ref{integrable-4}).
  Let $\eta: \mathcal N(\mathbb Q) \backslash \mathcal N(\mathbb A)\longrightarrow
  \mathbb C^\times$ be a unitary generic character. By the change of splitting we can select an
  $(\eta_\infty, \mathcal N(\mathbb R))$--generic discrete series $\pi$ in the $L$--packet $L$. Then, there exists
  a cuspidal automorphic module $W$ for $\mathcal G(\mathbb A)$ which is $\eta$--generic and its Archimedean component is
  infinitesimally equivalent to $\pi$ (i.e., considered as a $(\mathfrak g, K)$--module only it is a copy of (infinitely many)
  $(\mathfrak g, K)$--modules infinitesimally equivalent to $\pi$).
    \end{Thm}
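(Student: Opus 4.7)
The plan is to derive the theorem from Theorem \ref{cps-13} by a classical-to-adelic translation. Since the $L$-packet $L$ contains a large integrable representation, Proposition \ref{integrable-4} guarantees that every member of $L$ is integrable, and in particular the chosen $\pi$ is both integrable and $(\eta_\infty, N)$-generic, where $N \overset{def}{=} \mathcal{N}(\mathbb{R})$. My first step is to apply Corollary \ref{gwm-6} to $\pi$ and the generic character $\eta_\infty$ of $N$ to produce a non-zero $K$-finite matrix coefficient $\varphi$ of $\pi$ satisfying $\mathcal{F}_{(\eta_\infty, N)}^{loc}(\varphi)\neq 0$.

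Next I would align the global character $\eta$ with a classical level. Because $\eta$ factors through the compact $\mathcal{N}(\mathbb{Q})\backslash \mathcal{N}(\mathbb{A})$, its finite-adelic component $\eta_f$ has some finite conductor; for any $l$ divisible by this conductor, the open compact subgroup $K_f(l)\subset \mathcal{G}(\mathbb{A}_f)$ corresponding to $\Gamma_1(l)$ satisfies $\eta_f|_{K_f(l)\cap\mathcal{N}(\mathbb{A}_f)}=1$, and combining with $\eta|_{\mathcal{N}(\mathbb{Q})}=1$ yields $\eta_\infty|_{N\cap\Gamma_1(l)}=1$. With this $l$, Theorem \ref{cps-13} applied to $U=N$, $\chi=\eta_\infty$ and the above $\varphi$ produces, for all sufficiently large $n$, a $(\eta_\infty, N)$-generic cuspidal automorphic realization $V$ of $\pi$ inside the space of automorphic forms on $\Gamma_1(ln)\backslash G$, together with a cusp form $f\in V$ with $\mathcal{F}_{(\eta_\infty, N)}(f)\neq 0$.

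Finally I would adelize. Using strong approximation for $\mathcal{G}$ (or for its simply connected cover, with the standard transfer), the form $f$ extends to an adelic cusp form $\widetilde f$ on $\mathcal{G}(\mathbb{Q})\backslash \mathcal{G}(\mathbb{A})$ by the rule $\widetilde f(\gamma\cdot (g_\infty, k_f))\overset{def}{=}f(g_\infty)$ for $\gamma\in\mathcal{G}(\mathbb{Q})$, $g_\infty\in G$, and $k_f\in K_f(ln)$. The adelic cuspidal module $W$ generated by $\widetilde f$ is right $K_f(ln)$-invariant, and its Archimedean component has Harish--Chandra module a copy of that of $\pi$. Unfolding the global $\eta$-Fourier integral of $\widetilde f$ and using the compatibility arranged in the preceding paragraph collapses it to a non-zero constant multiple of the classical coefficient $\mathcal{F}_{(\eta_\infty, N)}(f)$, so $W$ is $\eta$-generic, as required.

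The main technical obstacle is precisely the classical-to-adelic bookkeeping of this last step: synchronizing the classical level $\Gamma_1(ln)$ with an open compact $K_f\subset \mathcal{G}(\mathbb{A}_f)$ for which strong approximation holds and $\eta_f$ is $K_f\cap \mathcal{N}(\mathbb{A}_f)$-invariant, and, when $\mathcal{G}$ is not simply connected, transferring the argument to the simply connected cover within the arithmetic framework developed in Section \ref{app}. The Archimedean side is handled entirely by Corollary \ref{gwm-6} and the integrability of $\pi$, while the delicate construction of the classical cuspidal realization is absorbed into Theorem \ref{cps-13}.
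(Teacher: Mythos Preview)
Your proposal is correct and shares all the key ingredients with the paper's proof---Corollary~\ref{gwm-6} for the local input, the non-vanishing argument underlying Theorem~\ref{cps-13}, and the unfolding identity~(\ref{ime})---but runs them in the opposite order. The paper begins on the adelic side: it forms the adelic Poincar\'e series $P(\varphi\otimes 1_L)$ directly (which is automatically in $\mathcal A_{cusp}(\mathcal G(\mathbb Q)\backslash\mathcal G(\mathbb A))$), then uses~(\ref{ime-1}) to reduce the non-vanishing of its $\eta$-Fourier coefficient to the classical statement $\mathcal F_{(\eta_\infty,\mathcal N(\mathbb R))}(P_{\Gamma_L}(\varphi))(1)\neq 0$, and establishes the latter by shrinking $L$ at a single prime exactly as in the proof of Theorem~\ref{cps-13} combined with Proposition~\ref{cps-10}. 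You instead produce the classical realization first via Theorem~\ref{cps-13} and then adelize. The practical difference is precisely the obstacle you flag: the paper's order needs only strong approximation for the unipotent group $\mathcal N$ (always available), whereas your adelization step needs some form of strong approximation for $\mathcal G$ itself, or else an extension-by-zero over the finitely many $\mathcal G(\mathbb Q)\backslash\mathcal G(\mathbb A)/(G\times K_f)$-cosets together with a check that the $\eta$-coefficient survives. Both are manageable, but the paper's route sidesteps the issue entirely.
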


In the very last part of Section \ref{app} (see Theorem \ref{cps-18}),  we consider a sort of a  converse to approach described above. 
We use criteria for $P(\varphi)\neq 0$ given by \cite{MuicMathAnn}, 
where $\varphi$ is a $K$--finite matrix coefficient of an integrable discrete series $\pi$ of 
$G=Sp_{2n}(\mathbb R)$, and show that there exists  matrix coefficients $\varphi$  with non--zero Fourier coefficients 
$\mathcal F_{(\chi, U)}^{loc}(\varphi)$  for certain unitary characters $\chi$ where  $U$ is the unipotent radical of  a Siegel parabolic subgroup of 
$Sp_{2n}(\mathbb R)$. 
In this way, any integrable discrete series $\pi$ of $Sp_{2n}(\mathbb R)$ is infinitesimally
equivalent to a closed irreducible subrepresentation of a  Banach representation $I^1(G, U, \chi)$ for certain $\chi$ (see Section \ref{cbr}). 
This has relation to the work of Li \cite{Li}, and it is related to the models discussed by Gourevitch \cite{gourevitch}. 

\vskip .2in
In closing the introduction, we would like to say that purpose of this paper is to write down typical applications in sufficient generality and to 
convince the audience how the methods of our earlier papers 
such as \cite{MuicMathAnn}, \cite{MuicComp}, and \cite{MuicIJNT}, 
are also useful for proving existence or constructing explicit examples 
of automorphic cuspidal representations with non--zero Fourier coefficients.  We do not write down an
 exhaustive list of applications. For example, using methods of \cite{MuicComp}, 
in ''Whittaker model case'' we can do much more than it is stated in Theorem  \ref{cps-16}. 

\vskip .2in
We would like to thank  A. Moy, N. Grbac, G. Savin,  and J. Schwermer for some useful
discussions. Especially, I would like to thank D. Mili\v ci\' c for his help with description of large and integrable
representations. The final version of this paper is prepared while I was visiting the Erwin
Schroedinger Institute in Vienna. I would like to thank J. Schwermer and the Institute 
for their hospitality.

\section{On Certain Banach Representations}\label{cbr}

In this section we assume that  $G$ is a connected semisimple Lie group with finite center,
$U$ is a closed unimodular subgroup of $G$, and 
$\chi: U\rightarrow \mathbb C^{\times}$ unitary character.
We state some simple results about the following Banach representation that will be used in the remainder of the paper.

We consider the Banach representation
$I^1(G, U, \chi)$ (by right translations) 
on the space of classes  of all measurable functions $f:G\rightarrow \mathbb C$ which satisfies
the following two conditions:

\begin{itemize}
\item[(i)] $f(ux)=\chi(u)f(x)$ for all $u\in U$, and for almost all $x\in G $,
\item[(ii)] $||f||_{U, 1}\overset{def}{=}\int_{U\backslash G} \left|f(x)\right| dx <\infty$.
\end{itemize}

The following is a folklore lemma, which might be useful for
unexperienced reader. We include it with a complete proof.

\begin{Lem}\label{cbr-1}
  The function $\varphi\in C_c^\infty(G)$ acts on $I^1(G, U, \chi)$ as follows: $\varphi.f(x)=\int_G \varphi(y) f(xy)dy$
  i.e., in terms of convolution $\varphi.f=f*\varphi^\vee$ (see below for the notation).
\end{Lem}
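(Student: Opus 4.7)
The plan is to interpret the action of $\varphi$ on $f\in I^1(G,U,\chi)$ abstractly as the Bochner integral $\varphi.f=\int_G \varphi(y)\,R(y)f\,dy$ in the Banach space $I^1(G,U,\chi)$, where $R$ denotes right translation; this converges absolutely because $y\mapsto R(y)f$ is continuous and $\varphi\in C_c^\infty(G)$. The content of the lemma is that this abstract element is represented by the pointwise integral $\psi(x):=\int_G\varphi(y)f(xy)\,dy$. I would carry this out in two steps: first verify that $\psi$ is a well-defined element of $I^1(G,U,\chi)$, and then identify $\psi$ with $\varphi.f$ via duality.

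For the first step, since $G$ is semisimple (hence unimodular) and $U$ is unimodular by assumption, $U\backslash G$ carries a right $G$-invariant measure, so $||R(y)f||_{U,1}=||f||_{U,1}$ for every $y\in G$. Fubini--Tonelli then yields the absolute convergence of the integral defining $\psi(x)$ for almost every $x$, together with the estimate $||\psi||_{U,1}\le ||\varphi||_1\cdot ||f||_{U,1}$; the transformation law $\psi(u_0 x)=\chi(u_0)\psi(x)$ follows by direct substitution using condition (i) for $f$.

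For the identification, I would pair both $\psi$ and $\varphi.f$ against any element of the dual space, which is naturally identified with the space of essentially bounded measurable functions $h$ on $G$ satisfying $h(ux)=\overline{\chi(u)}h(x)$, via $\Lambda_h(F)=\int_{U\backslash G}F(x)h(x)\,dx$. By the defining property of the Bochner integral,
$$
\Lambda_h(\varphi.f)=\int_G\varphi(y)\,\Lambda_h(R(y)f)\,dy=\int_G\varphi(y)\int_{U\backslash G}f(xy)h(x)\,dx\,dy,
$$
and Fubini (justified by the first step) rearranges this to $\Lambda_h(\psi)$. Since $h$ is arbitrary we conclude $\varphi.f=\psi$ in $I^1(G,U,\chi)$. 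The convolution identity $\varphi.f=f*\varphi^\vee$ (with $\varphi^\vee(y)=\varphi(y^{-1})$) then follows from the change of variable $y\mapsto x^{-1}y$ in the integral defining $\psi(x)$, which is valid by unimodularity of $G$.

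The only subtle point is the interplay between a concrete representative (the pointwise integral $\psi$) and the Bochner integral in a Banach space of equivalence classes; the duality argument above bypasses this cleanly, without the need to choose pointwise representatives explicitly.
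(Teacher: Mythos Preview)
Your proposal is correct and follows essentially the same strategy as the paper: first show that the pointwise integral defines an element of $I^1(G,U,\chi)$, then identify it with the abstract action of $\varphi$ via a separating family of continuous functionals. The only cosmetic difference is that the paper tests against functionals coming from $g\in C_c^\infty(G)$ (via $f\mapsto\int_G f(x)g(x)\,dx=\int_{U\backslash G}f(x)\bigl(\int_U g(ux)\chi(u)\,du\bigr)\,dx$) rather than against the full $L^\infty$-type dual you invoke.
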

\begin{proof} Indeed, the function $x\longmapsto \int_G \varphi(y) f(xy)dy $
is in $I^1(G, U, \chi)$ since
\begin{align*}
 \int_{U\backslash G} \left|\int_G  \varphi(y) f(xy)dy\right|dx
\le \int_{U\backslash G} \int_G   \left|\varphi(y) f(xy)\right|dy dx
&=  \int_G   \left|\varphi(y)\right|  \left(\int_{U\backslash G}   \left| f(xy)\right|dx\right)dy\\
&\le ||\varphi||_\infty \text{vol}\left(\supp{(\varphi)}\right) ||f||_{U, 1}.
\end{align*}

For $g\in C_c^\infty(G)$, we have a continuous functional on $I^1(G, U, \chi)$ given by
$$
f\longmapsto \int_{U\backslash G} f(x)\left(\int_U g(ux)\chi(u) du\right)dx =\int_G f(x)g(x)dx.
$$
So, by definition of the action of $\varphi$, we have 
\begin{align*}
&\int_G \varphi.f(x)g(x)dx=\int_{U\backslash G}  \varphi.f(x) \left(\int_U g(ux)\chi(u) du\right) dx\\
  &=\int_G  \varphi(y) \left(\int_{U\backslash G} f(xy)\left(\int_U g(ux)\chi(u) du\right)dx  \right)dy\\
  &=\int_{U\backslash G} \left( \int_G\varphi(y) f(xy) dy\right) \left(\int_U g(ux)\chi(u) du\right)dx \\
&=  \int_{G}  \left(\int_G \varphi(y) f(xy)dy \right) g(x)dx
\end{align*}
which proves the desired formula for $\varphi.f$.
\end{proof}

\vskip .2in
Now, we recall  fundamental theorem of Harish--Chandra (\cite{hc}, Section 8, Theorem 1):

\begin{Lem}\label{cbr-4} Assume that $f\in  C^\infty(G)$ is
  $\cal Z(\mathfrak g_{\mathbb C})$--finite and $K$--finite on the right.
  Let $W\subset G$ be a neighborhood of $1$ invariant under conjugation of $K$. Then, there exists a function
  $\alpha\in C_c^\infty(G)$, $\supp{(\alpha)}\subset W$, and  invariant under
  conjugation of $K$ such that $f=f\star \alpha$.
\end{Lem}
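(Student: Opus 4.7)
The plan is to produce $\alpha$ as a $K$-averaged smooth approximation to the Dirac mass at $1$, supported inside $W$, tailored to the data $J$ (a finite-codimensional ideal of $\mathcal{Z}(\mathfrak{g}_{\mathbb{C}})$ annihilating $f$ under the right regular action $R$) and $F$ (a finite set of $K$-types containing those of $f$ under the right $K$-action).

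The first step I would carry out is verifying an invariance property: for any $K$-conjugation invariant $\alpha\in C_c^\infty(G)$, right convolution $h\mapsto h\star\alpha$ preserves the left-$G$-invariant subspace
$$
\mathcal{A}_{J,F}:=\{h\in C^\infty(G):R(z)h=0\text{ for all }z\in J,\ h\text{ has right }K\text{-types in }F\}.
$$
Centrality of $\mathcal{Z}(\mathfrak{g}_{\mathbb{C}})$ in $U(\mathfrak{g}_{\mathbb{C}})$ forces $R(z)$ to commute with every right translation and hence with right convolution by any $\alpha$; the $K$-conjugation invariance of $\alpha$, combined with a change-of-variable computation using the unimodularity of $G$, forces right convolution by $\alpha$ to commute with $R(k)$ for all $k\in K$, so the right-$K$-type condition is preserved as well.

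Next, I would produce an approximate identity $\alpha_n\in C_c^\infty(G)^K$ supported in $W$: starting from any shrinking positive bump at $1$ with support in $W$, average over the action of $K$ by conjugation (the result stays supported in $W$ because $W$ is assumed $K$-conjugation invariant). Standard properties of approximate identities give $f\star\alpha_n\to f$ in the $C^\infty(G)$ topology, with each $f\star\alpha_n\in\mathcal{A}_{J,F}$ by the previous step.

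The main obstacle, and where Harish-Chandra's idea really enters, is upgrading this approximate convergence to an exact equality $f=f\star\alpha$ for some single $\alpha\in C_c^\infty(G)^K$ with $\supp\alpha\subset W$. The key structural input is that the cyclic module $V_f:=U(\mathfrak{g}_{\mathbb{C}})\cdot R(K)f\subset\mathcal{A}_{J,F}$ generated by $f$ under the right regular action is a finitely generated, $\mathcal{Z}$-finite $(\mathfrak{g},K)$-module, hence an admissible Harish-Chandra module of finite length by Harish-Chandra's admissibility theorem. Elliptic regularity for a suitable combination of the Casimirs of $\mathfrak{g}$ and $\mathfrak{k}$ shows that every element of $\mathcal{A}_{J,F}$ is real-analytic, so $f$ is determined near $1$ by the finite-dimensional jet $X\mapsto (R(X)f)(1)$ for $X$ running over a finite-dimensional quotient of $U(\mathfrak{g}_{\mathbb{C}})$ cut out by $J$ and the $F$-isotypic projection. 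Combining this finite-dimensional reduction with the approximate-identity property reduces the construction of an exact $\alpha$ to a linear-algebra problem in this finite-dimensional quotient, solvable by modifying $\alpha_{n_0}$ (for $n_0$ large enough) by a small, smooth, $K$-conjugation invariant perturbation supported in $W$; this is the essential content of Harish-Chandra's original argument in \cite{hc}.
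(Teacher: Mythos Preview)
The paper does not prove this lemma; it is stated without proof as a fundamental theorem of Harish--Chandra, with a citation to \cite{hc}, Section~8, Theorem~1. So there is no argument in the paper to compare your proposal against.

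That said, a few remarks on your sketch. Your first two steps are correct and standard. In the third step, however, your description is vague at the crucial point and does not accurately summarize Harish--Chandra's actual argument. The ``finite-dimensional jet'' claim --- that $u\mapsto (R(u)f)(1)$ factors through a finite-dimensional quotient of $\mathcal U(\mathfrak g_{\mathbb C})$ cut out by $J$ and the $F$-isotypic projection --- is not correct as stated: $\mathcal U(\mathfrak g_{\mathbb C})/\mathcal U(\mathfrak g_{\mathbb C})J$ is infinite-dimensional, and the $K$-type constraint is not an ideal condition on $u$. What one actually needs is that all the convolutions $f\star\alpha$ lie in a fixed finite-dimensional subspace of $C^\infty(G)$ (namely the $F$-isotypic part of the admissible $(\mathfrak g,K)$-module generated by $f$); this is true but requires justification you have not supplied, and once justified the approximate-identity argument finishes immediately without any perturbation step.

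Harish--Chandra's own proof in \cite{hc} is more direct and does not pass through admissibility. One builds a $K$-conjugation-invariant \emph{elliptic} element $D\in\mathcal U(\mathfrak g_{\mathbb C})$ with $R(D)f=0$: since $f$ is annihilated by a polynomial in the Casimir $\Omega$ of $\mathfrak g$ (from $\mathcal Z$-finiteness) and by a polynomial in the Casimir $\Omega_K$ of $\mathfrak k$ (from right $K$-finiteness), and since a suitable combination such as $\Omega-2\Omega_K$ is elliptic, a polynomial in these does the job. The local theory of analytic elliptic operators then produces $\alpha,\beta\in C_c^\infty(G)$ supported in $W$ with $\delta_1=\alpha+D^{t}\beta$; averaging over $K$-conjugation preserves this, and $f=f\star\delta_1=f\star\alpha$ follows directly from $Df=0$. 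Your sketch mentions the elliptic-regularity ingredient but misidentifies the mechanism by which it is used.
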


We recall the formula for convolution
$$
f\star \alpha(x)=\int_G f(xy^{-1})\alpha(y)dy=\int_G f(xy)\alpha^\vee (y)dy=\alpha^\vee. f(x), 
$$
in our previous notation. Here $\alpha^\vee (x)=\alpha(x^{-1})$. Obviously, $\alpha^\vee$ is invariant
under the conjugation by $K$ if
$\alpha$ satisfies the same.

\vskip .2in 
Thus, as a corollary, we obtain the following standard result:

\begin{Cor}\label{cbr-5} Assume that  $f\in I^1(G, U, \chi)$, 
  is $\cal Z(\mathfrak g_{\mathbb C})$--finite,  and $K$--finite on the right.
  Then, $f$ is equal to a smooth function almost everywhere, therefore can be taken to be
  a smooth. Moreover, there exists $\beta\in C_c^\infty(G)$ invariant under the conjugation by $K$ such that $f=\beta.f$.
\end{Cor}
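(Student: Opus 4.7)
The strategy is to reduce the statement to Harish--Chandra's theorem (Lemma \ref{cbr-4}) after first showing that $f$ admits a smooth representative on $G$. Assuming this reduction, Lemma \ref{cbr-4} produces $\alpha \in C_c^\infty(G)$, $K$--conjugation invariant and supported in any prescribed neighborhood of the identity, such that $f = f \star \alpha$. The identity $f \star \alpha(x) = \int_G f(xy)\alpha^\vee(y)\,dy = \alpha^\vee.f(x)$ recalled just before the corollary then gives $f = \beta.f$ with $\beta := \alpha^\vee$. The function $\beta$ inherits both compact support and $K$--conjugation invariance from $\alpha$, since $\alpha^\vee(kxk^{-1}) = \alpha(kx^{-1}k^{-1}) = \alpha(x^{-1}) = \alpha^\vee(x)$, which finishes the second assertion.

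The only substantive point is therefore the almost--everywhere smoothness of $f$. I would argue this by elliptic regularity. The $K$--finiteness hypothesis says that the right $K$--translates of $f$ span a finite--dimensional subspace $V \subset I^1(G, U, \chi)$; since $R(\mathcal Z(\mathfrak g_{\mathbb C}))$ commutes with $R(K)$, the $K$--Casimir $\Omega_K$ preserves the $K$--isotypic decomposition $V = \bigoplus V_\tau$ and acts on each $V_\tau$ as a scalar $\mu_\tau$. Writing $f = \sum f_\tau$, each $f_\tau$ is annihilated distributionally by some fixed nonzero polynomial $p$ in the Casimir $\Omega$ of $\mathfrak g$ by the $\mathcal Z$--finiteness assumption. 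Setting $D = -\Omega + 2\Omega_K$, the identities $\Omega_K f_\tau = \mu_\tau f_\tau$ and $p(\Omega) f_\tau = 0$ combine to $p(2\mu_\tau - D) f_\tau = 0$ distributionally. Since, in a Killing--orthogonal decomposition $\mathfrak g = \mathfrak k \oplus \mathfrak p$, the operator $D$ has a positive--definite principal symbol, it is elliptic of order two, and the standard elliptic regularity theorem then forces each $f_\tau$, and hence $f$, to coincide almost everywhere with a real--analytic function.

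The main obstacle is this ellipticity step: it requires interpreting the $K$-- and $\mathcal Z$--finiteness hypotheses distributionally on $G$, and invoking the non--trivial fact that although $\Omega$ by itself is not elliptic on $G$, the twist $-\Omega + 2\Omega_K$ is. An alternative route, perhaps closer in spirit to the Banach--representation framework of this section, would be to appeal to the classical theorem that a $K$--finite, $\mathcal Z(\mathfrak g_{\mathbb C})$--finite vector in any Banach $G$--representation is automatically a smooth vector, and then to obtain a smooth representative by using that for any $\varphi \in C_c^\infty(G)$ the function $\varphi.f(x) = \int_G \varphi(x^{-1}z) f(z)\,dz$ is smooth in $x$ by differentiation under the integral sign; combining this with Dixmier--Malliavin one writes $f = \varphi.h$ for some such $\varphi$ and $h \in I^1(G,U,\chi)$, producing a smooth representative directly without recourse to elliptic regularity.
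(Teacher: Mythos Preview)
Your proof is correct and follows essentially the same route as the paper: the paper's argument is simply that $\mathcal Z(\mathfrak g_{\mathbb C})$--finiteness and right $K$--finiteness in $I^1(G,U,\chi)$ pass to the distributional sense on $G$, whence ``the usual theory'' (i.e., exactly the elliptic regularity trick with $-\Omega+2\Omega_K$ that you spell out) yields real--analyticity, and then the second claim is immediate from Lemma \ref{cbr-4} together with the identity $f\star\alpha=\alpha^\vee.f$ and the observation that $\alpha^\vee$ inherits $K$--conjugation invariance. Your writeup is more detailed than the paper's two--line sketch, but the content is the same.
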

\begin{proof} Since $f$ is  $\cal Z(\mathfrak g_{\mathbb C})$--finite and $K$--finite on the right, 
  $f$ satisfies the same in the sense of
  distributions on $G$. By the usual theory it is then real analytic which proves the first claim.
  The second claim follows from above discussion.
  \end{proof}

\vskip .2in 
The following result is also standard (\cite{W1}, Corollary 3.4.7, Theorem 4.2.1 ). 

\begin{Lem}\label{cbr-6}. Assume that  $f\in I^1(G, U, \chi)$, 
  is $\cal Z(\mathfrak g_{\mathbb C})$--finite,  and $K$--finite on the right.
  Then, $(\mathfrak g, K)$--module generated by $f$ is admissible and it has a finite length.
\end{Lem}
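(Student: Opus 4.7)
The plan is to combine Corollary \ref{cbr-5} with the two Wallach results cited in the statement. First, Corollary \ref{cbr-5} lets me replace $f$ by a smooth representative and produces $\beta \in C_c^\infty(G)$, invariant under conjugation by $K$, such that $f = \beta.f$. Together with the hypothesis that $f$ is $K$-finite on the right, this exhibits $f$ as a smooth, $K$-finite, $\mathcal{Z}(\mathfrak{g}_{\mathbb{C}})$-finite vector in the Banach representation $(R, I^1(G, U, \chi))$, where $R$ denotes right translation.

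Let $V$ denote the $(\mathfrak{g}, K)$-module generated by $f$. By construction $V$ is cyclic as a $\mathcal{U}(\mathfrak{g})$-module. Since $\mathcal{Z}(\mathfrak{g}_{\mathbb{C}})$ is central in $\mathcal{U}(\mathfrak{g})$ and $f$ is annihilated by an ideal $J$ of finite codimension in $\mathcal{Z}(\mathfrak{g}_{\mathbb{C}})$, the whole of $V$ is annihilated by $J$. Thus $V$ is a finitely generated, $\mathcal{Z}(\mathfrak{g}_{\mathbb{C}})$-finite $(\mathfrak{g}, K)$-submodule of the smooth, $K$-finite vectors of $I^1(G, U, \chi)$.

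At this point I would invoke the Wallach results cited in the statement. Corollary 3.4.7 of \cite{W1} allows one to realize a $K$-finite, $\mathcal{Z}(\mathfrak{g}_{\mathbb{C}})$-finite vector in a Banach representation as a smooth vector and to control its $K$-isotypic components, yielding admissibility of the $(\mathfrak{g}, K)$-module it generates; Theorem 4.2.1 of \cite{W1} then upgrades admissibility, together with cyclicity and $\mathcal{Z}(\mathfrak{g}_{\mathbb{C}})$-finiteness, to finite length. Applied to our $V$, these give both conclusions of the lemma.

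The main obstacle is not a deep one but a bookkeeping verification: I need to confirm that $I^1(G, U, \chi)$ is genuinely a continuous Banach representation of $G$ in the sense required by the Wallach theorems, and that $\beta.f$ is smooth in that representation. Continuity of right translation in the norm $\|\cdot\|_{U,1}$ is the standard density argument using $C_c(G)$, and smoothness of $\beta.f$ for $\beta \in C_c^\infty(G)$ follows by differentiating under the integral in the formula of Lemma \ref{cbr-1}. Once this setup is in place, the admissibility and finite-length conclusions are immediate from the cited Wallach results.
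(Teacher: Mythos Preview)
Your proposal is correct and takes essentially the same approach as the paper: the paper gives no proof beyond declaring the result ``standard'' and citing \cite{W1}, Corollary 3.4.7 and Theorem 4.2.1, and your argument is precisely an unpacking of how those citations apply once $f$ is realized (via Corollary \ref{cbr-5}) as a smooth, $K$-finite, $\mathcal Z(\mathfrak g_{\mathbb C})$-finite vector in the Banach representation $I^1(G,U,\chi)$.
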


\vskip .2in
Next, we prove the following lemma:

\begin{Lem}\label{cps-5} Assume  that $\varphi\in C^\infty(G)$ is  $\mathcal Z(\mathfrak g_{\mathbb C})$--finite, $K$--finite on 
the right, and in $L^1(G)$. Then, the following holds:
\begin{itemize}
\item[(i)] The integral $\int_U \varphi(ux) \chi(u) du$ converges absolutely for almost all $x\in G$. 
\item[(ii)] The function $x\longmapsto \int_U \varphi(ux) \chi(u) du$  is
  $\cal Z(\mathfrak g_{\mathbb C})$ and $K$--finite vector in the Banach representation  $I^1(G, U, \overline{\chi})$. 
\end{itemize}
\end{Lem}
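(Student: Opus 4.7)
\emph{Proof plan.} For part (i), my plan is to invoke Fubini using unimodularity. Since $G$ and $U$ are both unimodular, $U\backslash G$ carries a $G$-invariant quotient measure $d\bar{x}$ with $\int_G h(g)\,dg = \int_{U\backslash G}\int_U h(ux)\,du\,d\bar{x}$ for nonnegative measurable $h$. Applied to $|\varphi|\in L^1(G)$, this forces $\int_U |\varphi(ux)|\,du < \infty$ for a.e.\ $\bar{x}\in U\backslash G$, and since the exceptional set is $U$-invariant, also for a.e.\ $x\in G$. Setting $F(x):=\int_U \varphi(ux)\chi(u)\,du$ on this full-measure set, the substitution $u\mapsto uu_0^{-1}$ gives the transformation law $F(u_0 x)=\overline{\chi(u_0)}F(x)$, and a second application of Fubini yields $\|F\|_{U,1}\leq \|\varphi\|_{L^1(G)}<\infty$, so $F\in I^1(G,U,\overline{\chi})$.

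For part (ii), introduce the twisted Fourier coefficient operator $\psi\mapsto\widetilde{\psi}$ defined by $\widetilde{\psi}(x):=\int_U \psi(ux)\chi(u)\,du$ whenever this integral makes sense. The $K$-finiteness of $F$ is immediate: for $k\in K$, $(R(k)F)(x)=F(xk)=\widetilde{R(k)\varphi}(x)$, so the $K$-span of $F$ is the image of the finite-dimensional $K$-span of $\varphi$ under the linear map $\psi\mapsto\widetilde{\psi}$. The main obstacle will be the $\mathcal{Z}(\mathfrak{g}_\mathbb{C})$-finiteness: one would like to commute $R(z)$ with $\int_U$, but the partial derivatives of $\varphi$ are not obviously dominated uniformly along $U$-orbits, so naive differentiation under the integral is not available.

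My plan is to sidestep this difficulty via a convolution identity. By Lemma~\ref{cbr-4} applied to $\varphi$, choose $\alpha\in C_c^\infty(G)$ with $\varphi=\varphi\star\alpha$. The key identity to establish is
$$
\widetilde{\psi\star\beta}=\widetilde{\psi}\star\beta,\qquad \psi\in L^1(G),\ \beta\in C_c^\infty(G),
$$
proved by a double Fubini (absolute convergence being controlled by $|\psi|\star|\beta|\in L^1(G)$ together with part (i) applied to this function) followed by the substitution $y\mapsto uy$ in the inner $G$-integral. Applied with $\psi=\varphi$ and $\beta=\alpha$, this yields $F=F\star\alpha$; for $z\in U(\mathfrak{g}_\mathbb{C})$ the standard convolution identity $R(z)(f\star g)=f\star R(z)g$ then gives $R(z)F=F\star R(z)\alpha$, which is smooth and lies in $L^1(G)$ by Young's inequality, hence in $I^1(G,U,\overline{\chi})$. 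Applying the key identity again with $\beta=R(z)\alpha$, and using $\varphi\star R(z)\alpha=R(z)(\varphi\star\alpha)=R(z)\varphi$, produces the equivariance
$$
R(z)F=\widetilde{R(z)\varphi}.
$$
Fix an ideal $J\subset\mathcal{Z}(\mathfrak{g}_\mathbb{C})$ of finite codimension with $J\cdot\varphi=0$; for $z\in J$ we then have $R(z)\varphi=0$ and hence $R(z)F=\widetilde{R(z)\varphi}=0$, so $J$ annihilates $F$ in $I^1(G,U,\overline{\chi})$, establishing $\mathcal{Z}(\mathfrak{g}_\mathbb{C})$-finiteness.
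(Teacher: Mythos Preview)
Your proof is correct and follows essentially the same route as the paper. The paper packages your convolution identity $\widetilde{\psi\star\beta}=\widetilde{\psi}\star\beta$ into the abstract statement that $f\mapsto f^U$ is a continuous $G$-intertwiner $L^1(G)\to I^1(G,U,\overline{\chi})$ which carries the G\r{a}rding space to the G\r{a}rding space and is $(\mathfrak g,K)$-equivariant there (``as one easily check''); combined with Lemma~\ref{cbr-4} placing $\varphi$ in the G\r{a}rding space, this transfers the $\mathcal Z(\mathfrak g_{\mathbb C})$-- and $K$--finiteness in one stroke. Your argument simply unpacks that sentence: the identity $\widetilde{\psi\star\beta}=\widetilde{\psi}\star\beta$ is exactly the preservation of the G\r{a}rding structure, and your derivation of $R(z)F=\widetilde{R(z)\varphi}$ is the explicit $(\mathfrak g,K)$-equivariance.
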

\begin{proof} Let $f\in L^1(G)$. Then, 
$$
\int_{U\backslash G} \left(\int_U \left|f(ux) \right|du \right)dx=
\int_G \left|f(x) \right|dx<\infty.
$$
Thus, $f^U(x)\overset{def}{=}\int_U f(ux) \chi(u) du$ converges absolutely for almost all $x\in G$.  Obviously, we have 
$f^U\in I^1(G, U, \overline{\chi})$. Since also
$$
\int_{U\backslash G} \left|f^U(x)\right| dx\le 
\int_{U\backslash G} \left(\int_U \left|f(ux) \right|du \right)dx=
\int_G \left|f(x) \right|dx, 
$$
we obtain a continuous intertwining map $f\longmapsto f^U$ between  Banach representations 
of $G$: $L^1(G)\longrightarrow I^1(G, U, \overline{\chi})$ (given by right translations). The 
Garding space of $L^1(G)$, which by definition 
consists of functions $f\star \alpha$, $f\in L^1(G)$, $\alpha\in C_c^\infty(G)$, is mapped onto a (similarly defined) 
Garding space of $I^1(G, U, \overline{\chi})$. The restriction to the Garding space is $(\mathfrak g, K)$--equivariant map as 
one easily check. 

Above discussion shows (i) and $\varphi^U\in I^1(G, U, \overline{\chi})$. Also, Lemma \ref{cbr-4} shows that 
$\varphi$ is in the Garding space of  $L^1(G)$. Now, above discussion and Corollary \ref{cbr-5} imply (ii).
\end{proof} 

The existence of  $\cal Z(\mathfrak g_{\mathbb C})$--finite and
$K$--finite on the right  functions in $L^1(G)$ is standard and
explained in the proof of (\cite{MuicMathAnn}, Theorem 3-10).  They exists if and only if $G$ admits discrete series i.e., if
and only if $rank(G)=rank(K)$. Furthermore, if for example $U$ is the unipotent radical of a minimal parabolic subgroup of
$G$, then the integral $\int_U \varphi(ux) \chi(u) du$ converges absolutely for all $x\in G$
(\cite{W1}, Theorem 7.2.1) if for example $\varphi$ is a $K$--finite matrix coefficient of an integrable discrete series of $G$.

Where we use the following terms.  Let $(\pi, \mathcal H)$ be an irreducible unitary representation of $G$.
Let $(\ , \ )$ be the invariant inner product on $\mathcal H$. Let $\mathcal H_K$ be the space of $K$--finite inside
$\mathcal H$. A matrix coefficient of $\pi$ is a function on $G$ of the form $x\longmapsto (\pi(x)h, \ h')$, where
$h, h'\in\mathcal H$. Obviously, $\varphi\neq 0$ if and only if $h, h'\neq 0$. The matrix coefficient is $K$--finite on
the right (resp., on the left and on both sides)  if and only if $h\in \mathcal H_K$ (resp.,  $h'\in \mathcal H_K$ and
$h, h'\in \mathcal H_K$).

 \vskip .2in 
Let $\varphi$ be  any $K$--finite matrix coefficient of an integrable discrete series $\pi$. Then, we let 

\begin{equation}\label{FC-1}
\mathcal F_{(\chi, U)}^{loc}(\varphi)(x)\overset{def}{=} \int_U \varphi(ux)\overline{\chi(u)}du\ \  x\in G.
\end{equation}
By Lemma \ref{cps-5} (i), converges absolutely for almost all $x\in G$. By part (ii) of the same lemma and Corollary \ref{cbr-5},
is equal to a smooth function almost everywhere, therefore can be taken to be
a smooth. We assume that in what follows. The proof of Corollary \ref{cbr-5} show that it is
$\cal Z(\mathfrak g_{\mathbb C})$--finite and $K$--finite on the right. The meaning of (\ref{FC-1}) is contained in the following
result:

\begin{Cor} \label{gwm-7} Assume that  $\mathcal F_{(\chi, U)}^{loc}(\varphi)\neq 0$.  Then, there exists a closed
  irreducible  subscape of the Banach representation $I^1(G, U, \chi)$ which is infinitesimally equivalent to
  $(\pi, \mathcal H)$. 
\end{Cor}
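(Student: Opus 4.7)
The plan is to construct the closed irreducible subspace as the $I^1(G,U,\chi)$-closure of an explicit copy of the $(\mathfrak g,K)$-module $\mathcal H_K$, and then use the admissibility result of Lemma \ref{cbr-6} to pin down the Harish-Chandra module of this closure. The steps are: first, realize $\mathcal H_K$ inside $I^1(G,U,\chi)$ via the Fourier transform of matrix coefficients; second, identify this image with the $(\mathfrak g,K)$-module generated by $\psi:=\mathcal F_{(\chi,U)}^{loc}(\varphi)$; third, pass to the closure and show its $K$-isotypic components agree with those of the generating module.

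Writing $\varphi=\varphi_{h_0,h'_0}$ with $h_0,h'_0\in\mathcal H_K$, a change of variable inside $\int_U$ shows that $\mathcal F_{(\chi,U)}^{loc}$ commutes with right translation by $G$. Consequently
$$
\sigma:\mathcal H_K\longrightarrow I^1(G,U,\chi),\qquad h\longmapsto \mathcal F_{(\chi,U)}^{loc}(\varphi_{h,h'_0}),
$$
is a $(\mathfrak g,K)$-homomorphism, well defined by Lemma \ref{cps-5}. Since $\sigma(h_0)=\psi\neq 0$ and $\mathcal H_K$ is irreducible, $\sigma$ is injective. Set $M:=\sigma(\mathcal H_K)$; by Corollary \ref{cbr-5} it consists of smooth functions, and as a $(\mathfrak g,K)$-module it is irreducible and isomorphic to $\mathcal H_K$. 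If $N$ denotes the $(\mathfrak g,K)$-submodule of $I^1(G,U,\chi)$ generated by $\psi$, then Lemma \ref{cbr-6} makes $N$ admissible and of finite length; the inclusion $\psi\in M$ together with irreducibility of $M$ then forces $N=M\cong\mathcal H_K$.

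Let $V:=\overline{N}\subseteq I^1(G,U,\chi)$; this is the candidate closed irreducible subspace. The key technical point, and the step I expect to be the main obstacle, is showing that the $(\mathfrak g,K)$-module of $K$-finite vectors of $V$ equals $N$, i.e.\ $V_K=N$. The argument proceeds $K$-type by $K$-type: by admissibility each isotypic component $N^\delta$ is finite-dimensional, hence closed in the Banach space $I^1(G,U,\chi)$, and the continuous $K$-isotypic projector $p_\delta$ gives
$$
V^\delta = p_\delta(\overline{N})\subseteq \overline{p_\delta(N)}=\overline{N^\delta}=N^\delta,
$$
so $V^\delta=N^\delta$ and therefore $V_K=\bigoplus_\delta V^\delta=N$.

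Topological irreducibility of $V$ then follows formally: any non-zero closed $G$-invariant subspace $W\subseteq V$ contains a non-zero $K$-finite vector (by Peter-Weyl, in any continuous Banach representation of the compact group $K$ the $K$-finite vectors are dense), so $0\neq W_K\subseteq V_K=N$; irreducibility of $N$ forces $W_K=N$, and then $W\supseteq\overline{N}=V$. Thus $V$ is a closed irreducible subspace of $I^1(G,U,\chi)$ whose underlying $(\mathfrak g,K)$-module $V_K$ is isomorphic to $\mathcal H_K$, giving infinitesimal equivalence with $(\pi,\mathcal H)$.
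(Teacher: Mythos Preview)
Your argument is correct and follows the same route as the paper: the paper's proof is a one-line sketch that simply declares the map $h'\mapsto\bigl(x\mapsto\int_U(\pi(ux)h',h)\,\overline{\chi(u)}\,du\bigr)$ from $\mathcal H_K$ into $I^1(G,U,\chi)$ to be the required infinitesimal equivalence, appealing to the proof of Lemma~\ref{cps-5}. You have supplied the details the paper omits---the passage to the closure, the identification $V_K=N$ via admissibility and finite-dimensionality of isotypic components, and the deduction of topological irreducibility---all of which are sound.
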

\begin{proof} On can use the argument from the proof of Lemma \ref{cps-5}  to check that
  the map $\mathcal H_K\longrightarrow I^1(G, U, \chi)$ which maps $h'\in \mathcal H_K$ onto
  $x\longmapsto \int_U(\pi(ux)h', h) \overline{\chi(u)} du$ gives us the required infinitesimal equivalence.
\end{proof}
\section{On Certain Generic Representations of $G$}\label{gwm}

The goal of the present section is to put Corollary \ref{gwm-7} in a effective form exploring when
$(\chi, U)$--generic and integrable discrete series $\pi$ poses a $K$--finite matrix coefficient $\varphi$ such that
 $\mathcal F_{(\chi, U)}^{loc}(\varphi)\neq 0$. (See below for the definition of the terms used.) 

We start introducing some notation.
Let $\theta$ be the Cartan involution on $G$ with the fixed point equal to $K$. Let $\mathfrak g=\mathfrak k \oplus
\mathfrak p$ be the decomposition into $+1$ and $-1$--eigenspace of $d\theta$.  Let $\mathfrak a\subset \mathfrak p$ be the
maximal subspace subject to the condition that is Abelian Lie subalgebra. Let $A\subset G$ be the vector group with
Lie algebra $\mathfrak a$.

In this section we fix a minimal parabolic subgroup $P=MAN$ of $G$ in the usual way (see \cite{W1}, Section 2).
We have the Iwasawa decomposition $G=NAK$.

\vskip .2in

We recall the notion of a  norm on the group (see \cite{W1}, 2.A.2).  A norm $|| \ ||$ is a function
$G\longrightarrow [1, \infty[$ satisfying  the following properties:
    \begin{itemize}
    \item[(1)] $||x^{-1}||=||x||$, for all $x\in G$;
    \item[(2)]$||x\cdot y||\le ||x||\cdot || y||$, for all $x, y\in G$;
    \item[(3)] the sets $\left\{x\in G; \ \ ||x||\le r \right\}$ are compact  for all $r\ge 1$;
    \item[(4)] $||k_1\exp{(tX)} k_2||=||\exp{(X)}||^t$, for all $k_1, k_2\in K, X\in \mathfrak p, \ \ t\ge 0$.
    \end{itemize}
Any two norms $||\ ||_i$, $i=1,2$, are equivalent: there exist $C, r>0$ such that $||x||_1\le C ||x||^r_2$, for all $x\in G$.

Let $\Phi(\mathfrak g, \mathfrak a)$ be the set of all roots of $ \mathfrak a$ in $ \mathfrak g$. Let 
$\Phi^+(\mathfrak g, \mathfrak a)$ be the set of positive roots with respect to $\mathfrak n=Lie(N)$. Set

$$
\rho(H)= \frac12 \trace{\left(\text{ad}(H)|_{\mathfrak n}\right)}, \ \ H\in \mathfrak a.
$$
We set 
$$
m(\alpha)=\dim \ \mathfrak g_\alpha, \ \ \alpha\in \Phi^+(\mathfrak g, \mathfrak a).
$$
For $\mu\in \mathfrak a^\star$, we let 
$$
a^\mu = \exp{(\mu(H))}, \ \ a=\exp{(H)}.
$$
We define $A^+$ to be the set of all $a\in A$ such that $a^\alpha>1$ for all 
$\alpha\in \Phi^+(\mathfrak g, \mathfrak a)$.
Finally, we let
$$
D(a)=\prod_{\alpha\in \Phi^+(\mathfrak g, \mathfrak a)} \sinh{(\alpha(H))}^{m(\alpha)}, \ \  a=\exp{(H)}.
$$
Then, we may define a Haar measure on $G$ by the following formula:

$$
\int_G f(g) dg = \int_K \int_{A^+}\int_K D(a) f(k_1ak_2)dk_1 da dk_2, \ \  f\in C_c^\infty(G).
$$

\vskip .2in

Let $\left\{\alpha_1, \ldots, \alpha_r\right\}$ be the set of simple roots in
$ \Phi^+(\mathfrak g, \mathfrak a)$. Since $G$ is semisimple,
we have that this set spans $\mathfrak a^\star$. We define the dual basis $\left\{H_1, \ldots, H_r\right\}$ of $\mathfrak a$ 
in the standard way: $\alpha_i(H_j)=\delta_{ij}$. By (\cite{W1}, Lemma 2.A.2.3), there exists, $\mu, \eta\in  \mathfrak a^\star$
such that $\mu(H_i), \eta(H_j)>0$, for all $j$, and constants $C, D>0$ such that

$$
C a^\mu \le ||a||\le D a^\eta, \ \ a\in Cl(A^+).
$$

We remark that $\rho(H_j)>0$ for all $j$. So, we can find $c, d>0$ such that

$$
a^{c\rho} \le a^\mu  \le a^\eta \le a^{d\rho}, \ \ a\in Cl(A^+).
$$

We record this in the  next lemma:

\begin{Lem}\label{gwm-0} There exists real constants $c, C, d, D>0$ such that 
  $$
C a^{c\rho}  \le ||a||\le D a^{d\rho}, \ \ a\in Cl(A^+).
$$
\end{Lem}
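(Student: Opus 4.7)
My plan is to combine the already-cited Wallach inequality $Ca^\mu \le ||a|| \le Da^\eta$ on $Cl(A^+)$ with a purely combinatorial comparison of linear functionals on the closed positive Weyl chamber $Cl(\mathfrak a^+) = \{H \in \mathfrak a : \alpha_i(H) \ge 0,\ i=1,\ldots,r\}$. If I produce constants $c, d > 0$ with $c\rho(H) \le \mu(H)$ and $\eta(H) \le d\rho(H)$ for every $H \in Cl(\mathfrak a^+)$, then exponentiating and substituting into the Wallach bound immediately yield the lemma with the same constants $C, D$.

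The structural input I would use is that $Cl(\mathfrak a^+)$ is the non-negative cone generated by the dual basis $H_1, \ldots, H_r$, so every $H \in Cl(\mathfrak a^+)$ writes as $H = \sum_j \alpha_j(H)\, H_j$ with $\alpha_j(H) \ge 0$. On this cone all three functionals take strictly positive values on each generator: $\mu(H_j), \eta(H_j) > 0$ by the choice in Wallach, while
$$
\rho(H_j) = \tfrac12 \sum_{\alpha \in \Phi^+(\mathfrak g, \mathfrak a)} m(\alpha)\, \alpha(H_j) > 0,
$$
since every positive root $\alpha = \sum_i n_i \alpha_i$ satisfies $\alpha(H_j) = n_j \ge 0$ and the simple root $\alpha_j$ itself contributes a strictly positive term. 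I would then set
$$
c := \frac{\min_j \mu(H_j)}{\max_j \rho(H_j)}, \qquad d := \frac{\max_j \eta(H_j)}{\min_j \rho(H_j)},
$$
both well-defined and positive. For $H = \sum_j t_j H_j$ with $t_j \ge 0$, termwise estimates immediately give $c\rho(H) \le \mu(H)$ and $\eta(H) \le d\rho(H)$; exponentiating these produces $a^{c\rho} \le a^\mu$ and $a^\eta \le a^{d\rho}$ on $Cl(A^+)$, and substitution into Wallach's bound finishes the proof.

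There is no genuine obstacle here: the hard analytic content, namely the comparison of the group norm with an exponential in $\mathfrak a^*$, has already been extracted from Wallach's book, and what remains is a two-line inequality between linear functionals on a finitely generated cone. The only point requiring verification is the positivity $\rho(H_j) > 0$, which follows at once from the expansion of $\rho$ as a non-negative integer combination of simple roots in which the coefficient of $\alpha_j$ is strictly positive.
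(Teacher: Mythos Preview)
Your proposal is correct and follows essentially the same approach as the paper: the paper cites Wallach's bound $Ca^\mu \le ||a|| \le Da^\eta$ with $\mu(H_j),\eta(H_j)>0$, observes that $\rho(H_j)>0$ for all $j$, and then asserts the existence of $c,d>0$ with $a^{c\rho}\le a^\mu\le a^\eta\le a^{d\rho}$ on $Cl(A^+)$ without spelling out the choice of $c,d$. Your argument simply makes this last step explicit by writing down concrete formulas for $c$ and $d$ and verifying the termwise inequality on the cone generated by the $H_j$.
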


\vskip .2in

Let $(\pi, \cal H)$ be a unitary irreducible representation of $G$  acting on the Hilbert space $\cal H$.
 We write $\langle \ , \ \rangle$ for the invariant  inner product on $\cal H$. We denote by $\cal H^\infty$ 
the subspace of smooth vectors in 
$\cal H$. It is a complete Fr\' echet space under the family of semi--norms:

$$
||h||_T =||d\pi(T)h||, \ \ T\in \cal U(\mathfrak g_{\mathbb C}),
$$
where $||\ ||$  is the norm on $\cal H$.

Let $U\subset G$ be a closed  subgroup. Let $\chi: U \longrightarrow \mathbb C^\times$ be a unitary character.
Consider a continuous functional $\lambda: \ \cal H^\infty\longrightarrow \mathbb C$
which satisfies 

\begin{equation}\label{gwm-00}
\lambda(\pi(u)h)=\chi(u)\lambda(h), \ \  u\in U, \ h\in \mathcal H^\infty.
\end{equation}
The fact that $\lambda$ is continuous means that there exists a constant $C>0$, and $T_1, \ldots, T_k \in
\cal U(\mathfrak g_{\mathbb C})$ such that 
$$
\left|\lambda(h)\right|\le C \left(\sum_{k=1}^l ||h||_{T_k}\right), \ \ h\in \mathcal H^\infty.
$$
By (\cite{W2}, Lemma 11.5.1), since $|\lambda|$ is a continuous semi--norm on $\cal H^\infty$, there exist
$d\in \mathbb R$ and a continuous semi--norm $\kappa$ on $H^\infty$ such that 

\begin{equation}\label{gwm-000}
|\lambda(\pi(g)h)|\le ||g||^d \kappa(h), \ \ g\in G, \ h\in \mathcal H^\infty.
\end{equation}

We define

\begin{equation}\label{gwm-0000}
d_{\pi, \lambda}=d_{\pi, \chi, U, \lambda}
\end{equation}
to be the infimum of all  $d\in \mathbb R$  for which there exist a continuous
semi--norm $\kappa$ on $\mathcal H^\infty$ such that (\ref{gwm-000}) holds. We remark that for each $d> d_{\pi, \lambda}$
there exists a continuous semi--norm $\kappa$ on $\mathcal H^\infty$ such that (\ref{gwm-000}) holds. Obviously, the function
$g\longmapsto \lambda(\pi(g)h)$ (for fixed $h\in \mathcal H_K$, $\neq 0$) is bounded provided that
$d_{\pi, \lambda}< 0$. 

\vskip .2in
We define the notion of generic representations that we are interested in. It generalizes the usual notion of
generic representation (i.e., the one having a Whittaker model) for quasi--split G \cite{kostant}. In all cases that we consider
$U$ is (conjugate) to a closed subgroup of $N$ (see Section \ref{app}).

\begin{Def}\label{gwm-1001} Let $U\subset G$ be a closed  subgroup and 
  $\chi: U \longrightarrow \mathbb C^\times$ be a unitary character. Let $(\pi, \cal H)$ be an
  unitary irreducible representation of $G$  acting on the Hilbert space
  $\cal H$. We say that  $(\pi, \cal H)$ is $(\chi, U)$--generic if there exists  a continuous functional
  $\lambda: \ \cal H^\infty\longrightarrow \mathbb C$ which satisfies
  $\lambda(\pi(u)h)=\chi(u)\lambda(h)$,  $u\in U$, $h\in \mathcal H^\infty$. 
\end{Def}

\vskip .2in
Now, we assume that $G$ admits discrete series and let
$\pi$ be an  integrable discrete series.  A complete characterization of them can be found in
\cite{milicic} (see also Section \ref{integrable}). Let $h\in \mathcal H_K$, $h\neq 0$. Consider the matrix coefficient given by
\begin{equation}\label{gwm-01}
\varphi(x)=(\pi(x)h, h).
\end{equation}
Since  $\pi$ is integrable and $\varphi$ is $K$--finite,
we have $\varphi\in L^1(G)$. This implies  that its complex conjugate
$\overline{\varphi}$ acts on any unitary representation of $G$.
In particular, it acts on $\pi$:
$$
\pi(\overline{\varphi})h{''}=\int_G \overline{\varphi(x)} \pi(x)h{''} dx, \ \ h{''}\in \mathcal H.
$$
We compute this action in the following standard and well--known  lemma. We have learned this lemma from
Savin which attributes the result to Mili\v ci\' c.

\begin{Lem}\label{gwm-1} (Mili\v ci\' c)
  Assume that $\pi$ is integrable discrete series. Let $d(\pi)>0$ be its formal degree. Then,
  $\pi(\overline{\varphi})$ is a rank-one operator given by
$$
\pi(\overline{\varphi})h''=\frac{1}{d(\pi)} (h'', h) h, \ \ h''\in \cal H.
$$
\end{Lem}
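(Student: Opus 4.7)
The plan is a direct application of Schur's orthogonality relations for discrete series, with integrability used only to guarantee that everything converges absolutely. Concretely, I would show the identity by pairing $\pi(\overline{\varphi})h''$ against an arbitrary $h''' \in \mathcal{H}$ and applying Schur orthogonality to the resulting integral of two matrix coefficients.

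First, since $\pi$ is integrable and $\varphi = (\pi(\cdot)h, h)$ is $K$-finite (both on the left and right), we have $\varphi \in L^1(G)$. Hence the integral defining $\pi(\overline{\varphi})h''$ converges as a Bochner integral, and for any $h''' \in \mathcal{H}$,
\begin{equation*}
(\pi(\overline{\varphi})h'', h''') \;=\; \int_G \overline{(\pi(x)h, h)}\,(\pi(x)h'', h''')\,dx,
\end{equation*}
with absolute convergence since $|(\pi(x)h'', h''')| \le \|h''\|\,\|h'''\|$ and $\varphi \in L^1(G)$. Next, I would invoke the Schur orthogonality relations for the discrete series $\pi$: for any $u, v, u', v' \in \mathcal{H}$,
\begin{equation*}
\int_G (\pi(x)u, v)\,\overline{(\pi(x)u', v')}\,dx \;=\; \frac{1}{d(\pi)}\,(u, u')\,\overline{(v, v')}.
\end{equation*}
Applying this with $u = h''$, $v = h'''$, $u' = v' = h$ gives
\begin{equation*}
(\pi(\overline{\varphi})h'', h''') \;=\; \frac{1}{d(\pi)}\,(h'', h)\,\overline{(h''', h)} \;=\; \frac{1}{d(\pi)}\,(h'', h)\,(h, h''') \;=\; \left(\tfrac{1}{d(\pi)}(h'', h)\,h,\; h'''\right).
\end{equation*}
Since $h''' \in \mathcal{H}$ was arbitrary, the stated formula follows, and the rank-one conclusion is immediate from the factored form.

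There is no genuine obstacle here; the only point that requires care is checking absolute convergence of the integrals, and this is precisely what the integrability hypothesis on $\pi$ (together with $K$-finiteness of $h$, which places $\varphi$ in $L^1(G)$) is designed to provide. The Schur orthogonality relation in the form above is classical for integrable (indeed, for all square-integrable) discrete series on a semisimple Lie group with finite center, so no further analytic input is needed.
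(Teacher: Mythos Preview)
Your proof is correct and follows essentially the same route as the paper: pair $\pi(\overline{\varphi})h''$ against an arbitrary vector, apply the Schur orthogonality relations, and read off the rank-one formula. The only difference is cosmetic---you spell out the convergence justification a bit more explicitly than the paper does.
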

\begin{proof} The Schur orthogonality relation implies 
$$
\int_G (\pi(x)h, h) (h', \pi(x)h'') dx=\frac{1}{d(\pi)} (h, h'')(h', h), \ \ h', h''\in \cal H.
$$
Written differently
$$
(\pi(\overline{\varphi})h'', h')= \frac{1}{d(\pi)} (h'', h)(h, h').
$$
Equivalently
$$
\pi(\overline{\varphi})h''=\frac{1}{d(\pi)} (h'', h)h, \ \ h''\in \cal H.
$$
\end{proof}

\vskip .2in

We continue with a simple lemma:

\begin{Lem}\label{gwm-2} Let $\psi \in L^1(G)\cap C^\infty(G)$ be the  $\cal Z(\mathfrak g_{\mathbb C})$--finite and
  $K$--finite on the left function. Then, we have
  the following:
  \begin{itemize}
  \item[(i)] $\pi(\psi)\mathcal H\subset \mathcal H^\infty$.
  \item[(ii)] There exists a sequence $\psi_n \in C_c^\infty(G)$, $n\ge 1$, such that
    $l(T)\psi_n\overset{L^1}{\longrightarrow}
    l(T)\psi$, for all $T\in \mathcal U(\mathfrak g_{\mathbb C})$, where $l$ denotes the action derived from the action of
    $G$ by left translations.
  \end{itemize}
  \end{Lem}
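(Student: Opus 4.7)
The plan is to derive both parts from a single identity: $\psi=\alpha*\psi$ for some $\alpha\in C_c^\infty(G)$, where I use the convolution $(\alpha*f)(x)=\int_G\alpha(y)f(y^{-1}x)\,dy$. This is the left-sided analogue of Lemma \ref{cbr-4} and one obtains it by applying Lemma \ref{cbr-4} to $\psi^\vee(x)=\psi(x^{-1})$, which is $C^\infty$, $\mathcal Z(\mathfrak g_{\mathbb C})$--finite, and $K$--finite on the right; the resulting identity $\psi^\vee=\psi^\vee\star\beta$ is then converted back to one for $\psi$ by the substitution $y\mapsto y^{-1}$ together with unimodularity of $G$.

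For (i), fix $h\in\mathcal H$. Using Fubini (legitimate since $\alpha\in L^\infty$ has compact support and $\psi\in L^1$) and the substitution $g\mapsto yg$ in the inner integral,
$$
\pi(\psi)h=\pi(\alpha*\psi)h=\int_G\alpha(y)\left(\int_G\psi(y^{-1}g)\pi(g)h\,dg\right)dy=\int_G\alpha(y)\pi(y)\pi(\psi)h\,dy=\pi(\alpha)\pi(\psi)h.
$$
Since $\alpha\in C_c^\infty(G)$, the standard G\aa rding argument (one differentiates $t\mapsto\pi(\exp(-tX))\pi(\alpha)v=\pi(l(X)\alpha_t)v$ under the integral sign, permitted by compact support of $\alpha$) shows $\pi(\alpha)\mathcal H\subset\mathcal H^\infty$, hence $\pi(\psi)h\in\mathcal H^\infty$.

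For (ii), pick a compact exhaustion $K_1\subset K_2\subset\cdots$ of $G$ and set $\eta_n=\mathbf 1_{K_n}$. Define $\psi_n=\alpha*(\eta_n\psi)$. Then $\supp(\psi_n)\subset\supp(\alpha)\cdot K_n$ is compact, while smoothness follows from $(\alpha*f)(x)=\int\alpha(xz^{-1})f(z)\,dz$ together with differentiation under the integral (justified by compact support of $\alpha$); thus $\psi_n\in C_c^\infty(G)$. The same differentiation yields $l(T)(\alpha*f)=(l(T)\alpha)*f$ for every $T\in\mathcal U(\mathfrak g_{\mathbb C})$ and every $f\in L^1(G)$. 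Applying this with $f=\psi$ and $f=\eta_n\psi$, Young's convolution inequality gives
$$
\|l(T)\psi_n-l(T)\psi\|_{L^1}\le\|l(T)\alpha\|_{L^1}\cdot\|\eta_n\psi-\psi\|_{L^1},
$$
and the right-hand side tends to $0$ by dominated convergence, since $|\eta_n\psi|\le|\psi|\in L^1(G)$ and $\eta_n\to 1$ pointwise.

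The main obstacle is securing the convolution identity $\psi=\alpha*\psi$; this requires Harish-Chandra's theorem in its left-handed form, which is not quite the statement of Lemma \ref{cbr-4} but is a direct consequence. After that is in place, (i) is the usual G\aa rding fact about $C_c^\infty$ acting into smooth vectors, and (ii) is an elementary truncate--then--smooth argument controlled by Young's inequality.
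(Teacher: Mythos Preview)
Your proof is correct and follows essentially the same approach as the paper: both use the left--handed Harish--Chandra identity $\psi=\alpha\star\psi$ with $\alpha\in C_c^\infty(G)$ to get $\pi(\psi)=\pi(\alpha)\pi(\psi)$ for (i), and the relation $l(T)(\alpha\star f)=(l(T)\alpha)\star f$ together with Young's inequality for (ii). The only cosmetic difference is that the paper approximates $\psi$ in $L^1$ by an arbitrary sequence $\eta_n\in C_c^\infty(G)$ and sets $\psi_n=\alpha\star\eta_n$, whereas you truncate $\psi$ by characteristic functions and set $\psi_n=\alpha\ast(\mathbf 1_{K_n}\psi)$; both choices feed the same Young estimate.
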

\begin{proof} By the assumption on $\psi$, there exists $\alpha\in C_c^\infty(G)$ such that $\psi=\alpha\star \psi$
  (see \cite{hc}, Section 8, Theorem 1). Then, $\pi(\psi)=\pi(\alpha)\pi(\psi)$. So, we have
    $$
    \pi(\psi)\mathcal H =\pi(\alpha)\pi(\psi)\mathcal H\subset \pi(\alpha)\mathcal H\subset \mathcal H^\infty.
    $$
    This proves (i). By the standard measure theory, there exists a sequence $\eta_n \in C_c^\infty(G)$, $n\ge 1$, such that
    $\eta_n\overset{L^1}{\longrightarrow}\psi$. Then, for any $\beta\in  C_c^\infty(G)$, $\beta\star
    \eta_n\overset{L^1}{\longrightarrow}\beta \star \psi$. Also, for  $\beta\in  C_c^\infty(G)$ and $\eta\in L^1(G)$, the function
    $\beta\star \eta$ is in $C^\infty(G)$. Moreover, for  $T\in \mathcal U(\mathfrak g_{\mathbb C})$, we have by direct computation
    $$
    l(T)\left(\beta\star \eta\right)= (l(T)\beta)\star \eta.
    $$
    Finally, the sequence in (ii) can be taken to be  $\psi_n=\alpha\star \eta_n$, $n\ge 1$, as the reader can
    easily check. 
    \end{proof}

\vskip .2in 
 
  \begin{Lem}\label{gwm-3} Assume that $\pi$ is integrable and $(\chi, U)$--generic. Assume that there exists 
    $\lambda: \ \cal H^\infty\longrightarrow \mathbb C$ satisfying (\ref{gwm-00}) such that for some 
     $h\in \mathcal H_K$  we have 
    $$
\lambda(h)\neq 0, \ \ \text{and,} \ \ \int_G \left|\overline{\varphi(x)} \lambda(\pi(x)h)\right|dx <\infty,
   $$
$\varphi$ is given by (\ref{gwm-01}). 
    Then, $\mathcal F_{(\chi, U)}^{loc}(\varphi)\neq 0$. 
\end{Lem}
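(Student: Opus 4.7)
The strategy is to combine Mili\v{c}i\'c's formula for $\pi(\overline{\varphi})h$ from Lemma \ref{gwm-1} with an approximation of $\overline{\varphi}$ by compactly supported smooth functions, in order to move $\lambda$ inside the integral defining $\pi(\overline{\varphi})h$.

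By Lemma \ref{gwm-1}, $\pi(\overline{\varphi})h = d(\pi)^{-1}(h,h)\,h$ is a non-zero multiple of $h \in \mathcal{H}_K \subset \mathcal{H}^\infty$; applying the continuous functional $\lambda$ and using $\lambda(h) \neq 0$ gives
\[
\lambda(\pi(\overline{\varphi})h) \;=\; \frac{(h,h)}{d(\pi)}\,\lambda(h) \;\neq\; 0.
\]

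The main analytic step is to establish the identity
\[
\lambda(\pi(\overline{\varphi})h) \;=\; \int_G \overline{\varphi(x)}\,\lambda(\pi(x)h)\,dx.
\]
Note that $\overline{\varphi} \in L^1(G)\cap C^\infty(G)$ is $\mathcal{Z}(\mathfrak{g}_{\mathbb{C}})$--finite and $K$--finite on the left, since $(l(k)\overline{\varphi})(x) = (\pi(k)h,\pi(x)h)$ takes values in the finite-dimensional $K$--type of $h$. Lemma \ref{gwm-2}(ii) then yields $\psi_n \in C_c^\infty(G)$ with $l(T)\psi_n \to l(T)\overline{\varphi}$ in $L^1(G)$ for every $T \in \mathcal{U}(\mathfrak{g}_{\mathbb{C}})$. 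Combining the estimate $\|\pi(f)v\| \le \|f\|_{L^1}\|v\|$ with the identity $d\pi(T)\pi(f) = \pi(l(T)f)$, one obtains $\pi(\psi_n)h \to \pi(\overline{\varphi})h$ in the Fr\'echet topology of $\mathcal{H}^\infty$; the continuity of $\lambda$ gives $\lambda(\pi(\psi_n)h) \to \lambda(\pi(\overline{\varphi})h)$. For each $\psi_n \in C_c^\infty(G)$, continuity of $x\mapsto \pi(x)h$ into $\mathcal{H}^\infty$ on the compact support of $\psi_n$ makes the vector-valued integral $\int_G \psi_n(x)\pi(x)h\,dx$ converge in $\mathcal{H}^\infty$, yielding the scalar identity $\lambda(\pi(\psi_n)h) = \int_G \psi_n(x)\lambda(\pi(x)h)\,dx$. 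Taking in particular $\psi_n = \alpha \star (\chi_n \overline{\varphi})$, where $\alpha \in C_c^\infty(G)$ satisfies $\overline{\varphi} = \alpha \star \overline{\varphi}$ (Lemma \ref{cbr-4}) and $\chi_n \in C_c^\infty(G)$ is an increasing cut-off exhausting $G$, Fubini together with dominated convergence—controlled by the absolute integrability hypothesis $\int_G |\overline{\varphi(x)}\lambda(\pi(x)h)|\,dx < \infty$ and the polynomial growth bound \eqref{gwm-000} applied on the compact support of $\alpha$—gives $\int_G \psi_n(x)\lambda(\pi(x)h)\,dx \to \int_G \overline{\varphi(x)}\lambda(\pi(x)h)\,dx$, completing the interchange.

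Finally, the absolute integrability hypothesis permits a further application of Fubini with respect to the factorization $dg = du\,d\bar{g}$ on $U \backslash G$; using $\lambda(\pi(ux)h) = \chi(u)\lambda(\pi(x)h)$ and the definition \eqref{FC-1},
\[
\int_G \overline{\varphi(x)}\,\lambda(\pi(x)h)\,dx \;=\; \int_{U\backslash G} \overline{\mathcal{F}_{(\chi,U)}^{loc}(\varphi)(x)}\,\lambda(\pi(x)h)\,d\bar{x}.
\]
Since the left-hand side is non-zero by the previous steps, so is the right-hand side, and therefore $\mathcal{F}_{(\chi,U)}^{loc}(\varphi) \not\equiv 0$.

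The principal obstacle is the interchange of $\lambda$ with the integral: because $\lambda$ is continuous only for the Fr\'echet topology of $\mathcal{H}^\infty$, bare $L^1$-convergence $\psi_n \to \overline{\varphi}$ does not suffice. One must simultaneously arrange (i) $L^1$-convergence of every left derivative $l(T)\psi_n$ (to force convergence of $\pi(\psi_n)h$ in $\mathcal{H}^\infty$), and (ii) scalar convergence against the \emph{unbounded} function $\lambda(\pi(\cdot)h)$, which has polynomial growth by \eqref{gwm-000}. The convolution construction $\psi_n = \alpha \star (\chi_n\overline{\varphi})$ accommodates both simultaneously, and the absolute integrability hypothesis is exactly what is needed to justify the dominated convergence on the scalar side.
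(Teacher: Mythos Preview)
Your proof follows essentially the same route as the paper's: apply Lemma~\ref{gwm-1} to get $\lambda(\pi(\overline{\varphi})h)\neq 0$, use the approximating sequence from Lemma~\ref{gwm-2} to pass $\lambda$ through the integral via Fr\'echet convergence of $\pi(\psi_n)h$, and then apply Fubini over $U\backslash G$ using the transformation law~\eqref{gwm-00}. The paper's argument is slightly less explicit than yours at the interchange step: it simply asserts that $\psi_n\overset{L^1}{\to}\psi$ gives $\psi_n\to\psi$ a.e.\ (strictly, only along a subsequence) and then invokes dominated convergence with the absolutely convergent integral $\int_G|\psi(x)\lambda(\pi(x)h)|\,dx$ as justification, without naming a dominating function for $|\psi_n(x)\lambda(\pi(x)h)|$.

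Your choice of the concrete sequence $\psi_n=\alpha\star(\chi_n\overline{\varphi})$ is an attempt to repair this, and it is the right instinct, but as written the domination is not quite complete either: bounding $|\psi_n|\le |\alpha|\star|\overline{\varphi}|$ leads after Fubini to $\int_G|\overline{\varphi}(x)||\lambda(\pi(yx)h)|\,dx$ for $y\in\mathrm{supp}(\alpha)$, and the hypothesis only controls this for $y=1$. The polynomial bound~\eqref{gwm-000} gives $|\lambda(\pi(yx)h)|\le C_y\|x\|^{d}\kappa(h)$, which is not directly comparable to $|\lambda(\pi(x)h)|$. One clean fix is to observe that the hypothesis of the lemma is actually used in the paper (Theorem~\ref{gwm-5}, Corollary~\ref{gwm-6}) only in situations where one has the stronger bound $\int_G|\varphi(x)|\,\|x\|^{d}\,dx<\infty$ for the relevant $d$, and under that stronger hypothesis your domination goes through immediately; alternatively, one can bypass the issue by noting that $\pi(\chi_n\overline{\varphi})h\to\pi(\overline{\varphi})h$ already in $\mathcal H$, hence $\pi(\alpha)\pi(\chi_n\overline{\varphi})h\to\pi(\alpha)\pi(\overline{\varphi})h=\pi(\overline{\varphi})h$ in $\mathcal H^\infty$, while on the scalar side $\int_G\chi_n(x)\overline{\varphi}(x)\lambda(\pi(x)h)\,dx\to\int_G\overline{\varphi}(x)\lambda(\pi(x)h)\,dx$ by dominated convergence with dominator $|\overline{\varphi}(x)\lambda(\pi(x)h)|$, and then one links the two via $\lambda(\pi(\alpha)v)=\int\alpha(y)\lambda(\pi(y)v)\,dy$ for $v\in\mathcal H^\infty$. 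In any case, the architecture of your argument matches the paper's.
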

\begin{proof} We let
  $$
  \psi=\frac{d(\pi)}{(h, h)}\overline{\varphi}.
  $$ 
Then, by Lemma \ref{gwm-1}, we have
    $$
    \pi(\psi)h=h.
    $$
    Also,  $\psi$ satisfies the assumptions of Lemma \ref{gwm-2}, let us fix a sequence  $\psi_n \in C_c^\infty(G)$, $n\ge 1$,
    as in (ii) of that lemma. Then the sequence $\pi(\psi_n)h$, $n\ge 1$, converges to  $\pi(\psi)h=h$ in the Fr\' echet space
    $\mathcal H^\infty$ (by Lemma \ref{gwm-2} (ii)). Since, $\lambda$ is continuous we obtain the following:
    $$
    \lambda(h)=\lambda(\pi(\psi)h)=\lambda\left(\lim_n \pi(\psi_n)h\right)=\lim_n
    \lambda\left( \pi(\psi_n)h\right)=\lim_n \int_G \psi_n(x) \lambda(\pi(x)h)dx,
    $$
    where in the last step we have used the fact that the action of $C_c^\infty(G)$ on Hilbert space $\mathcal H$ and locally
    convex space $\mathcal H^\infty$ coincide which is easy to see by acting with continuous linear functional on
    $\mathcal H$ which are also continuous functionals on $\mathcal H^\infty$.  So, we get 

    $$
    \lim_n \int_G \psi_n(x) \lambda(\pi(x)h)dx=\lambda(h)\neq 0.
    $$
     
    We would like to interchange the limit and the integral. For this, we recall that Lemma \ref{gwm-2} (ii) implies that
    $\psi_n\overset{L^1}{\longrightarrow} \psi$. This implies that $\psi =(a.e.) \lim_n \psi_n$. Also, by our assumption, 
    the integral $\int_G \psi(x) \lambda(\pi(x)h)dx$ converges absolutely, and, therefore, we are  able
    to apply the Dominated convergence theorem.  We obtain
    $$
    \int_G \psi(x) \lambda(\pi(x)h)dx=\lambda(h)\neq 0.
    $$
    Combining with (\ref{gwm-00}),  again because of absolute convergence of the integral, this  implies
    $$
    \int_{U\backslash G} \left(\int_U\psi(ux)\chi(u)du\right) \lambda(\pi(x)h)dx=
     \int_G \psi(x) \lambda(\pi(x)h)dx=\lambda(h)\neq 0.
    $$
     Hence
     $$
     \int_U\psi(ux)\chi(u)du\neq 0, \ \ \text{(a.e.) $x\in G$}.
     $$
\end{proof}

\vskip .2in
We are left studying the absolute convergence of the integral  $\int_G \left|\overline{\varphi(x)} \lambda(\pi(ux)h)\right|dx$,
where $\varphi(x)=(\pi(x)h, h)$ with $h\in\mathcal H_k$, $h\neq 0$  as before. The function $\varphi$ is not just $L^1$ but by 
Mili\v ci\' c \cite{milicic} it satisfies the following
estimate:

\begin{Lem}\label{gwm-4} There exists  $\epsilon_\pi >0$ (see Definition \ref{integrable-2})
  which depends on equivalence class of $\pi$ only  such that  for each
    $\epsilon\in \ ]0, \ \epsilon_\pi]$ there exist $M>0$ and $k>0$ such that 
$$
  |\varphi(k_1ak_2)|\le M   a^{-(2+\epsilon)\rho}  \left(1+ \log{||a||}\right)^{k}, \ \
  k_1, k_2\in K, \ a\in Cl(A^+).
  $$
\end{Lem}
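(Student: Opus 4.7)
The strategy is to invoke Harish-Chandra's asymptotic expansion of $K$-finite matrix coefficients of representations with fixed infinitesimal character, sharpened for discrete series by Miličić, and to extract from the integrability hypothesis a uniform strict decay rate $\epsilon_\pi > 0$ playing the role of the constant in Definition \ref{integrable-2}.

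First, I would reduce to a bounded family of matrix coefficients. Since $h \in \mathcal{H}_K$, as $k_1, k_2$ range over $K$ the translates $\pi(k_1^{-1})h$ and $\pi(k_2)h$ lie in a single finite-dimensional $K$-invariant subspace $V \subset \mathcal{H}_K$, and $\varphi(k_1 a k_2) = (\pi(a)\pi(k_2)h, \pi(k_1^{-1})h)$. By compactness of $K$ and continuity, it suffices to bound $|(\pi(a)v, w)|$ uniformly for $v, w$ in the unit ball of $V$.

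Second, Harish-Chandra's expansion theorem produces a finite set $E_\pi \subset \mathfrak{a}_{\mathbb{C}}^*$ of exponents, determined by the infinitesimal character of $\pi$, and an integer $k$ such that on $\mathrm{Cl}(A^+)$
$$(\pi(a)v, w) = \sum_{\mu \in E_\pi} p_{\mu,v,w}(\log a)\, a^{\mu - \rho},$$
with $p_{\mu,v,w}$ a polynomial of degree at most $k$ whose coefficients depend continuously on $v, w$. Square-integrability of $\pi$ gives $\mathrm{Re}(\mu)(H) < 0$ on the interior positive Weyl chamber for each $\mu \in E_\pi$ (Casselman's criterion). The stronger integrability $\varphi \in L^1(G)$, combined with the asymptotics $D(a) \sim a^{2\rho}$ of the Jacobian on $A^+$, forces $\mathrm{Re}(\mu)(H) + \rho(H) < 0$ there. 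Setting $\epsilon_\pi$ to be the largest positive constant such that $\mathrm{Re}(\mu) + (1+\epsilon_\pi)\rho \le 0$ holds for every $\mu \in E_\pi$ on the closed chamber then matches Definition \ref{integrable-2}.

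Third, for any $\epsilon \in ]0, \epsilon_\pi]$ and $a \in \mathrm{Cl}(A^+)$ we have $|a^{\mu - \rho}| = a^{\mathrm{Re}(\mu) - \rho} \le a^{-(2+\epsilon)\rho}$, while the polynomial factor obeys $|p_{\mu,v,w}(\log a)| \le M_0(1 + \|\log a\|)^k$ uniformly in $v, w$ in the unit ball of $V$. Lemma \ref{gwm-0} implies $\|\log a\| \le C(1 + \log\|a\|)$ on $\mathrm{Cl}(A^+)$, so $(1+\|\log a\|)^k$ is dominated by a constant times $(1+\log\|a\|)^k$. Summing over the finite set $E_\pi$ and taking the supremum over $v, w$ yields the claimed bound.

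The main obstacle is the uniform validity of the asymptotic expansion up to and including the walls of $A^+$, rather than only deep inside the open chamber where the standard Harish-Chandra expansion converges absolutely. This is precisely the delicate point addressed in Miličić's work \cite{milicic}, where the expansion is controlled globally on $\mathrm{Cl}(A^+)$ through the system of differential equations satisfied by matrix coefficients, with the behavior near walls handled by descent to matrix coefficients of appropriate Levi subgroups.
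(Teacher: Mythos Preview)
Your argument is conceptually sound but takes a genuinely different route from the paper. The paper's proof is a two-line deduction: it quotes directly Mili\v ci\'c's main theorem from \cite{milicic}, which gives the global bound $|\varphi(x)|\le M\,\Xi(x)^{2+\epsilon}(1+\log\|x\|)^k$ in terms of the Harish--Chandra spherical function $\Xi$, and then feeds in the classical estimate $\Xi(a)\le E\,a^{-\rho}(1+\log\|a\|)^l$ on $\mathrm{Cl}(A^+)$ (\cite{W1}, Theorem 4.5.3). No asymptotic expansions appear explicitly; the heavy lifting is entirely encapsulated in the citation.

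What you do instead is unpack the mechanism behind Mili\v ci\'c's theorem: the asymptotic expansion, the exponent analysis forced by $L^1$-integrability, and the descent near the walls. This is more informative but also more exposed. One point deserves care: your proposed definition of $\epsilon_\pi$ as the largest constant with $\mathrm{Re}(\mu)+(1+\epsilon_\pi)\rho\le 0$ for all leading exponents $\mu$ is \emph{not} obviously the same number as the $\epsilon_\pi$ of Definition~\ref{integrable-2}, which is given purely root-theoretically as $\min_{\alpha\ \text{noncompact}}|(\lambda|\alpha)|/\kappa(\alpha)$. That these agree (or at least that the root-theoretic quantity controls the exponents in the way you need) is precisely the content of Mili\v ci\'c's paper, so asserting the match without further argument is circular. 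Either cite \cite{milicic} for that identification, or---simpler, and what the paper does---cite the $\Xi^{2+\epsilon}$ bound as a black box and be done.
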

\begin{proof} By (\cite{milicic}, the theorem in the introduction),  for each
  $\epsilon\in \ ]0, \ \epsilon_\pi]$  there exist $M>0$ and $k>0$ such that 
$$
  |\varphi(x)|\le M \cdot \Xi(x)^{2+\epsilon} \left(1+ \log{||x||}\right))^k,
$$
where $\Xi$ is the zonal spherical function (\cite{W1}, 3.6). We recall (\cite{W1}, Theorem 4.5.3) the following
classical estimate:

$$
a^{-\rho} \le \Xi(a)\le E \cdot a^{-\rho}\left(1+ \log{||a||}\right)^l, \ \  a\in Cl(A^+),
$$
for some constants $E, l>0$. Thus, we obtain 

$$
  |\varphi(k_1ak_2)|\le M E^{2+\epsilon}  a^{-(2+\epsilon)\rho}  \left(1+ \log{||a||}\right)^{k+(2+\epsilon)l}, \ \
  k_1, k_2\in K, \ a\in Cl(A^+).
  $$
  This obviously proves the claim of the lemma.
\end{proof}

\vskip .2in
Now, we prove the following result:

\begin{Thm}\label{gwm-5} Assume that $\pi$ is integrable.
  Let   $\lambda: \ \cal H^\infty\longrightarrow \mathbb C$  be a non--zero continuous linear functional
  such that (\ref{gwm-00}) holds. Assume that $d_{\pi, \lambda}<\epsilon_\pi/d$, where constant $d>0$ is given by
  Lemma \ref{gwm-0}. Then, there exists a non--zero $K$--finite matrix
  coefficient $\varphi$ of $\pi$ such that $\mathcal F_{(\chi, U)}^{loc}(\varphi)\neq 0$
\end{Thm}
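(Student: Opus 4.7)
The strategy is to verify the hypotheses of Lemma~\ref{gwm-3} for a judiciously chosen $K$--finite matrix coefficient. Since $\lambda$ is continuous and non--zero while $\mathcal H_K$ is dense in $\mathcal H^\infty$, I pick $h\in \mathcal H_K$ with $\lambda(h)\neq 0$ and set $\varphi(x)=(\pi(x)h,h)$. By Lemma~\ref{gwm-3} it suffices to establish
$$
\int_G \left|\varphi(x)\,\lambda(\pi(x)h)\right|\, dx < \infty.
$$

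To handle this integral I would apply the polar integration formula
$\int_G f\,dg = \int_K\int_{A^+}\int_K D(a) f(k_1ak_2)\,dk_1\,da\,dk_2$
and bound the three factors $D(a)$, $|\varphi(k_1 a k_2)|$, $|\lambda(\pi(k_1 a k_2)h)|$ separately on $K\times A^+\times K$. For the first, $\sinh t\le e^t/2$ together with $\sum_\alpha m(\alpha)\alpha = 2\rho$ gives $D(a)\le a^{2\rho}$ on $A^+$. For the second, Lemma~\ref{gwm-4} supplies, for any prescribed $\epsilon\in (0,\epsilon_\pi]$, the estimate
$|\varphi(k_1ak_2)|\le M\, a^{-(2+\epsilon)\rho}(1+\log\|a\|)^{k}$. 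For the third, property~(4) in the definition of a norm forces $\|k_1 a k_2\| = \|a\|$ for $k_i\in K$ and $a\in A$, so (\ref{gwm-000}) applied with any $d'\in (d_{\pi,\lambda},\epsilon_\pi/d)$ (non--empty by hypothesis, and we may freely assume $d'\ge 0$) combined with the upper estimate of Lemma~\ref{gwm-0} yields
$$
|\lambda(\pi(k_1ak_2)h)| \le \|a\|^{d'}\kappa(h) \le D^{d'}\kappa(h)\, a^{dd'\rho}.
$$
Multiplying the three bounds,
$$
D(a)\,|\varphi(k_1ak_2)|\,|\lambda(\pi(k_1ak_2)h)| \le C\, a^{(dd'-\epsilon)\rho}\,(1+\log\|a\|)^{k},
$$
with $C>0$ independent of $k_1,k_2,a$.

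At this point I would choose $\epsilon\in (dd',\epsilon_\pi]$, which is possible precisely because the assumption $d_{\pi,\lambda}<\epsilon_\pi/d$ forces $dd'<\epsilon_\pi$. Parametrising $A^+$ via the dual basis $a=\exp(\sum c_j H_j)$ with $c_j>0$, the Haar measure becomes Lebesgue $dc_1\cdots dc_r$ up to a constant, the factor
$a^{(dd'-\epsilon)\rho} = \exp\bigl((dd'-\epsilon)\sum_j c_j \rho(H_j)\bigr)$
decays exponentially in each $c_j$ since all $\rho(H_j)>0$ and $dd'-\epsilon<0$, and $(1+\log\|a\|)^{k}$ is polynomial in the $c_j$ by the upper bound of Lemma~\ref{gwm-0}. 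Thus the $A^+$-integral converges, and integrating over the compact $K\times K$ gives finiteness on $G$; Lemma~\ref{gwm-3} then delivers $\mathcal F^{loc}_{(\chi,U)}(\varphi)\neq 0$.

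The main obstacle is the bookkeeping of three competing exponential rates on $A^+$: the decay of $|\varphi|$ of order $a^{-(2+\epsilon)\rho}$ afforded by integrability of $\pi$, the possible growth of $|\lambda(\pi(\cdot)h)|$ translated via Lemma~\ref{gwm-0} to $a^{dd'\rho}$, and the Jacobian $D(a)\le a^{2\rho}$. The hypothesis $d_{\pi,\lambda}<\epsilon_\pi/d$ is the precise numerical condition that tips the net exponent $(dd'-\epsilon)\rho$ into the strictly negative chamber, after which the logarithmic factor and the $K\times K$ integrations are harmless.
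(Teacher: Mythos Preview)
Your proof is correct and follows essentially the same route as the paper's: reduce via Lemma~\ref{gwm-3} to the absolute convergence of $\int_G |\varphi(g)\,\lambda(\pi(g)h)|\,dg$, then on $K A^+ K$ combine the Mili\v ci\'c bound (Lemma~\ref{gwm-4}), the growth bound (\ref{gwm-000}) translated via Lemma~\ref{gwm-0}, and $D(a)\le a^{2\rho}$ to get a net exponent $(dd'-\epsilon)\rho$ which is made strictly negative by the hypothesis. The only cosmetic difference is that the paper treats the case $d_{\pi,\lambda}<0$ separately (boundedness of $g\mapsto\lambda(\pi(g)h)$ is then immediate), whereas you absorb it into the single observation that one may take $d'\ge 0$ in the interval $(d_{\pi,\lambda},\epsilon_\pi/d)$; your variable $d'$ corresponds to the paper's $\epsilon'/d$.
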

\begin{proof}  We keep the notation introduced in Lemma \ref{gwm-3} and immediately before the statement of Lemma \ref{gwm-4}.
  It is clear from Lemma \ref{gwm-3}  and comments after (\ref{gwm-0000}) that the lemma is true provided that
  $d_{\pi, \lambda} <0 $ since then the function $g\longmapsto \lambda(\pi(g)h)$  is bounded. So, assume that 
  $d_{\pi, \lambda} \ge 0$. By our assumption  $d\cdot d_{\pi, \lambda}<\epsilon_\pi$. We select any real number $\epsilon'\in
  \ ] d\cdot d_{\pi, \lambda}, \ \epsilon_\pi [$. Then, by definition of $d_{\pi, \lambda}$, we have 
$$
      |\lambda(\pi(g)h)|\le M'\cdot ||g||^{\epsilon'/d} , \ \ g\in G,
      $$
      for some constant $M'>0$. Hence, using the property (4) of the norm and  Lemma \ref{gwm-0}, there exists a
      constant $M{''}>0$
      such that
      $$
       |\lambda(\pi(k_1ak_2)h)|\le M{''}   a^{\epsilon'\rho}, \ \
  k_1, k_2\in K, \ a\in Cl(A^+).
  $$
  Select any $\epsilon\in  \ ] \epsilon', \ \epsilon_\pi [$. Then, by Lemma \ref{gwm-4}, 
      there exist $M>0$ and $k>0$ such that 
$$
  |\varphi(k_1ak_2)|\le M   a^{-(2+\epsilon)\rho}  \left(1+ \log{||a||}\right)^{k}, \ \
  k_1, k_2\in K, \ a\in Cl(A^+).
  $$
Thus, we obtain 

$$
\left|\varphi(k_1ak_2)\cdot \lambda(\pi(k_1ak_2)h)\right| \le M_1   a^{-2\rho + (\epsilon'-\epsilon)\rho}
\left(1+ \log{||a||}\right)^{k}, \ \
  k_1, k_2\in K, \ a\in Cl(A^+)
  $$
  for some constant $M_1>0$. Finally, by above recalled formula for Haar measure, using normalized measure on $K$ and
$$
D(a)\le a^{2\rho}, \ \ a\in A^+,
$$
we obtain

\begin{align*}
  \int_G \left|\overline{\varphi(g)}\lambda(\pi(g)h)\right| dg
  &= \int_K \int_{A^+}\int_K D(a)  \left|\varphi(k_1ak_2)\cdot \lambda(\pi(k_1ak_2)h)\right|
  dk_1 da dk_2\\
  &\le M_1 \int_{A^+} a^{(\epsilon'-\epsilon)\rho}
\left(1+ \log{||a||}\right)^{k} <\infty.
\end{align*}
\end{proof}

\vskip .2in
We expect that in all reasonable cases  i.e., $U\subset N$, the function $x\longmapsto \lambda(\pi(x)h)$ is bounded
for all $h\in \mathcal H^\infty$. This is what happens in the most important case.

\begin{Cor} \label{gwm-6} Assume that $\pi$ is integrable and $(\chi, N)$--generic for generic character
  $\chi$ (by definition, the differential $d\chi$ is non--trivial
  on any simple root subgroup $\mathfrak n_\alpha$, where $\alpha=\alpha_i$, $1\le i \le r$).  
  Then, there exists a non--zero $K$--finite matrix   coefficient $\varphi$ of $\pi$ such that
  $\mathcal F_{(\chi, U)}^{loc}(\varphi)\neq 0$.
\end{Cor}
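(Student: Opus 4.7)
The plan is to reduce Corollary \ref{gwm-6} directly to Lemma \ref{gwm-3}, bypassing the more delicate growth analysis of Theorem \ref{gwm-5}. Lemma \ref{gwm-3} demands a single vector $h\in \mathcal H_K$ satisfying two properties: (a) $\lambda(h)\neq 0$, and (b) absolute convergence of $\int_G |\overline{\varphi(x)}\,\lambda(\pi(x)h)|\,dx$, where $\varphi(x)=(\pi(x)h,h)$.

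For (a), since $\lambda$ is a non-zero continuous linear functional on the Fr\'echet space $\mathcal H^\infty$ and $\mathcal H_K$ is dense in $\mathcal H^\infty$, there exists $h\in \mathcal H_K$ with $\lambda(h)\neq 0$; fix such an $h$ from now on. The heart of the argument is establishing (b), and for this I would invoke Wallach's theorem on Whittaker vectors (\cite{W2}, Theorem 15.2.5). Since $\pi$ is $(\chi, N)$-generic with $\chi$ generic, this theorem yields an asymptotic expansion for $a\longmapsto \lambda(\pi(a)h)$ on $Cl(A^+)$ whose leading exponents are the same as those controlling the asymptotics of $K$-finite matrix coefficients of $\pi$. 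But $\pi$ is an integrable discrete series, so by Lemma \ref{gwm-4} the latter decay like $a^{-(2+\epsilon)\rho}$ (modulo logarithmic factors) on $Cl(A^+)$, and in particular are bounded.

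It follows that $a\longmapsto \lambda(\pi(a)h)$ is bounded on $Cl(A^+)$. Using the Iwasawa decomposition $G=NAK$ together with the $N$-equivariance $|\lambda(\pi(nak)h)|=|\lambda(\pi(a)\pi(k)h)|$ and the fact that $\{\pi(k)h:k\in K\}$ is a compact subset of $\mathcal H^\infty$ (since $h$ is $K$-finite), one transfers this boundedness to all of $G$: there exists $C_h>0$ with $|\lambda(\pi(g)h)|\le C_h$ for all $g\in G$. Since $\pi$ is integrable, the $K$-finite matrix coefficient $\varphi$ belongs to $L^1(G)$, and therefore
\begin{equation*}
\int_G |\overline{\varphi(x)}\,\lambda(\pi(x)h)|\,dx \;\le\; C_h \int_G |\varphi(x)|\,dx \;<\;\infty,
\end{equation*}
verifying hypothesis (b). Lemma \ref{gwm-3} then gives $\mathcal F^{loc}_{(\chi, N)}(\varphi)\neq 0$, as desired.

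The main obstacle is the careful invocation of Wallach's Whittaker asymptotics and the passage from asymptotic behavior on $Cl(A^+)$ to pointwise boundedness on all of $G$; once this is in hand, the $L^1$-integrability of $\varphi$ makes everything else immediate, and no analysis of the exponent $d_{\pi,\lambda}$ from (\ref{gwm-0000}) is needed in this case.
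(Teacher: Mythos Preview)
Your overall strategy coincides with the paper's: show that $g\mapsto \lambda(\pi(g)h)$ is bounded on $G$, then invoke Lemma \ref{gwm-3} directly (since $\varphi\in L^1(G)$). However, there is a genuine gap in your boundedness argument.

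Wallach's Theorem 15.2.5 gives an estimate of the form
\[
|\lambda(\pi(a)h')|\le a^{\Lambda}\,(1+\log\|a\|)^d\, q_\Lambda(h')
\]
only for $a\in Cl(A^+)$, i.e.\ for $a$ in the closure of a \emph{single} Weyl chamber. Combining this with the compactness of $\{\pi(k)h:k\in K\}$ you correctly obtain boundedness of $(a,k)\mapsto|\lambda(\pi(ak)h)|$ on $Cl(A^+)\times K$. But in the Iwasawa decomposition $G=NAK$, the element $a$ ranges over \emph{all} of $A$, not merely over $Cl(A^+)$. So from boundedness on $Cl(A^+)\times K$ and $N$-equivariance you cannot yet conclude boundedness on $G$; the step ``one transfers this boundedness to all of $G$'' is exactly where the argument is incomplete.

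The paper closes this gap by noting that the analogous Wallach estimate holds with $P$ replaced by any minimal parabolic $Q$ having the same split component $A$, yielding boundedness on $Cl(A^+_Q)\times K$ for each such $Q$. Since $\mathfrak a$ is the (finite) union of the closures of all Weyl chambers, one has $A=\bigcup_Q Cl(A^+_Q)$, and hence boundedness on all of $A\times K$. Only then do the $N$-equivariance of $\lambda$ (via the unitary $\chi$) and the Iwasawa decomposition give boundedness on $G$. Once you insert this step, your proof is essentially the paper's.
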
 
\begin{proof} This follows from Theorem \ref{gwm-5} as soon as we show that $x\longmapsto \lambda(\pi(x)h)$ is bounded
for all $h\in \mathcal H^\infty$.  
First, we define
  $$
\Lambda=  \Lambda_{\mathcal H_k}\in \mathfrak a^*
  $$
as in (\cite{W1}, 4.3). Since $\pi$ is in the discrete series, there exists  real numbers $k_1, k_2, \ldots, k_r> 0$ such that
$$
\Lambda=-\rho-\sum_{i=1}^r k_i\alpha_i,
$$
where  $\left\{\alpha_1, \ldots, \alpha_r\right\}$ is the set of simple roots in $ \Phi^+(\mathfrak g, \mathfrak a)$
as we denoted above (\cite{W1}, Proposition 5.1.3 and Theorem 5.5.4).
Furthermore, by (\cite{W2}, Lemma 15.2.3 and
Theorem 15.2.5), there exists a continuous seminorm $q_\Lambda$ on $\mathcal
H^\infty$ and a  real number $d\ge 0$  such that
$$
\left|\lambda(\pi(a)h')\right|\le a^\Lambda \left(1+ \log{||a||}\right)^d
q_\Lambda(h'), \ \ h'\in \mathcal H^\infty, \ a\in Cl(A^+).
$$
By above considerations, for $k\in K$ and $h'=\pi(k)h$ we have
$$
\left|\lambda(\pi(ak)h)\right|\le a^{-\rho} \left(1+ \log{||a||}\right)^d
q_\Lambda(\pi(k)h), \ \ a\in Cl(A^+)
$$
which implies that the function
$$
(a, k)\in A \times K \longmapsto \left|\lambda(\pi(ak)h)\right| \in \mathbb C
$$
is bounded on $Cl(A^+)\times K$.  But the analogous holds when $Q$ ranges over a finite set of  parabolic subgroups
with split component $A$: the function is bounded on $Cl(A^+_Q)\times K$  for
analogously defined $Cl(A^+_Q)$. But then the function itself is bounded
on $A\times K$ since $A$ is union of all $Cl(A^+_Q)$ when $Q$ ranges over all parabolic subgroups with a split
component $A$. This
is essentially the fact that $\mathfrak a$ is the union of closures of Weyl chambers.

Now, since $\chi$ is unitary, (\ref{gwm-00}) holds, the Iwasawa decomposition $G=NAK$ implies that
the function  $g \longmapsto \left|\lambda(\pi(g)h)\right|$ is bounded for all $h\in\mathcal H^\infty$.
To complete the proof, we apply Lemma \ref{gwm-3}.
  \end{proof}

\section{Preliminaries on Automorphic Forms and Poincar\' e Series}\label{paf}

From now on, in this paper we assume that $G$ is a group of $\mathbb R$--points of a
semisimple algebraic group $\mathcal G$ defined over $\mathbb Q$. Assume that $G$ is not compact and connected.
Let $\Gamma\subset G$ be congruence subgroup with respect to arithmetic structure given by the
fact that $\mathcal G$ defined over $\mathbb Q$ (see \cite{BJ}, or Section \ref{app}).  Then,  $\Gamma$ is a discrete subgroup
of $G$ and it has a finite covolume. 

An automorphic form (for $\Gamma$) is a function $f\in C^\infty(G)$ satisfying the following
three conditions (\cite{W3} or \cite{BJ}):

\begin{itemize}
\item[(A-1)] $f$ is  $\cal Z(\mathfrak g_{\mathbb C})$--finite and $K$--finite on the right;
\item[(A-2)] $f$ is left--invariant under $\Gamma$ i.e., $f(\gamma x)=f(x)$ for all $\gamma\in \Gamma$, $x\in G$;
\item[(A-3)] there exists $r\in\mathbb R$, $r>0$
  such that for each $u\in \mathcal U(\mathfrak g_{\mathbb C})$ there exists a constant
  $C_u>0$ such that $\left|u.f(x)\right|\le C_u \cdot ||x||^r$, for all $x\in G$.
\end{itemize}
We write $\mathcal A(\Gamma\backslash G)$ for the vector space of all automorphic forms. It is easy to see that
$\mathcal A(\Gamma\backslash G)$ is a $(\mathfrak g, K)$--module. An automorphic form
$f\in \mathcal A(\Gamma\backslash G)$  is a cuspidal automorphic form if for every proper $\mathbb Q$--proper
parabolic $\mathcal P\subset \mathcal G$ we have
$$
\int_{U\cap \Gamma \backslash U} f(ux)dx=0, \ \ x\in G,
$$
where $U$ is the group of $\mathbb R$--points of the unipotent radical of  $\mathcal P$.
We remark that the quotient $U\cap \Gamma \backslash U$ is compact. We use normalized $U$--invariant measure on
$U\cap \Gamma \backslash U$. The space of all cuspidal automorphic forms for $\Gamma$
is denoted by $\mathcal A_{cusp}(\Gamma\backslash G)$.

\vskip .2in 

For the sake of completeness, we state the following standard result:

\begin{Lem}\label{paf-1} Under above assumptions, we have the following:
  \begin{itemize}
         \item[(a)] If  $f\in C^\infty(G)$ satisfies (A-1), (A-2), and
        there exists  $r\ge 1$ such that $f\in L^r(\Gamma\backslash G)$, then f satisfies (A-3), and it is therefore an
        automorphic form. We speak about $r$--integrable automorphic form, for $r=1$ (resp., $r=2$) we speak about integrable
        (resp., square--integrable) automorphic form.
      \item[(b)] Let $r\ge 1$. Every $r$--integrable automorphic form is integrable.
      \item[(c)] Bounded integrable automorphic form is square--integrable.
      \item[(d)] If $f$ is square integrable automorphic form, then the minimal $G$--invariant closed subspace of
        $L^2(\Gamma\backslash G)$ is a direct is of finitely many irreducible unitary representations.
       \item[(e)] Every cuspidal automorphic form is square--integrable.
    \end{itemize}
  \end{Lem}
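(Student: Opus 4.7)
The plan is to handle the five parts in order of increasing difficulty. Parts (b) and (c) are immediate consequences of H\"older's inequality on the finite--volume space $\Gamma\backslash G$: for (b) one has $\int_{\Gamma\backslash G}|f|\le \text{vol}(\Gamma\backslash G)^{1/r'}\cdot ||f||_r$, and for (c) one has $\int_{\Gamma\backslash G}|f|^2\le ||f||_\infty\cdot ||f||_1$. So no work is required beyond finiteness of the covolume.

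For (a), the strategy is Harish--Chandra's convolution trick. By Lemma \ref{cbr-4}, from $f$ being $\mathcal Z(\mathfrak g_{\mathbb C})$--finite and $K$--finite on the right, one obtains $\alpha\in C_c^\infty(G)$ with $f=f\star\alpha$, i.e., $f(x)=\int_G f(xy^{-1})\alpha(y)\,dy$. H\"older's inequality in the $y$--integral yields
\[
|f(x)|^r\le ||\alpha||_{r'}^{\,r}\int_{x\cdot\supp(\alpha)^{-1}}|f(z)|^r\,dz.
\]
Fixing a Siegel fundamental set for $\Gamma\backslash G$ and using left $\Gamma$--invariance, one bounds the last integral by $N(x)\cdot ||f||_{L^r(\Gamma\backslash G)}^r$, where $N(x)$ counts the number of $\Gamma$--translates of the fundamental set meeting the compact set $x\cdot\supp(\alpha)^{-1}$. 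The standard reduction--theoretic estimate gives $N(x)\le C||x||^R$ for some $C,R>0$, and hence $|f(x)|\le C'||x||^{R/r}\,||f||_r$. For a general $u\in\mathcal U(\mathfrak g_{\mathbb C})$, the same argument applies to $u.f$, which is again $\mathcal Z(\mathfrak g_{\mathbb C})$--finite and $K$--finite on the right (its $K$--span lies inside $u\cdot\text{span}(K\cdot f)$, still finite--dimensional). This establishes (A-3). The main obstacle is the lattice--point count $N(x)\le C||x||^R$, which ultimately rests on reduction theory for arithmetic subgroups of $\mathcal G(\mathbb Q)$.

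For (d), the idea is that an $f\in L^2(\Gamma\backslash G)$ which is $\mathcal Z(\mathfrak g_{\mathbb C})$--finite and $K$--finite on the right lies automatically in the discrete part of $L^2(\Gamma\backslash G)$: the continuous spectrum, coming from Eisenstein series attached to proper $\mathbb Q$--parabolics, cannot support $\mathcal Z(\mathfrak g_{\mathbb C})$--finite $K$--finite vectors since such spectral components have trivial projection onto the discrete series of the Levi. Lemma \ref{cbr-6} (applied with $U=\{1\}$ and $\chi$ trivial) then shows that the $(\mathfrak g,K)$--module generated by $f$ inside $L^2$ is admissible of finite length; closing up in $L^2$ yields the desired finite direct sum of irreducible unitary subrepresentations.

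Part (e) uses the standard theory of constant terms. For $f\in\mathcal A_{cusp}(\Gamma\backslash G)$ and any proper $\mathbb Q$--parabolic $\mathcal P$ with unipotent radical $U$, the cuspidality condition $\int_{U\cap\Gamma\backslash U}f(ux)\,du=0$ forces the $\mathcal P$--constant term of $f$ to vanish. By Harish--Chandra's refined growth estimates on Siegel sets (the classical Gelfand--Piatetski-Shapiro decay principle), once every proper constant term vanishes the function decays faster than any polynomial on each Siegel set. Since $\Gamma\backslash G$ is covered (up to a set of finite measure) by finitely many such Siegel sets, $f$ is of rapid decay on $\Gamma\backslash G$, and in particular is square--integrable. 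This rapid decay result is the deepest input used in the lemma.
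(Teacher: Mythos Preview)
Your proposal is essentially correct and aligned with the paper, which simply cites references: (a) and (e) to Borel--Jacquet \cite{BJ}, (b) to H\"older (as in \cite{MuicMathAnn}), (c) as obvious, and (d) directly to Wallach (\cite{W1}, Corollary 3.4.7 and Theorem 4.2.1). You have supplied the actual content behind those citations, and your sketches for (a), (b), (c), (e) are accurate.

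One correction in (d): invoking Lemma \ref{cbr-6} with $U=\{1\}$ and $\chi$ trivial does not yield $L^2(\Gamma\backslash G)$ but rather $L^1(G)$, which is not the ambient space you want. The point is that the Wallach results underlying Lemma \ref{cbr-6} (namely \cite{W1}, 3.4.7 and 4.2.1) apply to \emph{any} Banach representation of $G$, in particular to $L^2(\Gamma\backslash G)$ directly; this is exactly what the paper cites. Your digression about the continuous spectrum is unnecessary and the justification you give (``trivial projection onto the discrete series of the Levi'') is not the right mechanism. Once you know the $(\mathfrak g,K)$--module generated by $f$ is admissible of finite length inside a unitary representation, semisimplicity of unitary finite--length $(\mathfrak g,K)$--modules gives the decomposition into irreducibles, and closing up each summand yields the finite direct sum of irreducible unitary subrepresentations. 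No appeal to Langlands spectral decomposition is needed.
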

\begin{proof} For the claims (a) and (e) we refer to \cite{BJ} and reference there.
  Since the volume of $\Gamma \backslash G$ is finite, the claim (b) follows from H\" older inequality (as in
  \cite{MuicMathAnn}, Section 3). The claim (c) is obvious. The claim (d) follows from (\cite{W1}, Corollary 3.4.7 and
  Theorem  4.2.1).
\end{proof}

\vskip .2in
In the generality that we consider in this section, the constant function is one example of square--integrable automorphic form.
The other one are provided by the standard Poincar\' e Series (see \cite{bb}, Theorem 5.4,
and \cite{Borel-SL(2)-book}, Theorem 6.1). The case of $SL_2(\mathbb R)$ is sharpen in (\cite{MuicJNT}, Lemma 2.9).
The adelic version  of the following lemma (i) and (ii)  is contained  in (\cite{MuicMathAnn}, Theorem 3.10).

\vskip .2in 

\begin{Lem}\label{cps-2} Assume  $\varphi\in C^\infty(G)$ that is  $\cal Z(\mathfrak g_{\mathbb C})$--finite,
  $K$--finite, and in $L^1(G)$.  Then, we have the following:
  \begin{itemize}
    \item[(i)] The   Poincar\' e series $P(\varphi)(g)=\sum_{\gamma\in \Gamma} \varphi(\gamma g)$ converges absolutely and
  uniformly on compact sets, and it is a cuspidal automorphic form.

\item[(ii)] (Non--vanishing criterion of \cite{MuicMathAnn})  Let $\pi$ be an integrable discrete series and
  $\varphi$ be a non--zero $K$--finite matrix coefficient of $\pi$.  Assume that there exists a compact neighborhood $C$
  in $G$ such that 
$$
\int_C |\varphi(g)|dg > \int_{G-C}
|\varphi(g)|dg\  \text{ and } \ \Gamma \cap C\cdot C^{-1}=\{1\}.
$$
Then, $P(\varphi)\neq 0$.

\item[(iii)]   Let $\Gamma_1\supset \Gamma_2\supset \dots$ be a sequence
  of discrete subgroups of $G$ such that   $\cap_{n\ge 1}
  \Gamma_n=\{1\}$.  Let $\pi$ be an integrable discrete series and
  $\varphi$ be a non--zero $K$--finite matrix coefficient of $\pi$. Put $P_n(\varphi)(x)=\sum_{\gamma\in \Gamma_n}
  \varphi (\gamma x)$, for all $n\ge 1$. Then, there exists $n_0$ depending on $\varphi$ such that
   $P_n(\varphi)\neq 0$ for $n\ge n_0$.
    \end{itemize}
\end{Lem}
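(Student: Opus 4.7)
Part (i) is essentially the real-group analog of \cite{MuicMathAnn}, Theorem 3-10, and I would prove it in three steps. First, absolute and locally uniform convergence of $\sum_{\gamma\in\Gamma}\varphi(\gamma g)$: using Lemma \ref{cbr-4}, write $\varphi=\alpha\star\varphi$ with $\alpha\in C_c^\infty(G)$, so that
$$|\varphi(\gamma g)|\le \|\alpha\|_\infty\int_{\supp(\alpha)}|\varphi(x^{-1}\gamma g)|\,dx;$$
choosing a relatively compact open $V\ni 1$ with $V^{-1}V\cap\Gamma=\{1\}$ and fixing a compact $\Omega\subset G$, the translates $\{\gamma W:\gamma\in\Gamma\}$ of a suitable compact $W=V\supp(\alpha)^{-1}\Omega$ have bounded overlap, which majorizes $\sum_\gamma\sup_{g\in\Omega}|\varphi(\gamma g)|$ by a constant multiple of $\|\varphi\|_{L^1}$. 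Second, the same estimate applied to the functions $T.\varphi$ (still $\mathcal Z(\mathfrak g_{\mathbb C})$-finite, $K$-finite, and $L^1$) permits term-by-term differentiation, so $P(\varphi)\in C^\infty(G)$ is $\mathcal Z(\mathfrak g_{\mathbb C})$-finite, $K$-finite on the right, $\Gamma$-invariant, and of moderate growth; thus $P(\varphi)\in\mathcal A(\Gamma\backslash G)$. Third, for cuspidality one unfolds
$$\int_{U\cap\Gamma\backslash U}P(\varphi)(ux)\,du=\sum_{\delta\in(U\cap\Gamma)\backslash\Gamma}\int_U\varphi(\delta u x)\,du,$$
and invokes the fact that the $(\mathfrak g,K)$-module generated by $\varphi$ (admissible of finite length by Lemma \ref{cbr-6}) decomposes, thanks to $\varphi\in L^1(G)$, into discrete-series constituents whose parabolic constant terms vanish.

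Part (ii) is a direct estimation argument. The hypothesis $\Gamma\cap CC^{-1}=\{1\}$ means $\gamma^{-1}\gamma'\notin CC^{-1}$ for $\gamma\ne\gamma'$ in $\Gamma$, i.e.\ the family $\{\gamma C\}_{\gamma\in\Gamma}$ is pairwise disjoint; in particular $\gamma C\cap C=\emptyset$ for $\gamma\ne 1$, so $\bigsqcup_{\gamma\ne 1}\gamma C\subset G\setminus C$. Splitting off the $\gamma=1$ term, the triangle inequality yields
$$\int_C|P(\varphi)(g)|\,dg\ \ge\ \int_C|\varphi(g)|\,dg\ -\ \sum_{\gamma\ne 1}\int_C|\varphi(\gamma g)|\,dg,$$
and the change of variables $g\mapsto\gamma^{-1}g$ in each summand, together with the disjointness just noted, gives
$$\sum_{\gamma\ne 1}\int_C|\varphi(\gamma g)|\,dg\ =\ \sum_{\gamma\ne 1}\int_{\gamma C}|\varphi(g)|\,dg\ \le\ \int_{G\setminus C}|\varphi(g)|\,dg.$$
Combining, $\int_C|P(\varphi)|\,dg\ge\int_C|\varphi|-\int_{G\setminus C}|\varphi|>0$, so $P(\varphi)\not\equiv 0$.

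Part (iii) reduces to (ii) via discreteness. Since $\varphi\in L^1(G)$, pick a compact $C\subset G$ with $\int_C|\varphi|>\int_{G\setminus C}|\varphi|$ (for instance a large enough closed ball in the norm $\|\cdot\|$). The set $CC^{-1}$ is compact, so $\Gamma_1\cap CC^{-1}$ is finite, say $\{1,\gamma_2,\ldots,\gamma_s\}$. For each $i\ge 2$, the hypothesis $\bigcap_n\Gamma_n=\{1\}$ furnishes an index $n_i$ with $\gamma_i\notin\Gamma_{n_i}$. Set $n_0=\max_{2\le i\le s}n_i$; then for $n\ge n_0$ the inclusion $\Gamma_n\subset\Gamma_{n_0}$ forces $\Gamma_n\cap CC^{-1}=\{1\}$, and (ii) applied to $\Gamma_n$ in place of $\Gamma$ yields $P_n(\varphi)\ne 0$. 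The main obstacle in the whole lemma is the cuspidality assertion in (i)—the convergence and moderate-growth parts are soft, but identifying the parabolic constant terms as zero for a general $\mathcal Z(\mathfrak g_{\mathbb C})$-finite, $K$-finite, $L^1$ function requires the structural fact that such functions lie in the discrete-series part of $L^2(G)$; this is where I would follow \cite{MuicMathAnn} rather than argue from scratch.
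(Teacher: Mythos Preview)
Your proposal is correct and follows precisely the arguments that the paper cites: the paper's own proof of this lemma consists entirely of references (to \cite{Borel-SL(2)-book}, Theorem 6.1 and \cite{MuicMathAnn}, Theorems 3-10, 4-1, and Corollary 4-9), and you have accurately reconstructed the content of those references. Your argument for (ii) is exactly the disjoint-translates estimate of \cite{MuicMathAnn}, Theorem 4-1, and your reduction in (iii) via finiteness of $\Gamma_1\cap CC^{-1}$ is exactly Corollary 4-9 there; for (i) your convolution-majorization argument is the standard one (compare also the paper's own Lemma \ref{cps-7}, which runs the same estimate in a closely related setting).
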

\begin{proof} (i) has a proof similar to that of (\cite{Borel-SL(2)-book}, Theorem 6.1) where the case $G=SL_2(\mathbb R)$
  is considered (see also the proof of (\cite{MuicMathAnn}, Theorem 3.10) in adelic settings).
The  claim (ii) is (\cite{MuicMathAnn}, Theorem  4-1). Finally, the claim (iii) is (\cite{MuicMathAnn}, Corollary 4-9).
\end{proof}

\section{On a Construction of Certain Poincar\' e Series}\label{cps}

In this section we maintain assumptions of  Section \ref{paf}.  We assume that $U$ is the group of $\mathbb R$--points of the
unipotent radical of a proper $\mathbb Q$--proper parabolic $\mathcal P\subset \mathcal G$. Then, $U\cap \Gamma$ is
cocompact in $U$. Let $\chi$ be a character of $U$ trivial on $U\cap \Gamma$.

The non--vanishing of Poincar\' e series discussed in Lemma \ref{cps-2} is a subtle problem
(see applications of sufficient criterion Lemma \ref{cps-2} (vi) in adelic settings \cite{MuicMathAnn} and in
the case of $SL_2(\mathbb R)$ \cite{MuicJNT}). In this section, we discuss  a different approach based on
Fourier coefficients.

\begin{Def}\label{cps-1000}
The $(\chi, U)$--Fourier coefficient of a function 
$f\in C^\infty(\Gamma\backslash G)$  is defined as follows:

$$
\mathcal F_{(\chi, U)}(f)(x)=\int_{U\cap \Gamma\backslash U} f(ux)\overline{\chi(u)}du,
$$
where we use a normalized measure on a compact topological space $U\cap \Gamma\backslash U$.
We say that an automorphic form $f$ (see Section \ref{paf} for definition) is $(\chi, U)$--generic if
$F_{(\chi, U)}(f)\neq 0$. A $(\mathfrak g, K)$--submodule $V\subset \mathcal A(\Gamma\backslash G)$ is generic
if there exists at least one $f\in W$ such that $F_{(\chi, U)}(f)\neq 0$.
\end{Def}

\vskip .2in
Now, we compute the Fourier coefficients of Poincar\' e series described by Lemma \ref{cps-2}. We remind the reader that
$\varphi^\vee(x)=\varphi(x^{-1})$, and $l(x)\varphi(y)=\varphi(x^{-1}y)$ is a left translation.

\begin{Lem}\label{cps-3} Assume  $\varphi\in C^\infty(G)$ that is  $\cal Z(\mathfrak g_{\mathbb C})$--finite,
  $K$--finite, and in $L^1(G)$. Then, the
  Fourier coefficient $\mathcal F_{(\chi, U)} \left(P(\varphi)\right)$ of the Poincar\' e series $P(\varphi)$ is given by the
  following expression:

  $$
  \mathcal F_{(\chi, U)}
  \left(P(\varphi)\right)(x)= \sum_{\gamma\in  U\cap \Gamma \backslash \Gamma }  \int_{ U}l(x)\varphi^\vee (u\gamma)\chi(u)du, \ \ x\in G.
$$
\end{Lem}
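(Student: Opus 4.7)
The identity is essentially a standard unfolding/change-of-variable computation; the only real content is justifying the interchange of sum and integral. My plan is as follows.

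First, I would substitute the definitions directly: by the definition of $\mathcal F_{(\chi, U)}$ and of $P(\varphi)$,
\[
\mathcal F_{(\chi, U)}\bigl(P(\varphi)\bigr)(x)
=\int_{U\cap\Gamma\backslash U}\sum_{\gamma\in\Gamma}\varphi(\gamma u x)\,\overline{\chi(u)}\,du.
\]
By Lemma \ref{cps-2}(i), the Poincar\'e series $P(\varphi)$ converges absolutely and uniformly on compact subsets of $G$; since $U\cap\Gamma\backslash U$ is compact (because $\mathcal P$ is a proper $\mathbb Q$-parabolic, so $U\cap\Gamma$ is cocompact in $U$) and $|\overline{\chi(u)}|=1$, the sum and the integral may be interchanged.

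Next I would partition $\Gamma$ into right cosets $\Gamma=\bigsqcup_{\gamma\in\Gamma/(U\cap\Gamma)}\gamma(U\cap\Gamma)$, so that
\[
\sum_{\gamma\in\Gamma}\varphi(\gamma u x)
=\sum_{\gamma\in\Gamma/(U\cap\Gamma)}\sum_{\delta\in U\cap\Gamma}\varphi(\gamma\delta u x).
\]
The unfolding step then combines $\sum_{\delta\in U\cap\Gamma}$ with $\int_{U\cap\Gamma\backslash U}du$ into $\int_U dv$ via the substitution $v=\delta u$; here I use that $\chi$ is trivial on $U\cap\Gamma$, so $\overline{\chi(u)}=\overline{\chi(v)}$, together with the fact that $U$ is unimodular and left Haar measure on $U$ descends to the chosen measure on $U\cap\Gamma\backslash U$. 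Thus the expression becomes
\[
\sum_{\gamma\in\Gamma/(U\cap\Gamma)}\int_U\varphi(\gamma v x)\,\overline{\chi(v)}\,dv.
\]

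Finally, to bring this into the form stated in the lemma, I would make the two cosmetic substitutions $v\mapsto v^{-1}$ (using unimodularity of $U$ and $\overline{\chi(v^{-1})}=\chi(v)$ for a unitary character) and $\gamma\mapsto\gamma^{-1}$ (which turns right cosets $\Gamma/(U\cap\Gamma)$ into left cosets $(U\cap\Gamma)\backslash\Gamma$). Since $l(x)\varphi^\vee(v\gamma)=\varphi((v\gamma)^{-1}x)=\varphi(\gamma^{-1}v^{-1}x)$, these substitutions produce exactly
\[
\sum_{\gamma\in(U\cap\Gamma)\backslash\Gamma}\int_U l(x)\varphi^\vee(u\gamma)\,\chi(u)\,du,
\]
as required.

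The only real obstacle is the interchange of summation and integration, but this is handled cleanly by the absolute-uniform convergence provided by Lemma \ref{cps-2}(i) on the compact set $\{ux:u\in U\cap\Gamma\backslash U\}\cdot\{x\}$; everything else is formal bookkeeping with cosets, Haar measure on $U$, and the triviality of $\chi$ on $U\cap\Gamma$.
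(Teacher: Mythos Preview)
Your proposal is correct and follows essentially the same route as the paper's own proof: both justify the interchange of sum and integral via the absolute and uniform convergence of $P(\varphi)$ on compacta (Lemma \ref{cps-2}(i)) together with compactness of $U\cap\Gamma\backslash U$, then unfold by decomposing $\Gamma$ into $(U\cap\Gamma)$--cosets, use that $\chi$ is trivial on $U\cap\Gamma$, and finish with the substitutions $u\mapsto u^{-1}$, $\gamma\mapsto\gamma^{-1}$ to rewrite the integrand as $l(x)\varphi^\vee(u\gamma)\chi(u)$. The only difference is the order in which the final cosmetic substitutions are made.
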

\begin{proof} Since the series $P(\varphi)(g)=\sum_{\gamma\in \Gamma} \varphi(\gamma g)$ converges absolutely and
  uniformly on compact sets (see Lemma \ref{cps-2} (i)),   and $U\cap \Gamma\backslash U$ is compact, we have a
  standard unfolding

  \begin{align*}
    \mathcal F_{(\chi, U)} \left(P(\varphi)\right)(x)&=
    \int_{U\cap \Gamma\backslash U} \left(\sum_{\gamma\in \Gamma} \varphi(\gamma ux)\right)\overline{\chi(u)}du\\
    &= \int_{U\cap \Gamma\backslash U} \left(\sum_{\gamma\in \Gamma \setminus U\cap \Gamma} \ \ \sum_{\delta \in  U\cap \Gamma} \varphi(\gamma\delta ux)
    \right)\overline{\chi(u)}du\\
    &= \sum_{\gamma\in \Gamma \setminus U\cap \Gamma} \ \ 
    \int_{U\cap \Gamma\backslash U} \left( \sum_{\delta \in  U\cap \Gamma} \varphi(\gamma\delta ux)\right)\overline{\chi(u)}du.\\
\end{align*}

Since $\chi$ is trivial on $U\cap \Gamma$, the integral inside the sum can be written in the form

\begin{align*}
\int_{U\cap \Gamma\backslash U} \left( \sum_{\delta \in  U\cap \Gamma} \varphi(\gamma\delta ux)\right)\overline{\chi(u)}du &=
\int_{U\cap \Gamma\backslash U} \left( \sum_{\delta \in  U\cap \Gamma} \varphi(\gamma\delta ux)\right)\overline{\chi(\delta u)}du\\
&=\int_{ U}\varphi(\gamma ux)\overline{\chi(u)}du\\
&=\int_{ U}l(x)\varphi^\vee (u^{-1}\gamma^{-1})\overline{\chi(u)}du\\
&=\int_{ U}l(x)\varphi^\vee (u\gamma^{-1})\chi(u)du.\\
\end{align*}

Finally, we have 
\begin{align*}
\mathcal F_{(\chi, U)} \left(P(\varphi)\right)(x)&=  \sum_{\gamma\in \Gamma \setminus U\cap \Gamma}  \int_{ U}l(x)\varphi^\vee 
(u\gamma^{-1})\chi(u)du\\
&=  \sum_{\gamma\in  U\cap \Gamma \backslash \Gamma }  \int_{ U}l(x)\varphi^\vee (u\gamma)\chi(u)du.\\
\end{align*}
\end{proof}

\vskip .2in 
Lemma \ref{cps-3} suggest that we consider the following class of automorphic forms:

\begin{Lem}\label{cps-7} Assume  $\varphi\in C^\infty(G)\cap I^1(G, U, \chi)$ (see Section \ref{cbr} for notation)
 is  $\cal Z(\mathfrak g_{\mathbb C})$--finite and $K$--finite on 
the right. Then, the series 
$$
W_{(\chi, U)}(\varphi)(x)=W^\Gamma_{(\chi, U)}(\varphi)(x)\overset{def}{=}\sum_{\gamma \in U\cap \Gamma\backslash \Gamma} \varphi(\gamma x)
$$
converges absolutely and uniformly on compacta in $G$ to  an integrable automorphic form
i.e., $W_{(\chi, U)}(\varphi) \in \mathcal A(\Gamma \backslash G)\cap L^1(\Gamma \backslash G)$.
\end{Lem}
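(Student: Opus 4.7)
The plan is to obtain absolute, locally uniform convergence in two stages: an $L^1$ bound on the majorant $F(x):=\sum_{\gamma\in U\cap\Gamma\backslash\Gamma}|\varphi(\gamma x)|$ obtained by routine unfolding, followed by a smoothing argument using $\varphi=\beta.\varphi$ from Corollary \ref{cbr-5} that upgrades this $L^1$ bound to uniform control on compact subsets of $G$.

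Since $\chi$ is unitary, $|\varphi|$ is left $U$-invariant, so each summand in $F$ depends only on the coset of $\gamma$ in $U\cap\Gamma\backslash\Gamma$, and $F$ is a well-defined $\Gamma$-invariant non-negative measurable function. Taking a fundamental domain $D_\Gamma$ for $\Gamma\backslash G$, the set $\bigsqcup_{\gamma\in U\cap\Gamma\backslash\Gamma}\gamma D_\Gamma$ is a fundamental domain for $(U\cap\Gamma)\backslash G$, so monotone convergence gives
\[
\int_{\Gamma\backslash G} F(x)\,dx=\int_{(U\cap\Gamma)\backslash G}|\varphi(x)|\,dx=\mathrm{vol}(U\cap\Gamma\backslash U)\cdot\|\varphi\|_{U,1}=\|\varphi\|_{U,1}<\infty,
\]
using the normalization $\mathrm{vol}(U\cap\Gamma\backslash U)=1$. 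Hence $F\in L^1(\Gamma\backslash G)$; in particular $F$ is finite almost everywhere, and being $\Gamma$-invariant with finite covolume, $F$ is locally integrable on $G$: any compact $E\subset G$ is covered by finitely many $\Gamma$-translates of $D_\Gamma$, giving $\int_E F\le N_E\cdot\|\varphi\|_{U,1}$.

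For the second stage, Corollary \ref{cbr-5} supplies $\beta\in C_c^\infty(G)$ with $\varphi(y)=\int_G\beta(h)\varphi(yh)\,dh$. Setting $B=\mathrm{supp}(\beta)$ and substituting $z=yh$ yields the pointwise bound $|\varphi(\gamma x)|\le\|\beta\|_\infty\int_{xB}|\varphi(\gamma z)|\,dz$, whence
\[
\sum_{\gamma}|\varphi(\gamma x)|\le\|\beta\|_\infty\int_{xB}F(z)\,dz\le\|\beta\|_\infty\int_{CB}F(z)\,dz
\]
for any $x$ in a compact $C\subset G$. The right-hand side is finite and independent of $x\in C$. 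Applying the same estimate to the tail $\sum_{\gamma\notin S}$ with $S$ a finite subset of $U\cap\Gamma\backslash\Gamma$, and invoking dominated convergence on the index set (against the $L^1$ majorant $F\cdot\mathbf 1_{CB}$), the tail tends to zero uniformly on $C$. This establishes absolute, locally uniform convergence of the series defining $W_{(\chi,U)}(\varphi)$.

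It remains to verify the automorphic form properties. The limit $W:=W_{(\chi,U)}(\varphi)$ is left-$\Gamma$-invariant by reindexing, satisfies $|W|\le F$, and hence lies in $L^1(\Gamma\backslash G)$. Right $K$-translation commutes with summation, so $r(k)W$ is the series associated to $r(k)\varphi$, which lies in the finite-dimensional right-$K$-span of $\varphi$; so $W$ is right $K$-finite. For smoothness and $\mathcal Z(\mathfrak g_{\mathbb C})$-finiteness, I would apply the same two-stage argument to $X.\varphi$ for $X\in\mathcal U(\mathfrak g_{\mathbb C})$: by Lemma \ref{cbr-6} the $(\mathfrak g,K)$-module generated by $\varphi$ in $I^1(G,U,\chi)$ is admissible of finite length, so $X.\varphi$ lies in it and inherits smoothness together with the $L^1$-and-uniform-tail bounds above. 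Since right-invariant differentiation commutes with left translations, term-by-term differentiation gives $W\in C^\infty(G)$ with $X.W$ the analogous series built from $X.\varphi$; in particular $W$ is annihilated by the same $\mathcal Z$-ideal of finite codimension as $\varphi$. Lemma \ref{paf-1}(a) then supplies condition (A-3), completing the proof. The main obstacle is the passage from the $L^1$ bound on $F$ (immediate from unfolding) to pointwise and locally uniform convergence; the Harish-Chandra smoothing identity $\varphi=\beta.\varphi$ is what converts pointwise values of $|\varphi|$ into local integrals of $F$, reducing matters to local integrability of the already-constructed majorant.
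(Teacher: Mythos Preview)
Your proof is correct and rests on the same two ingredients as the paper---the unfolding identity and the Harish--Chandra smoothing $\varphi=\beta.\varphi$---but you organize them differently. The paper first fixes carefully chosen neighborhoods $W,D,V$, uses the smoothing to bound $|\varphi(\gamma x)|$ by $|\alpha|_\infty\int_{(U\cap\Gamma)\backslash (U\cap\Gamma)\gamma DV^{-1}}|\varphi|$, and then invokes a disjointness statement for these coset--sets (the implication (\ref{cps-9})) to sum directly to $|\alpha|_\infty\int_{(U\cap\Gamma)\backslash G}|\varphi|$; the $L^1$-bound is done afterwards by a separate unfolding. You instead unfold first to get the majorant $F\in L^1(\Gamma\backslash G)$, deduce local integrability of $F$ on $G$, and only then apply the smoothing to dominate $F(x)$ by a local integral of $F$. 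Your route is arguably more economical, since it bypasses the choice of $W,D,V$ and the coset--disjointness verification, and it gives the uniform tail estimate cleanly via dominated convergence.

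One small technical remark: the sentence ``any compact $E\subset G$ is covered by finitely many $\Gamma$-translates of $D_\Gamma$'' is not literally true for an arbitrary measurable fundamental domain $D_\Gamma$ (which need not be bounded when $\Gamma\backslash G$ is noncompact). The conclusion $\int_E F\le N_E\cdot\|\varphi\|_{U,1}$ is correct, however: unfold $\int_E F=\int_{\Gamma\backslash G}\bigl(\sum_{\gamma\in\Gamma}1_E(\gamma g)\bigr)F(g)\,d\bar g$ and observe that whenever $\gamma g,\gamma' g\in E$ one has $\gamma\gamma'^{-1}\in EE^{-1}$, so the inner multiplicity is bounded by $N_E:=|\Gamma\cap EE^{-1}|<\infty$. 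With this adjustment your argument goes through as written.
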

\begin{proof}The idea of the proof that it converges absolutely on compacta in $G$
is based on a slight improvement of classical argument as explained in (\cite{MuicMathAnn}, Theorem 3-10). We adapt the 
argument to the present situation. 

Since $\Gamma$ is discrete, there exists a compact neighborhood $W$ of $1\in G$ such that

\begin{equation}\label{cps-8}
  \Gamma \cap W=\{1\}.
\end{equation}
To prove that the series converges absolutely and uniformly on compacta in $G$, it is enough to prove that for each 
$g\in G$ there exists a  compact neighborhood $g\in D\subset G$ such that the series converges absolutely and uniformly. 
So, let $g\in G$ be fixed. We select and compact neighborhoods  $g\in D\subset G$ and  $1\in V\subset G$ such that 
$DV^{-1} V D^{-1}\subset W$. By (\ref{cps-8}),  we have the following:
\begin{equation}\label{cps-9}
  \text{if $\left(U\cap \Gamma\right)\gamma DV^{-1} \cap \left(U\cap \Gamma\right)
    \delta DV^{-1}  \neq \emptyset$, for $\gamma, \delta\in \Gamma$, then $\gamma\in  
\left(U\cap \Gamma\right)\delta$.}
\end{equation}
Since $\varphi$ is  $\cal Z(\mathfrak g_{\mathbb C})$--finite and $K$--finite on 
the right, there exists $\alpha\in C_c^\infty(G)$, $\supp{(\alpha)} \subset V$, such that 
$\varphi=\varphi\star \alpha$ (see Lemma \ref{cbr-4}). This can be rewritten as follows:
$$
\varphi(x)=\varphi\star \alpha(x)= \int_{G}
\varphi(xy^{-1})\alpha(y)dy=\int_{V^{-1}}
\varphi(xy)\alpha^\vee(y)dy,
$$
where as usual, we let $\alpha^\vee(y)=\alpha(y^{-1})$. Now, since we have the obvious equality of $\sup$--norms
$|\alpha^\vee |_\infty=|\alpha|_\infty$,  for $x\in D$ and $\gamma\in \Gamma$, we have the 
following:

\begin{align*}
|\varphi(\gamma x)|& \leq |\alpha|_\infty\cdot 
\int_{V^{-1}} |\varphi(\gamma xy)|dy=
|\alpha|_\infty\cdot \int_{\gamma x V^{-1}}
|\varphi(y)|dy \\ 
&\leq 
|\alpha|_\infty\cdot \int_{\gamma D V^{-1}}
|\varphi(y)|dy=
|\alpha|_\infty\cdot \int_G 1_{\gamma D V^{-1}}(y)|\varphi(y)|dy\\
&= |\alpha|_\infty\cdot 
\int_{U\cap \Gamma \backslash G} \left(\sum_{\delta\in U\cap \Gamma}  1_{\gamma D V^{-1}}(\delta y)|\varphi(\delta y)|
\right)dy\\
&= |\alpha|_\infty\cdot 
\int_{U\cap \Gamma \backslash G} |\varphi(y)| \left(\sum_{\delta\in U\cap \Gamma}  1_{\gamma D V^{-1}}(\delta y) \right)dy\\
&= |\alpha|_\infty\cdot 
\int_{U\cap \Gamma \backslash \left(U\cap \Gamma\right)\gamma D V^{-1}} 
|\varphi(y)| \left(\sum_{\delta\in U\cap \Gamma}  1_{\gamma D V^{-1}}(\delta y) \right)dy\\
&= |\alpha|_\infty\cdot 
\int_{U\cap \Gamma \backslash \left(U\cap \Gamma\right)\gamma D V^{-1}}  |\varphi(y)| \left(\sum_{\substack{\delta\in U\cap \Gamma\\
    \delta\in \gamma D V^{-1}y^{-1}}} 1 \right)dy\\
\end{align*}
\begin{align*}
&\le |\alpha|_\infty\cdot 
\int_{U\cap \Gamma \backslash \left(U\cap \Gamma\right)\gamma D V^{-1}}  |\varphi(y)| \left(\sum_{\substack{\delta\in U\cap \Gamma\\
\delta\in \gamma D V^{-1}VD^{-1}\gamma^{-1}}} 1 \right)dy\\
&\le |\alpha|_\infty\cdot 
\int_{U\cap \Gamma \backslash \left(U\cap \Gamma\right)\gamma D V^{-1}}  |\varphi(y)| \left(\sum_{\substack{\delta\in U\cap \Gamma\\
\delta\in \gamma W \gamma^{-1}}} 1 \right)dy\\
&=|\alpha|_\infty\cdot 
\int_{U\cap \Gamma \backslash \left(U\cap \Gamma\right)\gamma D V^{-1}}  |\varphi(y)|dy.\\
\end{align*}
The last equality holds since
$\delta\in \gamma W \gamma^{-1}$ implies that $\gamma^{-1}\delta\gamma\in W\in\Gamma=\{1\}$ by (\ref{cps-8}).
  
In view of (\ref{cps-9}) and the fact that $U\cap \Gamma$ is cocompact in $U$, we get
$$
 |\alpha|_\infty \cdot \sum_{\gamma\in U\cap\Gamma \backslash \Gamma}\int_{U\cap \Gamma \backslash \left(U\cap \Gamma\right)\gamma D V^{-1}}
|\varphi(y)|dy\le  |\alpha|_\infty\cdot 
\int_{U\cap \Gamma \backslash G}  |\varphi(y)|dy <\infty
$$
which  shows that the series converges absolutely and uniformly on $D$.

Since
$$
\int_{\Gamma \backslash G} \left(\sum_{\gamma\in  U\cap \Gamma \backslash \Gamma } 
\left|\varphi (\gamma x)\right| \right) dx= 
\int_{U\cap \Gamma\backslash G} \left|\varphi(x)\right|dx <\infty,
$$
we have  $W_{(\chi, U)}(\varphi) \in L^1(\Gamma\backslash G)$. The argument used in the proof of
(\cite{MuicMathAnn}, Theorem 3-10) shows that 
$\cal Z(\mathfrak g_{\mathbb C})$--finite and $K$--finite on the right. So, it is an
automorphic form by Lemma \ref{paf-1} (b).
\end{proof}

\vskip .2in

Now, we come to the main result of the present section.

\begin{Prop}\label{cps-10}  Let $\pi$ be an integrable discrete series of $G$. 
  Assume that there exists  a $K$--finite  matrix coefficient
  $\varphi$ of $\pi$ such that the following holds:
  $$
  W^\Gamma_{(\chi, U)}\left(\mathcal F_{(\chi, U)}^{loc}(\varphi)\right)\neq 0.
  $$
Then, there is a realization of  $\pi$  as a $(\chi, U)$--generic cuspidal  automorphic  representation.
\end{Prop}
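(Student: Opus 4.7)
The plan is to reduce the proposition to the non-vanishing of the Poincar\'e series $P(\varphi)$, by establishing the identity
\begin{equation*}
W^\Gamma_{(\chi, U)}\bigl(\mathcal{F}^{loc}_{(\chi, U)}(\varphi)\bigr)(x) = \mathcal{F}_{(\chi, U)}\bigl(P(\varphi)\bigr)(x), \qquad x \in G,
\end{equation*}
which is essentially the converse of the unfolding computation in Lemma \ref{cps-3}. Concretely, starting from
\begin{equation*}
W^\Gamma_{(\chi, U)}\bigl(\mathcal{F}^{loc}_{(\chi, U)}(\varphi)\bigr)(x) = \sum_{\gamma \in (U \cap \Gamma) \backslash \Gamma} \int_U \varphi(u\gamma x)\, \overline{\chi(u)}\, du,
\end{equation*}
I would fold each inner integral via $\int_U = \sum_{\delta \in U \cap \Gamma} \int_{(U \cap \Gamma) \backslash U}$ to obtain $\sum_\delta \int_{(U \cap \Gamma) \backslash U} \varphi(\delta v \gamma x)\, \overline{\chi(v)}\, dv$ (the triviality of $\chi$ on $U \cap \Gamma$ absorbs $\chi(\delta)$), then recombine the double sum $\sum_\gamma \sum_\delta$ into $\sum_{\gamma' \in \Gamma}$ through the bijection $(\delta,\gamma) \mapsto \delta\gamma$ from $(U\cap\Gamma)\times \bigl((U\cap\Gamma)\backslash\Gamma\bigr)$ onto $\Gamma$, arriving at $\int_{(U\cap\Gamma)\backslash U} P(\varphi)(vx)\,\overline{\chi(v)}\, dv = \mathcal{F}_{(\chi,U)}(P(\varphi))(x)$.

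Given the identity, the hypothesis forces $\mathcal{F}_{(\chi, U)}(P(\varphi)) \neq 0$, and in particular $P(\varphi) \neq 0$; by Lemma \ref{cps-2}(i), $P(\varphi) \in \mathcal{A}_{cusp}(\Gamma \backslash G)$. To upgrade this single cusp form to a realization of all of $\pi$, write $\varphi(x) = (\pi(x) h_1, h_2)$ with $h_1, h_2 \in \mathcal{H}_K$ and consider
\begin{equation*}
\Phi : \mathcal{H}_K \longrightarrow \mathcal{A}_{cusp}(\Gamma \backslash G), \qquad \Phi(h) := P(\varphi_h), \quad \varphi_h(x) := (\pi(x) h, h_2).
\end{equation*}
Because $R(g) \varphi_h = \varphi_{\pi(g) h}$ and $P$ commutes with right translations, $\Phi$ is $(\mathfrak{g}, K)$-equivariant, and it is non-zero since $\Phi(h_1) = P(\varphi) \neq 0$. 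Irreducibility of $\pi$ as a $(\mathfrak{g}, K)$-module then forces $\Phi$ to be injective, giving an embedded copy $W := \Phi(\mathcal{H}_K)$ of $\pi$ inside $\mathcal{A}_{cusp}(\Gamma \backslash G)$. Since $P(\varphi) \in W$ has $\mathcal{F}_{(\chi, U)}(P(\varphi)) \neq 0$, the realization $W$ is $(\chi, U)$-generic in the sense of Definition \ref{cps-1000}.

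The main technical point is justifying the Fubini-type exchange of sum and integral in the folding step. Absolute convergence reduces to finiteness of
\begin{equation*}
\int_{(U \cap \Gamma) \backslash U} \sum_{\gamma' \in \Gamma} |\varphi(\gamma' v x)|\, dv
\end{equation*}
for almost every $x$. This holds because $\varphi \in L^1(G)$ (integrability of $\pi$ plus $K$-finiteness), so $\Psi(y) := \sum_{\gamma'} |\varphi(\gamma' y)|$ lies in $L^1(\Gamma \backslash G)$ with norm $\|\varphi\|_1$; Tonelli applied to the double integral gives
\begin{equation*}
\int_{\Gamma \backslash G}\!\int_{(U\cap\Gamma)\backslash U} \Psi(vx)\, dv\, dx = \|\varphi\|_1 < \infty
\end{equation*}
(using $\operatorname{vol}((U\cap\Gamma)\backslash U) = 1$), so the inner integral is finite a.e.\ in $x$. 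Both sides of the key identity are smooth $\Gamma$-invariant functions, so agreement a.e.\ yields agreement everywhere. Once the identity is in place, the remainder of the argument is a clean application of Schur's lemma and the definition of $(\chi, U)$-genericity.
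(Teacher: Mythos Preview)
Your folding step contains a genuine error. After writing
\[
\sum_{\gamma\in (U\cap\Gamma)\backslash\Gamma}\;\sum_{\delta\in U\cap\Gamma}\;
\int_{(U\cap\Gamma)\backslash U}\varphi(\delta v\gamma x)\,\overline{\chi(v)}\,dv,
\]
you try to recombine via the bijection $(\delta,\gamma)\mapsto\gamma'=\delta\gamma$. But the integrand is $\varphi(\delta v\gamma x)$, with $v$ sitting \emph{between} $\delta$ and $\gamma$; it is not of the form $\varphi(\gamma' v x)$. Since a generic $\gamma\in\Gamma$ does not normalize $U$, there is no change of variable in $v$ that moves it past $\gamma$ while keeping the sum over $\Gamma$. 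In fact a direct unfolding of $\mathcal F_{(\chi,U)}(P(\varphi))$ gives
\[
\mathcal F_{(\chi,U)}(P(\varphi))(x)=\sum_{\gamma\in\Gamma/(U\cap\Gamma)}\int_U\varphi(\gamma u x)\,\overline{\chi(u)}\,du,
\]
a sum over \emph{right} cosets with $u$ to the right of $\gamma$, whereas $W^\Gamma_{(\chi,U)}\bigl(\mathcal F^{loc}_{(\chi,U)}(\varphi)\bigr)(x)$ is a sum over \emph{left} cosets with $u$ to the left of $\gamma$. These two expressions are genuinely different functions of $x$, so your key identity fails.

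This is exactly the obstacle the paper's proof is designed to bypass. Lemma~\ref{cps-3} shows that the two expressions \emph{do} coincide at $x=1$, provided one replaces $\varphi$ by $\overline{\varphi}^{\vee}$ and $\chi$ by $\overline{\chi}$ on one side; concretely
\[
\mathcal F_{(\chi,U)}\bigl(P(\overline{\varphi}^{\vee})\bigr)(1)
= W_{(\overline{\chi},U)}\bigl(\mathcal F^{loc}_{(\overline{\chi},U)}(\overline{\varphi})\bigr)(1).
\]
The paper then uses real-analyticity of $W_{(\overline{\chi},U)}(\psi)$: its non-vanishing forces some derivative $X^n\!\cdot W_{(\overline{\chi},U)}(\psi)(1)\neq 0$, and since $X^n$ commutes with $W_{(\overline{\chi},U)}$ and with $\mathcal F^{loc}_{(\overline{\chi},U)}$ (acting by $h\mapsto d\pi(X)^n h$ on the first argument of the matrix coefficient), this produces a \emph{new} $K$-finite matrix coefficient $\varphi'=\overline{\varphi}^{\vee}_{d\pi(X)^n h,h'}$ of $\pi$ with $\mathcal F_{(\chi,U)}(P(\varphi'))(1)\neq 0$. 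Your embedding argument via $h\mapsto P(\varphi_h)$ then applies verbatim to $\varphi'$. The essential missing ingredient in your attempt is this ``evaluate at $1$ and differentiate'' maneuver, which compensates for the failure of the pointwise identity you assumed.
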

\begin{proof} By the assumption 
  
\begin{itemize}
\item[(i)]  $\psi\overset{def}{=}\mathcal F_{(\overline{\chi}, U)}^{loc}(\overline{\varphi})\neq 0$, and 
\item[(ii)] $W^\Gamma_{(\overline{\chi}, U)}(\psi)\neq 0$.
\end{itemize}

By the comment in the paragraph containing (\ref{FC-1}), we may apply
Lemma \ref{cps-7}  to construct automorphic form 
$W_{(\overline{\chi}, U)}(\psi)$. In particular,  $W_{(\overline{\chi}, U)}(\psi)$ is real--analytic. So, we have the Taylor expansion 
$$
W_{(\overline{\chi}, U)}(\psi)\left( \exp{(X)}\right)=\sum_{n=0}^\infty \frac{1}{n!} X^n.
W_{(\overline{\chi}, U)}(\psi)\left(1\right), \ \ 
$$
where $X$ belongs to a convenient neighborhood $\mathcal V$ of $0$ in $\mathfrak g$. Since $G$ is assumed to be connected,
$W_{(\overline{\chi}, U)}(\psi)\neq  0$ if and only if 
there exists  $X\in \mathcal V$, and $n\ge 1$ such that  
\begin{equation}\label{cps-11}
 X^n.W_{(\overline{\chi}, U)}(\psi)\left(1\right)\neq 0.
\end{equation}

Let us fix $X\in \mathfrak g$, and $n\ge 1$ satisfying (\ref{cps-11}), and make this condition explicit.
 Let us write $(\ , \ )$ for the invariant inner product on the Hilbert space $\mathcal H$ 
realizing the representation $\pi$. Let us write $\mathcal H_K$ for the set of $K$--finite vectors in $\mathcal H$. 

For $v, v'\in \mathcal H_K$, we let  
$$
\varphi_{v, v'}(g)=(\pi(g)v, \ v'), \ \ g\in G.
$$
Then
$$
\overline{\varphi}_{v, v'}(g)=(v', \ \pi(g)v), \ \ g\in G.
$$
Furthermore, for $X\in \mathfrak g$, $g\in G$, we have 
\begin{equation}\label{cps-100000}
X.\overline{\varphi}_{v, v'}(g)=\frac{d}{dt}|_{t=0} (v', \ \pi(g)\pi(\exp{(tX)})v)=
(v', \ \pi(g)d\pi(X)v)=\overline{\varphi}_{ d\pi(X)v, v'}(g).
\end{equation}
We also let (see (\ref{FC-1}))
$$
\psi_{v, v'}(g)=\mathcal F_{(\overline{\chi}, U)}^{loc}(\overline{\varphi}_{v, v'})(g)=
\int_U\overline{\varphi}_{v, v'}(ug)\chi(u)du, \ \ g\in G.
$$
Then, by (\ref{cps-100000}) and the discussion in the proof of Lemma\ref{cps-5},  we have 
$$
\psi_{ d\pi(X)v, v'}=X.\psi_{v, v'}, \ \ v, v'\in \mathcal H_K, \ X\in \mathfrak g.
$$
Next, by our assumption, there exists $h, h'\in \mathcal H_K$ such that 
$$
\varphi=\varphi_{h, h'}.
$$
Then, (\ref{cps-11}) implies
\begin{align*}
 W_{(\overline{\chi}, U)}(\psi_{d\pi(X)^nh, h'})\left(1\right)&= W_{(\overline{\chi}, U)}(X^n.\psi_{h, h'})\left(1\right)\\
&=X^n.W_{(\overline{\chi}, U)}(\psi_{h, h'})\left(1\right)\\
&=X^n.W_{(\overline{\chi}, U)}(\psi)\left(1\right)\neq 0.
\end{align*}
We remark that one can see that $W_{(\overline{\chi}, U)}$ commutes with the action of $\mathcal U(\mathfrak g_{\mathbb C})$  as in the
proof of (\cite{MuicMathAnn}, Theorem 3-10). Thus, by Lemma \ref{cps-3}, we have 

\begin{align*}
\mathcal  F_{(\chi, U)} \left(P(\overline{\varphi}^{\vee}_{d\pi(X)^nh, h'})\right)(1) &=W_{(\overline{\chi}, U)}\left(
\mathcal F^{loc}_{(\overline{\chi}, U)}(\overline{\varphi}_{d\pi(X)^nh, h'})\right)(1)\\
&=
 W_{(\overline{\chi}, U)}(\psi_{d\pi(X)^nh, h'})\left(1\right)\neq 0.
\end{align*}
But
$$
\overline{\varphi}^{\vee}_{d\pi(X)^nh, h'}(x)=\overline{(\pi(x^{-1})d\pi(X)^nh, h')}=(\pi(x)h', d\pi(X)^nh),
$$
is a $K$--finite matrix coefficient of $\pi$. Now, we use Lemma \ref{cps-2} to complete the proof.
\end{proof}

\vskip .2in
In the case of $U=N$ and $\chi$ is a generic character, the condition (i) is fulfilled  by at least one matrix coefficient
by Corollary  \ref{gwm-6}.

\vskip .2in
We end the following section with the following proposition.

\begin{Prop}\label{cps-100} We assume that
  $U\cap \Gamma \backslash U$ is Abelian. Assume that $\pi$ is an integrable discrete series of $G$. Then, if there exists a
  $K$--finite matrix coefficient $\varphi$ of $\pi$ such that $P(\varphi)\neq 0$, then $\pi$ is infinitesimally equivalent
  to a closed subrepresentation of $I^1(G, U, \chi)$ for some $\chi\in \widehat{\left(U\cap \Gamma\backslash U\right)}$.
  \end{Prop}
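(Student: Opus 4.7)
The plan is to Fourier-decompose $P(\varphi)$ along the compact Abelian quotient $U\cap\Gamma\backslash U$ and then to adapt Corollary \ref{gwm-7}. First, pick $x_0\in G$ with $P(\varphi)(x_0)\ne 0$. The function $u\mapsto P(\varphi)(ux_0)$ is smooth, $U\cap\Gamma$-left-invariant, and non-zero on the compact Abelian group $U\cap\Gamma\backslash U$, so standard Fourier analysis produces a character $\chi\in\widehat{U\cap\Gamma\backslash U}$ with $\mathcal F_{(\chi,U)}(P(\varphi))(x_0)\ne 0$. Extending $\chi$ trivially across $U\cap\Gamma$ gives a unitary character of $U$ of the type considered in Section \ref{cbr}.

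Next I would apply Lemma \ref{cps-3} and substitute $u\mapsto u^{-1}$ inside each integral on the right-hand side to rewrite
$$
\mathcal F_{(\chi,U)}(P(\varphi))(x)=\sum_{\gamma\in U\cap\Gamma\backslash\Gamma}\mathcal F^{loc}_{(\chi,U)}(l(\gamma)\varphi)(x).
$$
Since the left-hand side is a non-zero function of $x$, at least one summand is non-zero: fix $\gamma_0\in\Gamma$ with $\mathcal F^{loc}_{(\chi,U)}(l(\gamma_0)\varphi)\ne 0$. Writing $\varphi=\varphi_{h,h'}$ with $h,h'\in\mathcal H_K$ as in the proof of Proposition \ref{cps-10}, a direct computation using the unitarity of $\pi$ gives $l(\gamma_0)\varphi=\varphi_{h,h''}$ with $h'':=\pi(\gamma_0)h'\in\mathcal H^\infty$, a smooth vector that is in general no longer $K$-finite.

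This is the main obstacle: Corollary \ref{gwm-7} is phrased for $K$-finite matrix coefficients, whereas $\varphi_{h,h''}$ is $K$-finite only on the right. I would overcome it by adapting the proof of Corollary \ref{gwm-7}, keeping $h''$ fixed and varying the first argument. For each $h_1\in\mathcal H_K$, the matrix coefficient $\varphi_{h_1,h''}=l(\gamma_0)\varphi_{h_1,h'}$ is a left translate of a $K$-finite (hence $L^1$) matrix coefficient of $\pi$; since $G$ is unimodular, left translation preserves membership in $L^1(G)$, right $K$-finiteness, and $\mathcal Z(\mathfrak g_{\mathbb C})$-finiteness, so Lemma \ref{cps-5} gives a well-defined $(\mathfrak g,K)$-equivariant map
$$
\Psi\colon\mathcal H_K\longrightarrow I^1(G,U,\chi),\qquad h_1\longmapsto\mathcal F^{loc}_{(\chi,U)}(\varphi_{h_1,h''}).
$$
By construction $\Psi(h)\ne 0$, so irreducibility of $\mathcal H_K$ as a $(\mathfrak g,K)$-module forces $\Psi$ to be injective.

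Finally, let $V$ be the closure of $\Psi(\mathcal H_K)$ in the Banach space $I^1(G,U,\chi)$; it is closed and $G$-invariant. By Lemma \ref{cbr-6}, the $(\mathfrak g,K)$-module of $K$-finite vectors in $V$ is admissible of finite length, and since it contains the irreducible submodule $\Psi(\mathcal H_K)$, it must coincide with it. Hence $V$ is a closed subrepresentation of $I^1(G,U,\chi)$ infinitesimally equivalent to $\pi$, as required.
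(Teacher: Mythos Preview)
Your approach is correct in outline but takes a different route from the paper, and one step needs a corrected justification.

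The paper avoids your ``main obstacle'' entirely. Rather than isolating a single term indexed by $\gamma_0$ (which forces the left translation $l(\gamma_0)\varphi$ and destroys left $K$-finiteness), it exploits real-analyticity: since $\mathcal F_{(\chi,U)}(P(\varphi))$ is real-analytic and nonzero on the connected group $G$, some derivative at the identity is nonzero, say $X^n.\mathcal F_{(\chi,U)}(P(\varphi))(1)\ne 0$. Commuting $X^n$ through $P$ and $\mathcal F_{(\chi,U)}$ gives $\mathcal F_{(\chi,U)}(P(X^n.\varphi))(1)\ne 0$, and via Lemma \ref{cps-3} this forces the function $\mathcal F^{loc}_{(\overline\chi,U)}\bigl((X^n.\varphi)^\vee\bigr)$ to be nonzero. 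But $\overline{(X^n.\varphi)}^\vee=\varphi_{h',\,d\pi(X)^n h}$ is again a two-sided $K$-finite matrix coefficient of $\pi$, because $\mathcal H_K$ is stable under $d\pi(X)$; hence Corollary \ref{gwm-7} applies directly. Your detour through $h''=\pi(\gamma_0)h'$ works, but it amounts to reproving a variant of Corollary \ref{gwm-7} for a merely smooth second vector; the paper's differentiation trick replaces that with a one-line reduction to the existing corollary.

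The justification in your final paragraph is not valid as written: a finite-length $(\mathfrak g,K)$-module can properly contain an irreducible submodule, so ``$V_K$ has finite length and contains $\Psi(\mathcal H_K)$, hence equals it'' does not follow. The correct argument is that $\Psi(\mathcal H_K)$ itself is admissible (apply Lemma \ref{cbr-6} to any single nonzero $\Psi(h_1)$, which generates all of $\Psi(\mathcal H_K)$ by irreducibility); then for each $K$-type $\delta$ the isotypic component $\Psi(\mathcal H_K)_\delta$ is finite-dimensional, hence closed in $I^1(G,U,\chi)$, and therefore $V_\delta=\Psi(\mathcal H_K)_\delta$. This gives $V_K=\Psi(\mathcal H_K)$ and the desired infinitesimal equivalence.
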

\begin{proof} Since $U\cap \Gamma\backslash U$ is a compact Abelian group, we can compute the Fourier expansion of $P(\varphi)$.
We recall that we have normalized the Haar measure on $U\cap \Gamma\backslash U$ such that the total volume is equal to one.
In $L^2\left(U\cap \Gamma\backslash U\right)$, we have the following Fourier expansion:
  $$
P(\varphi)(ux)=\sum_{\chi\in \widehat{\left(U\cap \Gamma\backslash U\right)}}c_\chi\left(P(\varphi)\right)(x)\chi(u), \ \ x\in G,
  $$
where the Fourier coefficients are given by
$$
c_\chi\left(P(\varphi)\right)(x)=\int_{U\cap \Gamma\backslash U} P(\varphi)(ux)\overline{\chi(u)} du=
\mathcal F_{(\chi, U)}  \left(P(\varphi)\right)(x).
$$

Now, if $P(\varphi)\neq 0$, then the Fourier coefficient $\mathcal F_{(\chi, U)}  \left(P(\varphi)\right)\neq 0$, for some
$\chi\in \widehat{\left(U\cap \Gamma\backslash U\right)}$.
But this Fourier coefficient is real--analytic function of $x\in G$. Hence, since $G$ is connected, as in the proof
of Proposition \ref{cps-10}, there  exists $X\in \mathfrak g$, and $n\ge 1$ such that
$$
X^n.\mathcal F_{(\chi, U)}\left(P(\varphi)\right)(1)\neq 0. 
$$
We can write this as follows:
$$
\mathcal F_{(\chi, U)}\left(P(X^n.\varphi)\right)(1)=X^n.\mathcal F_{(\chi, U)}\left(P(\varphi)\right)(1)\neq 0.
$$
Using Lemma \ref{cps-3}, this can be written as follows:
$$
W_{(\overline{\chi}, U)}\left(\mathcal F^{loc}_{(\overline{\chi}, U)}\left(\left(X^n.\varphi\right)^\vee\right)\right)(1)=
\mathcal F_{(\chi, U)}\left(P(X^n.\varphi)\right)(1)
\neq 0.
$$
This implies 
$$
\mathcal F^{loc}_{(\chi, U)}\left(\overline{\left(X^n.\varphi\right)}^\vee\right)\neq 0.
$$
is not equal to zero almost everywhere. As in the proof of Proposition \ref{cps-10}, we can check that
$\overline{\left(X^n.\varphi\right)}^\vee$ is a  $K$--finite matrix coefficient of $\pi$. Hence,
$\pi$ is infinitesimally equivalent to a closed subrepresentation of $I^1(G, U, \chi)$ by Corollary \ref{gwm-7}. 
\end{proof}

\section{Applications}\label{app}

We maintain the assumptions of Sections \ref{paf} and \ref{cps} (see the first paragraphs in those sections). 
We need to study when  $W_{(\chi, U)}(\varphi)\neq 0$ for $\varphi\in C^\infty(G)\cap I^1(G, U, \chi)$ 
 which is  $\cal Z(\mathfrak g_{\mathbb C})$--finite and $K$--finite on 
the right (see  Lemma \ref{cps-7}).
In this case the criterion Lemma \ref{cps-2} (ii) (see (\cite{MuicMathAnn}, Theorem 4-1)) is not applicable here.
We use a generalization of (\cite{MuicMathAnn}, Theorem 4-1)  given by (\cite{MuicIJNT}, Lemma 2-1):

\begin{Lem}
\label{nv-thm} Let $G$ be a locally compact
  unimodular   (Hausdorff) topological group.
Let $\Gamma\subset G$ be its discrete subgroup and
$\Gamma_1\subset \Gamma$ an arbitrary subgroup. 
We let $\eta:\Gamma\rightarrow \bbC^\times$ be an unitary character of 
$\Gamma$ trivial on $\Gamma_1$. 
Let $\varphi\in L^1(\Gamma_1\backslash G)$.   Assume that there exists
a subgroup $\Gamma_2\subset \Gamma$ such that $\Gamma_1$ is normal
subgroup of $\Gamma_2$ of finite index and  there exists a compact set $C$
in $G$ such that the following conditions hold:
\begin{itemize}
\item[(1)] $\varphi(\gamma g)=\overline{\eta(\gamma)}\varphi(g)$, for all
  $\gamma\in \Gamma_2$ and almost all $g\in G$, 
\item[(2)]  $\Gamma \cap C\cdot C^{-1}\subset \Gamma_2$, and  
\item[(3)] $\int_{\Gamma_1\backslash \Gamma_1\cdot C}
\left|\varphi(g)\right|dg > \frac{1}{2}
\int_{\Gamma_1\backslash G} \left|\varphi(g)\right|dg$. 
\end{itemize}
Then the Poincar\' e series defined by 
$$
P^{(\chi)}_{\Gamma_1\backslash\Gamma}(\varphi)(g)\overset{def}{=}
\sum_{\gamma\in \Gamma_1\backslash\Gamma}\eta(\gamma) \varphi(\gamma\cdot
g)
$$ 
converges absolutely almost everywhere to a non--zero element of  $L^1(\Gamma \backslash G)$.
\end{Lem}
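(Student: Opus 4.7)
This is a generalization of the classical non-vanishing criterion for Poincar\'e series, and I would prove it in two stages.

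\emph{Stage 1: absolute convergence and the $L^1$ bound.} The standard unfolding gives
$$
\int_{\Gamma\backslash G}\sum_{\gamma\in\Gamma_1\backslash\Gamma}|\varphi(\gamma g)|\, dg = \int_{\Gamma_1\backslash G}|\varphi(g)|\, dg <\infty,
$$
so the series defining $P^{(\chi)}_{\Gamma_1\backslash\Gamma}(\varphi)$ converges absolutely almost everywhere, and the resulting function lies in $L^1(\Gamma\backslash G)$ with norm at most $\|\varphi\|_{L^1(\Gamma_1\backslash G)}$.

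\emph{Stage 2: non-vanishing.} I would first use condition (1) and the normality $\Gamma_1\lhd\Gamma_2$: parametrizing $\Gamma_1\backslash\Gamma$ via the finitely many $\Gamma_1\backslash\Gamma_2$-cosets inside each class of $\Gamma_2\backslash\Gamma$, the transformation rule in (1) forces each of these $[\Gamma_2:\Gamma_1]$ terms to contribute $|\eta(\gamma)|^2\varphi(g)=\varphi(g)$, so that
$$
P^{(\chi)}_{\Gamma_1\backslash\Gamma}(\varphi)(g)=[\Gamma_2:\Gamma_1]\,\varphi(g)+R(g),\qquad R(g)=[\Gamma_2:\Gamma_1]\sum_{\delta\in(\Gamma_2\backslash\Gamma)\setminus\{e\}}\eta(\delta)\varphi(\delta g).
$$
Since $|\varphi|$ is $\Gamma_1$-invariant, picking a fundamental domain $F\subset \Gamma_1 C$ for the $\Gamma_1$-action on $\Gamma_1 C$ and applying the reverse triangle inequality gives
$$
\int_{\Gamma_1\backslash\Gamma_1 C}\bigl|P^{(\chi)}_{\Gamma_1\backslash\Gamma}(\varphi)(g)\bigr|\, dg\ \geq\ [\Gamma_2:\Gamma_1]\int_{\Gamma_1\backslash\Gamma_1 C}|\varphi(g)|\, dg\ -\ \int_F|R(g)|\, dg.
$$
By (3), the first term on the right exceeds $\tfrac{1}{2}[\Gamma_2:\Gamma_1]\|\varphi\|_{L^1(\Gamma_1\backslash G)}$, so it suffices to show $\int_F|R(g)|\, dg$ is no larger than that.

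The geometric heart of the argument is the bound on $\int_F|R|\, dg$. Dominating $|R|$ term-by-term and changing variables $h=\delta g$ reduces the task to proving
$$
\sum_{\delta\in(\Gamma_2\backslash\Gamma)\setminus\{e\}}\int_{\delta F}|\varphi(h)|\, dh\ \leq\ \int_{\Gamma_1\backslash(G\setminus\Gamma_1 C)}|\varphi(h)|\, dh,
$$
after which a second application of (3) finishes the proof. Condition (2) enters twice here: first, for $\delta\notin\Gamma_2$ and $g\in\Gamma_1 C$ one checks that $\delta g\notin\Gamma_1 C$ (else a resulting element of $\Gamma\cap CC^{-1}$, combined with $\Gamma_1\lhd\Gamma_2$, forces $\delta\in\Gamma_2$), so each $\delta F$ lies in $G\setminus\Gamma_1 C$; second, an analogous argument, combined with the reindexing $\Gamma_2\backslash\Gamma\leftrightarrow\Gamma/\Gamma_2$ via $\Gamma_2\delta\leftrightarrow\delta^{-1}\Gamma_2$, yields pairwise disjointness of the translates $\delta F$, and the normality $\Gamma_1\lhd\Gamma_2$ ensures this disjointness descends to $\Gamma_1\backslash G$. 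This disjointness and multiplicity bookkeeping is the main obstacle, because (2) naturally produces disjointness in terms of \emph{left} cosets of $\Gamma_2$, while the series is indexed by \emph{right} cosets of $\Gamma_1$; the interplay with the finite-index normality $\Gamma_1\lhd\Gamma_2$ is precisely what reconciles the two.
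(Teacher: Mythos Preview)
The paper does not prove this lemma; it is quoted verbatim from \cite{MuicIJNT} (Lemma~2--1) and used as a black box. So there is no ``paper's own proof'' to compare against, and your sketch has to be judged on its own.

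Your two-stage outline is the standard one, and the target inequality you isolate,
\[
\sum_{\delta\in(\Gamma_2\backslash\Gamma)\setminus\{e\}}\int_{\delta F}|\varphi|\ \le\ \int_{\Gamma_1\backslash(G\setminus\Gamma_1 C)}|\varphi|,
\]
is exactly the right thing to aim for. Your Claim~1 (that $\delta F\subset G\setminus\Gamma_1 C$ for $\delta\notin\Gamma_2$) is correct and your argument for it is fine. The one place I would ask you to tighten is the sentence ``the normality $\Gamma_1\lhd\Gamma_2$ ensures this disjointness descends to $\Gamma_1\backslash G$.'' Since $\Gamma_1$ is normal only in $\Gamma_2$, not in $\Gamma$, a translate $\delta F$ with $\delta\notin\Gamma_2$ need not be a $\Gamma_1$-fundamental domain for its $\Gamma_1$-saturation, so literal disjointness of the images in $\Gamma_1\backslash G$ is not obvious. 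What actually works (and is likely what you have in mind under ``multiplicity bookkeeping'') is the following: set $\Phi(g)=\sum_{\delta\in\Gamma_2\backslash\Gamma}|\varphi(\delta g)|$, a $\Gamma$-invariant function, and bound
\[
\int_F\Phi=\int_{\Gamma\backslash G}\Phi(g)\,M_F(g)\,dg,\qquad M_F(g)=\#\{\gamma\in\Gamma:\gamma g\in F\}.
\]
Condition~(2) forces all such $\gamma$ into a single right $\Gamma_2$-coset, and then the fundamental-domain property of $F$ together with $\Gamma_1\lhd\Gamma_2$ gives at most one $\gamma$ per $\Gamma_2/\Gamma_1$-class, whence $M_F\le[\Gamma_2:\Gamma_1]$. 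This yields $\int_F\Phi\le[\Gamma_2:\Gamma_1]\int_{\Gamma\backslash G}\Phi=\|\varphi\|_{L^1(\Gamma_1\backslash G)}$, which is your displayed inequality after subtracting the $\delta=e$ term. With that adjustment your proof is complete and correct.
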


In our case, the lemma immediately implies
the following result:
 
\begin{Lem} \label{cps-12}  Assume  that
$\varphi\in C^\infty(G)\cap I^1(G, U, \chi)$ is  
$\cal Z(\mathfrak g_{\mathbb C})$--finite and $K$--finite on 
the right. Then, the series (see Lemma \ref{cps-7}) 
$$
W_{(\chi, U)}(\varphi)(x)=\sum_{\gamma \in U\cap \Gamma\backslash \Gamma} \varphi(\gamma x)
$$
is non--zero provided that there exists a compact set $C\subset G$ such that 
\begin{itemize}
\item[(1)]  $\Gamma \cap C\cdot C^{-1}\subset  U\cap \Gamma$,  and  
\item[(2)] $\int_{\left( U\cap \Gamma\right)\backslash \left( U\cap \Gamma\right)\cdot C}
\left| \varphi( g)\right|dg > \frac{1}{2}
\int_{\left( U\cap \Gamma\right)\backslash G}    \left| \varphi(g)\right|dg$. 
\end{itemize}
\end{Lem}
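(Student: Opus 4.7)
The plan is to derive this as a direct application of the general non-vanishing criterion Lemma \ref{nv-thm}, specialized to $\Gamma_1 = \Gamma_2 = U\cap\Gamma$ and the trivial character $\eta \equiv 1$. With those choices, the series $P^{(\eta)}_{\Gamma_1\backslash\Gamma}(\varphi)$ of Lemma \ref{nv-thm} is literally our $W_{(\chi, U)}(\varphi)$, and the requirement that $\Gamma_1$ be a normal finite-index subgroup of $\Gamma_2$ is trivial.

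First I would verify the ambient integrability hypothesis $\varphi \in L^1((U\cap\Gamma)\backslash G)$. The assumption $\varphi \in I^1(G, U, \chi)$ together with unitarity of $\chi$ shows that $|\varphi|$ is left $U$-invariant, so it descends to $U\backslash G$. Because $U\cap\Gamma$ is cocompact in $U$ (a standing assumption of Section \ref{cps}), integrating in stages gives
$$\int_{(U\cap\Gamma)\backslash G} |\varphi(g)|\, dg = \mathrm{vol}\left((U\cap\Gamma)\backslash U\right)\cdot ||\varphi||_{U,1} < \infty.$$

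Next I would check condition (1) of Lemma \ref{nv-thm}: the equivariance $\varphi(\gamma g) = \varphi(g)$ for $\gamma \in U\cap\Gamma$. Since $\varphi \in C^\infty(G) \cap I^1(G, U, \chi)$, the rule $\varphi(ux) = \chi(u)\varphi(x)$ holds pointwise for all $u \in U$, $x \in G$; specializing to $u = \gamma \in U\cap\Gamma$ and invoking the standing assumption $\chi|_{U\cap\Gamma} \equiv 1$ yields the claim. Conditions (2) and (3) of Lemma \ref{nv-thm} then match, verbatim, the hypotheses (1) and (2) of the present lemma.

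Applying Lemma \ref{nv-thm} gives that $W_{(\chi, U)}(\varphi)$ represents a non-zero class in $L^1(\Gamma\backslash G)$; combined with the smoothness assertion of Lemma \ref{cps-7}, this forces $W_{(\chi, U)}(\varphi)\not\equiv 0$ as a smooth function. There is essentially no obstacle beyond pattern-matching: the only genuine content is the translation between integrability in the $I^1(G, U, \chi)$ sense (an integral over $U\backslash G$) and integrability over $(U\cap\Gamma)\backslash G$, which is handled by cocompactness of $U\cap\Gamma$ in $U$.
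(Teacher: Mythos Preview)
Your proposal is correct and follows exactly the paper's approach: the paper's proof consists of the single observation that $\varphi\in L^1((U\cap\Gamma)\backslash G)$ (by cocompactness of $U\cap\Gamma$ in $U$ and triviality of $\chi$ on $U\cap\Gamma$) followed by an appeal to Lemma~\ref{nv-thm}. You have simply made the specialization $\Gamma_1=\Gamma_2=U\cap\Gamma$, $\eta\equiv 1$ and the verification of the hypotheses explicit.
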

\begin{proof} Since $U\cap\Gamma$ is cocompact in $U$ and $\chi$ is trivial on $U\cap \Gamma$, we see that
  $\varphi\in L^1(U\cap\Gamma\backslash G)$. Now, we apply Lemma \ref{nv-thm}.
\end{proof}

\vskip .2in
We now introduce some congruence subgroups. Let $\mathbb A$ (resp., $\mathbb A_f$) be the ring of adeles (resp., finite adeles) of
$\mathbb Q$.  For each prime $p$, let 
$\mathbb Z_p$ be the maximal compact subring of $\mathbb Q_p$. Then, for almost all primes $p$, the group $\mathcal G$
is unramified over  $\mathbb Q_p$; in particular, $\mathcal G$ is a group scheme over $\mathbb Z_p$, and   $\mathcal G(\bbZ_p)$
is a hyperspecial maximal compact subgroup of  $\mathcal G(\mathbb Q_p)$ (\cite{Tits}, 3.9.1).  Let $\mathcal G(\bbA_f)$
be the restricted product of all groups $G(\mathbb Q_p)$ with respect to the groups $\mathcal G(\mathbb Z_p)$ where $p$ ranges
over all primes $p$ such that $\mathcal G$ is unramified over $\mathbb Q_p$:
$$
\mathcal G(\mathbb A_f)=\prod'_p \mathcal G(\mathbb Q_p).
$$
Note that
$$
\mathcal G(\mathbb A)= \mathcal G(\mathbb R)\times \mathcal G(\mathbb A_f).$$
The group $\mathcal G(\mathbb Q)$ is embedded into $\mathcal G(\mathbb R)$ and
$\mathcal G(\mathbb Q_p)$. It is embedded diagonally in $\mathcal G(\mathbb A_f)$ and
in $\mathcal G(\mathbb A)$.

\vskip .2in
The congruence subgroups of $\mathcal G$ are defined as follows (see \cite{BJ}). Let $L\subset \mathcal G(\mathbb A_f)$ be an
open compact subgroup. Then, considering $\mathcal G(\mathbb Q)$ diagonally embedded in $\mathcal G(\mathbb A_f)$, we may
consider the intersection

$$
\Gamma_L=L\cap \mathcal G(\mathbb Q).
$$
Now, we consider $\mathcal G(\mathbb Q)$ as subgroup of $G=\mathcal G(\mathbb R)$. One easily show that the group
$\Gamma_L$ is discrete in $G$ and it has a finite covolume. The group $\Gamma_L$ is called a congruence subgroup attached to $L$.

\vskip .2in
We introduce two families of examples of congruence groups (which depend on an embedding over $\mathbb Q$ of $\mathcal G$ into
some $SL_M$).  So, in order to define a family of principal congruence subgroups
$\mathcal G$, we fix a  an embedding over $\mathbb Q$
\begin{equation}\label{int-0a}
\mathcal G \hookrightarrow SL_M
\end{equation}
with a image Zariski closed in $SL_M$. Then there exists $N\ge 1$ such that 
\begin{equation}\label{int-0b}
\text{$\mathcal G$ is a group scheme over $\bbZ[1/N]$ and the embedding  (\ref{int-0a}) is  defined over $\bbZ[1/N]$.}
\end{equation} 
We fix such $N$.

As usual, we let $\mathcal G_\bbZ=\mathcal G(\mathbb Q)\cap SL_M(\mathbb Z)$, and $\mathcal G_{\mathbb Z_p}=
\mathcal G(\mathbb Q_p)\cap SL_M(\mathbb Z_p)$ for all prime numbers $p$. We remark that 
$\mathcal G$ is a group scheme over $\mathbb Z_p$ and the embedding  (\ref{int-0a}) is  defined over $\mathbb Z_p$ when
$p$ does not divide  $N$. Then $\mathcal G_{\mathbb Z_p}=\mathcal G(\mathbb Z_p)$ but
$\mathcal G$ may not be unramified over such $p$.  In general, 
$\mathcal G_{\mathbb Z_p}$ is just an open compact subgroup of $\mathcal G(\mathbb Q_p)$.

Now, we are ready to define the principal congruence subgroups with respect to the embedding (\ref{int-0a})
\begin{equation}\label{int-0c}
\Gamma(n)=\{x=(x_{i,j})\in \mathcal G_{\mathbb Z}: \ \ x_{i,j}\equiv \delta_{i,j} \ (mod \ n)\}, \ \ n\ge 1.
\end{equation}
Obviously, $\Gamma(1)=\mathcal G_{\mathbb Z}$, and when $n | m$ we have $\Gamma(m)\subset \Gamma(n)$. One easily check that
$\Gamma(n)$ correspond to an open compact subgroup $L(n)$ defined as follows. For each prime number $p$ and $l\ge 1$,
we define the open--compact subgroup
$$
L(p^l)= \mathcal G(\mathbb Q_p)\cap \{x=(x_{ij})\in SL_M(\mathbb Z_p);  \ \ x_{i,j}- \delta_{i,j} \in p^l \mathbb Z_p\}.
$$
We decompose $n$ into primes numbers $n=p_1^{l_1}\cdots p_t^{l_t}$. Then, one easily sees that
$$
\Gamma(n)=\left(L(p_1^{l_1})\times \cdots \times L(p_t^{l_t})\times \prod_{p\not\in \{p_1, \ldots, p_t\}}
\mathcal G_{\mathbb Z_p}\right)\cap \mathcal G(\mathbb Q).
$$

We also define Hecke congruence subgroups which are of a rather different nature than principal congruence subgroups. 
\begin{equation}\label{int-0c-1}
\Gamma_1(n)=\{x=(x_{i,j})\in \mathcal G_{\mathbb Z}: \ \ x_{i,j}\equiv \delta_{ij}  \ (mod \ n) \ \text{for $i\ge  j$}\}, \ \ n\ge 1.
\end{equation}
Obviously, $\Gamma_1(1)=\mathcal G_{\mathbb Z}$, and when $n | m$ we have $\Gamma_1(m)\subset \Gamma_1(n)$.
We leave as an exercise to the reader to write down corresponding open--compact subgroups.

\vskip .2in 
We do not want to write down an exhaustive list of all applications but to give some typical results.  First, we prove the following theorem:

\begin{Thm} \label{cps-13} 
We fix an embedding $\mathcal G\hookrightarrow SL_M$ over $\mathbb Q$, and define 
Hecke congruence subgroups $\Gamma_1(n)$,
  $n\ge 1,$ using that embedding. Assume that $U$ is  a  subgroup of all upper triangular unipotent matrices in $G$ considered as $G 
\subset SL_M(\mathbb R)$. Let 
  $\chi$ be a unitary character $U\longrightarrow  \mathbb C^{\times}$ trivial on $U\cap  \Gamma_1(l)$ for some $l\ge 1$.
Let $\pi$ be an integrable discrete series of $G$  such that
 there exists  a  $K$--finite  matrix coefficient $\varphi$ of $\pi$ such that $\mathcal F_{(\chi, U)}^{loc}(\varphi)\neq 0$.
Then, there exists $n_0\ge 1$ such that for $n\ge n_0$ we have
 a realization of  $\pi$  as a $(\chi, U)$-generic cuspidal automorphic 
representation for $\Gamma_1(ln)$.
 \end{Thm}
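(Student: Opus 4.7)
\emph{Strategy.} Set $\psi = \mathcal{F}^{loc}_{(\chi,U)}(\varphi)$. By Lemma \ref{cps-5} and Corollary \ref{cbr-5}, $\psi$ (modified on a null set) lies in $C^\infty(G) \cap I^1(G, U, \chi)$, is $\mathcal{Z}(\mathfrak{g}_\mathbb{C})$-finite and $K$-finite on the right, and is nonzero by hypothesis. Via Proposition \ref{cps-10}, it suffices to show that $W^{\Gamma_1(ln)}_{(\chi,U)}(\psi) \neq 0$ for all sufficiently large $n$. I verify this through Lemma \ref{cps-12}: I must produce a compact $C \subset G$ satisfying (1) $\Gamma_1(ln) \cap C \cdot C^{-1} \subset U \cap \Gamma_1(ln)$ and (2) $\int_{(U \cap \Gamma_1(ln)) \backslash (U \cap \Gamma_1(ln))\cdot C} |\psi| > \tfrac{1}{2} \int_{(U \cap \Gamma_1(ln)) \backslash G} |\psi|$.

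\emph{Key observation and choice of $C$.} As noted in the remark following the theorem, $U \cap \Gamma_1(ln) = U \cap \mathcal{G}_\mathbb{Z}$ independently of $n$, since every element of $U$ already has $\delta_{ij}$ in position $(i,j)$ for $i \ge j$ and so satisfies the Hecke condition trivially. Writing $v = \mathrm{vol}((U \cap \mathcal{G}_\mathbb{Z}) \backslash U)$, the right-hand side of (2) equals $\tfrac{v}{2} \|\psi\|_{U,1}$, a positive constant independent of $n$. Since $|\psi| \in L^1(U \backslash G)$, I pick a compact $K_0 \subset U \backslash G$ with $\int_{K_0} |\psi| > \tfrac{1}{2} \|\psi\|_{U,1}$, lift it to a compact $C_0 \subset G$ with $\pi_U(C_0) \supseteq K_0$, and set $C = \mathcal{F} \cdot C_0$ where $\mathcal{F} \subset U$ is a compact fundamental domain for $(U \cap \mathcal{G}_\mathbb{Z}) \backslash U$. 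Since $(U \cap \mathcal{G}_\mathbb{Z}) \cdot C = U \cdot C_0$ and $|\psi|$ is left $U$-invariant, an unfolding reduces the left-hand side of (2) to $v \int_{\pi_U(C_0)} |\psi|$, which exceeds $\tfrac{v}{2} \|\psi\|_{U,1}$. Thus (2) holds, uniformly in $n$.

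\emph{Verification of (1) and conclusion.} The compact set $C \cdot C^{-1}$ has all matrix entries bounded by some $R > 0$ when viewed inside $\mathrm{Mat}_M(\mathbb{R})$; any $\gamma \in \Gamma_1(ln) \cap C \cdot C^{-1}$ then has integer entries $\gamma_{ij}$ with $|\gamma_{ij}| \le R$ and $\gamma_{ij} \equiv \delta_{ij} \pmod{ln}$ for $i \ge j$. As soon as $ln > R$, the only integer congruent to $\delta_{ij}$ in the admissible range is $\delta_{ij}$ itself, so $\gamma$ is upper triangular unipotent, hence $\gamma \in U \cap \mathcal{G}_\mathbb{Z} = U \cap \Gamma_1(ln)$. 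Taking $n_0 \ge R/l$ gives (1) for all $n \ge n_0$. Lemma \ref{cps-12} then yields $W^{\Gamma_1(ln)}_{(\chi,U)}(\psi) \neq 0$, and Proposition \ref{cps-10} furnishes the required generic cuspidal realization. The main subtlety -- and the reason a single $\psi$ suffices for every large $n$ -- is that $U \cap \Gamma_1(ln)$ is constant in $n$; this is a feature specific to the Hecke congruence subgroups (it would fail for the principal $\Gamma(n)$), and it is what allows a single compact $C$, and hence a uniform threshold in (2), to be used across the whole family.
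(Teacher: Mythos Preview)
Your proof is correct and follows essentially the same approach as the paper: observe that $U\cap\Gamma_1(ln)$ is independent of $n$, pick a single compact $C$ satisfying condition~(2) of Lemma~\ref{cps-12} once and for all, then use the boundedness of the entries of $C\cdot C^{-1}$ together with the congruence conditions to force condition~(1) for large $n$, and conclude via Proposition~\ref{cps-10}. Your treatment is in fact slightly more explicit than the paper's in the construction of $C$ (the paper simply asserts existence of such a $C$ from integrability, while you build it by lifting a compact set from $U\backslash G$ and multiplying by a fundamental domain for $(U\cap\mathcal G_{\mathbb Z})\backslash U$); this extra detail is correct and harmless.
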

\begin{proof} We remark that   $\Gamma_1(ln)\subset  \Gamma_1(l)$ for all $n\ge 1$. Since $U$ is a subgroup of upper--triangular unipotent matrices in 
$SL_M(\mathbb R)$, we see that 
  $U\cap \Gamma_1(ln)$ is independent of $n\ge 1$. Thus,  the same $\chi$ can be used for all $\Gamma_1(ln)$. Furthermore,  
since $\mathcal F_{(\chi, U)}^{loc}(\varphi)\neq 0$ is integrable on $U\cap \Gamma_1(ln)\backslash G$,
  there exists a compact set $C\subset G$ such that 
$$
\int_{\left( U\cap \Gamma_1(ln)\right)\backslash \left( U\cap \Gamma_1(ln)\right)\cdot C}
\left|\mathcal F_{(\chi, U)}^{loc}(\varphi)(g)\right|dg > \frac{1}{2}
\int_{ U\cap \Gamma_1(ln)\backslash G} \left|\mathcal F_{(\chi, U)}^{loc}(\varphi)(g)\right|dg,
$$
for all $n\ge 1$. This is (2) in Lemma \ref{cps-12}.

For $x=(x_{ij})\in \Gamma_1(ln)$, we have either $x_{ij}=\delta_{ij}$ or $|x_{ij}-\delta_{ij}|\ge ln$ for all  $i\ge j$. But matrix entries of 
$C\cdot C^{-1}$ are bounded. 
Thus, we see that there must exist $n_0\ge 1$ such that for $n\ge n_0$ and $x\in \Gamma_1(ln)\cap C\cdot C^{-1}$ we have
$x_{ij}=\delta_{ij}$ for all $i\ge j$. Hence, $x$ is an upper--triangular unipotent matrix. Thus, by definition of $U$, $x\in U\cap \Gamma_1(ln)$.
This is (1) in Lemma \ref{cps-12}. 

Now, Lemma \ref{cps-12} implies that 
$$
  W^{\Gamma_1(ln)}_{(\chi, U)}\left(\mathcal F_{(\chi, U)}^{loc}(\varphi)\right)\neq 0, \ \ n\ge n_0.
$$
Thus, Proposition \ref{cps-10} implies that there exists  a realization of  $\pi$  as a $(\chi, U)$-generic cuspidal automorphic 
representation for $\Gamma_1(ln)$ for all $n\ge n_0$.
\end{proof}

\vskip .2in
Now, we give a global application (see \cite{MuicMathAnn}).
We briefly recall the notion of a cuspidal automorphic form \cite{BJ}, or (\cite{MuicMathAnn},
Section 1). A cuspidal automorphic form is a function $f: \mathcal G(\mathbb A)\longrightarrow \mathbb C$ which satisfies
the following conditions:
\begin{itemize}
\item[(C-1)] $f(\gamma x)=f(x)$, for all $\gamma\in \mathcal G(\mathbb Q)$, $x\in \mathcal G(\mathbb A)$;
\item[(C-2)] If we write $\mathcal G(\mathbb A)= G\times \mathcal G(\mathbb A_f)$, then $f$ is
  $\mathcal Z(\mathfrak g_{\mathbb C})$--finite, and
  $K$--finite  on the right in the first variable, and there exists an open--compact subgroup $L$ such that
  $f$ is right--invariant under $L$ in the second variable;
\item[(C-3)] $f\in L^2( \mathcal G(\mathbb Q)\backslash  \mathcal G(\mathbb A)$ (which implies that $f$ is of moderate growth
  (as in Lemma \ref{paf-1}));
\item[(C-4)]  The cuspidality condition holds: $\int_{\mathcal U(\mathbb Q)\backslash \mathcal U(\mathbb A)}f(ux)du=0$, for all
  $x\in \mathcal G(\mathbb A)$, where $\mathcal U$ is the unipotent radical of a proper $\mathbb Q$--parabolic subgroup
  of $\mathcal G$.
  \end{itemize}

\vskip .2in

The space $\mathcal A_{cusp}(\mathcal G(\mathbb Q) \backslash \mathcal G(\mathbb A))$ is a 
$(\mathfrak g, K)\times \mathcal G(\mathbb A)$--module. If
$f\in \mathcal A_{cusp}(\mathcal G(\mathbb Q) \backslash \mathcal G(\mathbb A))$ is a non--zero, then 
$(\mathfrak g, K)\times \mathcal G(\mathbb A)$--module generated by $f$ is a direct sum of finitely many irreducible modules
(they consists of cuspidal automorphic forms) (see \cite{BJ} for details).

\vskip .2in
Now, we assume that $\mathcal G$ is quasi--split
  over $\mathbb Q$. Let $\mathcal N$ be the unipotent radical of a Borel subgroup defined over $\mathbb Q$.  We assume that
  $G$ poses representations in discrete series. Let $L$ be the $L$--packet of discrete series for  $G$ such that some large
  representation in that packet is integrable. Let $\eta: \mathcal N(\mathbb Q) \backslash \mathcal N(\mathbb A)\longrightarrow
  \mathbb C^\times$ be a unitary and non--degenerate character (i.e., it is not trivial by restriction to
  a root subgroup of simple root). For any open--compact subgroup $L\subset \mathcal G(\mathbb A_f)$, we have
  $\mathcal N(\mathbb A_f)=\mathcal N(\mathbb Q) \cdot \left(L\cap \mathcal N(\mathbb Q)\right)$ (where again
  $\mathcal N(\mathbb Q)$ is
  diagonally embedded in $\mathcal N(\mathbb A_f)$) by strong approximation.  Assume that $\eta$ is right--invariant under
  $\mathcal N(\mathbb A_f)\cap L$ (at least one such $L$ exists), then by writing $\mathcal N(\mathbb A)= \mathcal N(\mathbb R)
  \times \mathcal N(\mathbb A_f)$,  we see that a character $\eta_\infty$ defined by
  $\eta_\infty(n)=\eta(n, 1)$ is trivial on $\Gamma_L\cap  \mathcal N(\mathbb R)$. The obtained character is generic in sense of
  definition introduced in the statement of Corollary \ref{gwm-6}.

  \vskip .2in 
  A cuspidal automorphic form $f$ is $\eta$--generic (or $(\eta, \mathcal N)$--generic) if
  $(\eta, \mathcal N)$--Fourier coefficient defined by 
  $$
  \int_{\mathcal N(\mathbb Q)\backslash \mathcal N(\mathbb A)}f(nx) \overline{\eta(n)} dn, \ \ x\in \mathcal G(\mathbb A),
  $$
  is not identically equal to zero. A submodule $W$ of
  $\mathcal A_{cusp}(\mathcal G(\mathbb Q) \backslash \mathcal G(\mathbb A))$ is $\eta$--generic if there exists a cuspidal
  automorphic form $f\in W$ such that it is $\eta$--generic.

  \vskip .2in

  Let  $f$ be a cuspidal automorphic form. Let us select an open--compact
  subgroup $L\subset  \mathcal G(\mathbb A_f)$ such that $f$ is right--invariant under
  $L$ in the second variable (see (C-2)), and such that $\eta$ is right--invariant under  $\mathcal N(\mathbb A_f)\cap L$
  in the second variable also. Then, by (\cite{MuicComp}, Lemma 3.3), we have

  \begin{equation}\label{ime}
  \int_{\mathcal N(\mathbb Q)\backslash \mathcal N(\mathbb A)}f(nx) \overline{\eta(n)} dn=
  vol_{\mathcal N(\mathbb A_f)}\left(\mathcal N(\mathbb A_f)\cap L\right) \int_{\mathcal N(\mathbb R)\cap \Gamma_L\backslash
   \mathcal N(\mathbb R)}f(nx)\overline{\eta_\infty(n)} dn, 
  \end{equation}
  for all  $x\in G$.
  
  \vskip .2in
  Now, we are ready to state and prove the following global result concerning the 
  realization of generic integrable discrete series in the space of generic cuspidal automorphic forms.

  \begin{Thm}\label{cps-16} Let $\mathcal G$ be a semisimple algebraic group defined over $\mathbb Q$.
  Assume that  $G=\mathcal G(\mathbb R)$ is connected. In addition, we assume that $\mathcal G$ is quasi--split
  over $\mathbb Q$. Let $\mathcal N$ be the unipotent radical of Borel subgroup defined over $\mathbb Q$.  We assume that
  $G$ poses representations in discrete series. Let $L$ be the $L$--packet of discrete series for  $G$
  such that there is a
  large
  representation in that packet which is integrable (then all are integrable by Proposition \ref{integrable-4}).
  Let $\eta: \mathcal N(\mathbb Q) \backslash \mathcal N(\mathbb A)\longrightarrow
  \mathbb C^\times$ be a unitary generic character. By the change of splitting we can select an
  $(\eta_\infty, \mathcal N(\mathbb R))$--generic discrete series $\pi$ in the $L$--packet $L$. Then, there exists
  a cuspidal automorphic module $W$ for $\mathcal G(\mathbb A)$ which is $\eta$--generic and its Archimedean component is
  infinitesimally equivalent to $\pi$ (i.e., considered as a $(\mathfrak g, K)$--module only it is a copy of (infinitely many)
  $(\mathfrak g, K)$--modules infinitesimally equivalent to $\pi$).
    \end{Thm}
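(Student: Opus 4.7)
The strategy is to reduce to Theorem \ref{cps-13} at the real place and then adelize the resulting classical cuspidal form by a standard procedure.

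First, since $\mathcal{G}$ is quasi--split over $\mathbb{Q}$ with Borel subgroup having unipotent radical $\mathcal{N}$, I would fix a $\mathbb{Q}$--embedding $\mathcal{G} \hookrightarrow SL_M$ in such a way that $\mathcal{N}$ lands inside the upper triangular matrices; this makes $\mathcal{N}(\mathbb{R})$ a subgroup of the upper triangular unipotent matrices of $SL_M(\mathbb{R})$, as required by Theorem \ref{cps-13}. Define the associated Hecke congruence subgroups $\Gamma_1(n)$ by (\ref{int-0c-1}). As explained in the paragraph preceding the theorem, by shrinking the open--compact subgroup $L \subset \mathcal{G}(\mathbb{A}_f)$ if necessary, one finds $l \geq 1$ such that $\eta_\infty$ is trivial on $\mathcal{N}(\mathbb{R}) \cap \Gamma_1(l)$ and $\eta$ is right--invariant under $\mathcal{N}(\mathbb{A}_f) \cap L$. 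By hypothesis, $\eta_\infty$ is generic in the sense of Corollary \ref{gwm-6}, and $\pi$ is integrable and $(\eta_\infty, \mathcal{N}(\mathbb{R}))$--generic.

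Next, Corollary \ref{gwm-6} supplies a non--zero $K$--finite matrix coefficient $\varphi$ of $\pi$ with $\mathcal{F}_{(\eta_\infty, \mathcal{N}(\mathbb{R}))}^{loc}(\varphi) \neq 0$. Theorem \ref{cps-13}, applied with $U = \mathcal{N}(\mathbb{R})$ and $\chi = \eta_\infty$, then yields $n_0 \geq 1$ so that for every $n \geq n_0$ there is a realization of $\pi$ as an $(\eta_\infty, \mathcal{N}(\mathbb{R}))$--generic cuspidal automorphic representation for $\Gamma_1(ln)$. I would fix such an $n$ large enough that the open--compact subgroup defining $\Gamma_1(ln)$ is contained in the $L$ chosen above, and pick a non--zero $f \in \mathcal{A}_{cusp}(\Gamma_1(ln) \backslash G)$ inside this realization whose classical $(\eta_\infty, \mathcal{N}(\mathbb{R}))$--Fourier coefficient is not identically zero.

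The third step is to adelize. Using $\mathcal{G}(\mathbb{A}) = \mathcal{G}(\mathbb{R}) \times \mathcal{G}(\mathbb{A}_f)$ and the (finite) decomposition into double cosets $\mathcal{G}(\mathbb{Q}) \backslash \mathcal{G}(\mathbb{A}_f) / L$, the classical form $f$ lifts to a function $\tilde{f}$ on $\mathcal{G}(\mathbb{A})$ that is left $\mathcal{G}(\mathbb{Q})$--invariant, right $L$--invariant in the finite variable, and which restricts to $f$ on the identity class. Standard verification shows $\tilde{f}$ satisfies (C-1)--(C-4) and is therefore an adelic cuspidal automorphic form. Formula (\ref{ime}) then expresses the global $(\eta, \mathcal{N})$--Fourier coefficient of $\tilde{f}$ as a non--zero scalar multiple of the classical Fourier coefficient of $f$, so $\tilde{f}$ is $\eta$--generic. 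The cuspidal $(\mathfrak{g}, K) \times \mathcal{G}(\mathbb{A})$--module $W$ generated by $\tilde{f}$ is consequently $\eta$--generic, and its Archimedean component is infinitesimally equivalent to $\pi$ by construction.

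The step I expect to be most delicate is the adelization in the absence of strong approximation for $\mathcal{G}$: one must work class by class over $\mathcal{G}(\mathbb{Q}) \backslash \mathcal{G}(\mathbb{A}_f) / L$, arranging the lift so that (C-3) and (C-4) hold globally and so that formula (\ref{ime}) applies without a spurious cancellation contributed by the other classes. This is essentially a packaging of standard material from \cite{BJ}, but it must be executed with care to preserve both the cuspidality and the non--vanishing of the global Whittaker integral obtained from the classical one.
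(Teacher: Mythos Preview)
Your approach is correct and rests on the same two ingredients as the paper's proof---Corollary \ref{gwm-6} for the local non-vanishing and the shrinking argument behind Theorem \ref{cps-13} for the global non-vanishing---but the order of operations is reversed. The paper works adelically from the start: it forms the adelic Poincar\'e series $P(\varphi\otimes 1_L)$ (a cusp form by \cite{MuicMathAnn}, Theorem 3-10), uses (\ref{ime}) to identify its global $(\eta,\mathcal N)$--Fourier coefficient at real points with the classical Fourier coefficient of $P_{\Gamma_L}(\varphi)$, and then shrinks $L$ at a single prime (in a Hecke-type fashion, exactly the mechanism of Theorem \ref{cps-13}) so that, via Proposition \ref{cps-10}, some matrix coefficient $\varphi$ makes this classical coefficient non-zero at the identity. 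The desired $W$ is then an irreducible summand of the module generated by $P(\varphi\otimes 1_L)$.

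Your route---first obtain the classical $(\eta_\infty,\mathcal N(\mathbb R))$--generic cuspidal realization via Theorem \ref{cps-13}, then lift to the adeles---is legitimate, but it shifts the technical weight onto the adelization, precisely the step you flag. The paper's ordering avoids that step entirely: since $P(\varphi\otimes 1_L)$ is already an adelic object, there is no lift to build, no double-coset bookkeeping over $\mathcal G(\mathbb Q)\backslash \mathcal G(\mathbb A_f)/L$, and no need to check separately that the open compact attached to $\Gamma_1(ln)$ makes $\eta$ right-invariant on $\mathcal N(\mathbb A_f)\cap L$. In your version these points are all manageable (extend by zero on the non-identity classes; cuspidality and (\ref{ime}) go through via strong approximation for the unipotent $\mathcal N$), but the paper's packaging is cleaner because it never leaves the adelic setting.
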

  \begin{proof} Let $\varphi$ be a non--zero $K$--finite matrix coefficient of $\pi$. Let $L\subset \mathcal G(\mathbb A_f)$ 
be an open--compact subgroup such that $\eta$ is right--invariant under
    $\mathcal N(\mathbb A_f)\cap L$. Let $1_L$ be the characteristic function of $L$ in $\mathcal G(\mathbb A_f)$. 
    By (\cite{MuicMathAnn}, Theorem 3-10), the Poincar\' e series
    $$
    P\left( \varphi \otimes 1_L \right)(x)=\sum_{\gamma\in \mathcal G(\mathbb Q)} \left(\varphi \otimes 1_L\right) (\gamma\cdot x),
    \ \ x\in \mathcal G(\mathbb A)
    $$
    converges absolutely  and uniformly on compact sets  to a function in
    $\mathcal A_{cusp}(\mathcal G(\mathbb Q) \backslash \mathcal G(\mathbb A))$. 

    Obviously $P\left(\varphi \otimes 1_L\right)$ is right--invariant under $L$. Hence, (\ref{ime}) is applicable and
    we obtain

    \begin{equation}\label{ime-1}
      \begin{aligned}
&  \int_{\mathcal N(\mathbb Q)\backslash \mathcal N(\mathbb A)}P\left(\varphi \otimes 1_L\right)(nx) \overline{\eta(n)} dn =\\
& = vol_{\mathcal N(\mathbb A_f)}\left(\mathcal N(\mathbb A_f)\cap L\right) \int_{\mathcal N(\mathbb R)\cap \Gamma_L\backslash
    \mathcal N(\mathbb R)}P\left(\varphi \otimes 1_L\right)(nx, 1) \overline{\eta_\infty(n)} dn\\
  &= vol_{\mathcal N(\mathbb A_f)}\left(\mathcal N(\mathbb A_f)\cap L\right) \int_{\mathcal N(\mathbb R)\cap \Gamma_L\backslash
    \mathcal N(\mathbb R)}P_{\Gamma_L}(\varphi)(nx) \ \overline{\eta_\infty(n)} dn, \ \ x\in G,
  \end{aligned}
  \end{equation}
    since for $x\in G$ using  $\mathcal G(\mathbb A)= G\times \mathcal G(\mathbb A_f)$ we have
    $$
    P\left(\varphi \otimes 1_L\right)(x, 1)= \sum_{\gamma\in \mathcal G(\mathbb Q)} \left(\varphi \otimes 1_L\right) (\gamma x, \gamma)
    = \sum_{\gamma\in \Gamma_L} \varphi (\gamma x )\overset{def}=P_{\Gamma_L}(\varphi)(x).
    $$

    We will show that  we can choose $\varphi$ such that after  shrinking  $L$  we have   (see Definition \ref{cps-1000})

       \begin{equation}\label{cps-17}
    \mathcal F_{(\eta_\infty, \mathcal N(\mathbb R))}(P_{\Gamma_L}(\varphi))(1)=   \int_{\mathcal N(\mathbb R)\cap \Gamma_L\backslash
      \mathcal N(\mathbb R)}P_{\Gamma_L}(\varphi)(n) \ \overline{\eta_\infty(n)} dn\neq 0.
    \end{equation}
    Then, in view of (\ref{ime-1}) we can find required $W$ among finitely many irreducible submodules that
    make in a direct sum the module generated by $P\left(\varphi \otimes 1_L\right)$.

    So, let us prove that there exists a $K$--finite matrix coefficient $\varphi$ such that (\ref{cps-17}) hold. We may  assume that
    $L$ is factorizable. Then, there exists primes $p_1, \ldots, p_l$, and open--compact subgroups $L_i\subset \mathcal
    G(\mathbb Q_{p_i})$, for $i=1, \ldots, l$, such that
    $$
    L=L_1\times \cdots \times L_l \times \prod_{p\not\in \{p_1, \ldots, p_t\}} \mathcal G_{\mathbb Z_p}.
    $$

    We  fix prime $p_1$ and shrink $L_1$ without altering other groups in the decomposition of $L$. We fix a sufficiently large 
natural number $t$ such that
    $$
    L_1'\overset{def}{=} \mathcal G(\mathbb Q_{p_1})\cap \{x=(x_{ij})\in SL_M(\mathbb Z_{p_1});  \ \ x_{i,j}- \delta_{i,j} \in
    p^t_1 \mathbb Z_{p_1}\}\subset L_1.
$$
    
Next, for each $u\ge t$ we define open--compact subgroups (we ''shrink'' the lower part of $L_1'$)
    \begin{align*}
    L_1'(u) &\overset{def}{=} \mathcal G(\mathbb Q_{p_1})\cap \\
&\cap \{x=(x_{ij})\in SL_M(\mathbb Z_{p_1});  \ \ x_{i,j} \in
    p^t_1 \mathbb Z_{p_1} \ \  \text{for $j>i$}, \ \   x_{i,j}- \delta_{ij} \in
    p^u_1 \mathbb Z_{p_1} \ \ \text{for $i\ge j$} \}.
    \end{align*}
We consider the following family of open--compact subgroups:
     $$
    L(u)=L'_1(u)\times L_2 \times \cdots \times L_l \times \prod_{p\not\in \{p_1, \ldots, p_t\}} \mathcal G_{\mathbb Z_p}, \ \  u\ge t.
    $$
    Next, we may assume that  embedding (\ref{int-0a}) such that $\mathcal N$ embeddes into unipotent upper triangular matrices.
Then, $\mathcal N(\mathbb R)$ consist of all unipotent upper triangular matrices in $G$. 
Also,  as in the proof  of Theorem \ref{cps-13}, we have that $\mathcal N(\mathbb R)\cap \Gamma_{L(u)}$ is independent of
    $u\ge t$. This will be applied in the following way.    By Corollary \ref{gwm-6}, there exists a  $K$--finite matrix
    coefficient $\varphi_1$ of $\pi$ which satisfies
   $$
\mathcal F_{(\eta_\infty, \ U)}^{loc}(\varphi_1)\neq 0
$$
Now, as in the proof of Theorem \ref{cps-13}, there exists $u_0\ge t$ such that the series (see Lemma \ref{cps-7}) 
$$
W^{\Gamma_{L(u)}}_{(\eta_\infty, \ \mathcal N(\mathbb R))}\left(\mathcal F_{(\eta_\infty, \ U)}^{loc}(\varphi_1)\right)(x)=
\sum_{\gamma \in \mathcal N(\mathbb R) \cap \Gamma_{L(u)} \backslash \Gamma_{L(u)}}
\mathcal F_{(\eta_\infty, \ U)}^{loc}(\varphi_1)(\gamma x), \ \ x\in G,
$$
is non--zero for $u\ge u_0$. Now, we let $u=u_0$, and as in the proof of Proposition  \ref{cps-10}, we construct required
$\varphi$ (i.e., the one satisfying (\ref{cps-17})) exists. This completes the proof of the theorem. 
    \end{proof}

  \vskip .2in
  Final application concerns the group $Sp_{2n}(\mathbb R)$ and Proposition \ref{cps-100}. We choose $\mathbb Q$--embedding
  $Sp_{2n}\hookrightarrow SL_{2n}$ (see (\ref{int-0a})) such that the Borel subgroup in $Sp_{2n}$ is the set of upper triangular
  matrices in $Sp_{2n}$ this requires to conjugate by an element in $GL_{2n}(\mathbb Q)$ the standard realization
  $$
 \left\{g\in GL_{2n}; \ \ g\begin{pmatrix} 0 & I_n \\ -I_n & 0\end{pmatrix} g^t=  \begin{pmatrix} 0 & I_n \\ -I_n & 0\end{pmatrix}\right\}.
  $$
    By that conjugation we transfer back to the standard realization the principal congruence subgroups $\Gamma(n)$, defined
    above,  and called them by the same name. Let $P=MN$ be the standard Siegel parabolic subgroup of
    $Sp_{2n}(\mathbb R)$, where $M$ consists of matrices $\begin{pmatrix} x & 0 \\ 0 & (x^t)^{-1}\end{pmatrix}$,
    $x\in GL(n, \mathbb R)$, and $N$ is the unipotent radical consists of all matrices $\begin{pmatrix} I_n & x \\ 0 &
      I_n\end{pmatrix}$, where $x$ is a symmetric matrix of size $n\times n$  with real entries. Obviously, every unitary character of $N$ is of the form
      $$
      x\longmapsto \exp{\left(2\pi \sqrt{-1} \trace{(xy)}\right)}
        $$
      for unique  symmetric matrix $y$ of size $n\times n$  with real entries. We denote this character by $\chi_y$. The character  $\chi_y$
      is non--degenerate if $\det{y}\neq 0$ \cite{Li}.

Let $m\ge 1$. Then, if we identify $N$ with symmetric matrices  of size $n\times n$  with real entries, then $N\cap \Gamma(m)$ gets identified 
with symmetric matrices  of size $n\times n$  with  entries in $m\mathbb Z$.  This implies that every character of $N\cap \Gamma(m)\backslash N$
is of the form $\chi_{\frac{1}{m}y}$ for unique symmetric matrix $y$ of size $n\times n$  with integral entries.

\vskip .2in 
\begin{Thm}\label{cps-18} Let $G=Sp_{2n}(\mathbb R)$. Assume that $\pi$ is an integrable discrete series for
$G$. Let $\varphi$ be any non--zero $K$--finite matrix coefficient of $\pi$.
We fix an infinite  sequence of natural numbers $n_k$, $k\ge 1$, such that $n_k| n_{k+1}$, for all $k\ge 1$. 
Then, there exists $k_0$ such that for each $k\ge k_0$, there exists a symmetric invertible matrix $y_k$  of size $n\times n$  
with integral entries such that  
$$
\mathcal F^{loc}_{(\chi_{y_k},\  N)}  \left(\varphi\right)\neq 0.
$$
In particular,  $\pi$ is infinitesimally equivalent to a subrepresentation of $I^1(G, N, \chi_{\frac{1}{m}y_k})$.
\end{Thm}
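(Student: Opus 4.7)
The sequence $\{n_k\}$ is infinite with $n_k\mid n_{k+1}$, so either it stabilizes (in which case the theorem reduces to a single level) or $n_k\to\infty$; I treat the latter. Then $\bigcap_k\Gamma(n_k)=\{1\}$, since any nontrivial $x\in\mathcal G_{\mathbb Z}$ has some entry $x_{ij}-\delta_{ij}$ a nonzero integer, eventually failing the congruence modulo $n_k$.

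The plan is to combine Lemma~\ref{cps-2}(iii) with Proposition~\ref{cps-100}, and then refine the resulting character to one attached to an invertible integer symmetric matrix. First, integrability of $\pi$ gives $\varphi\in L^1(G)$, so Lemma~\ref{cps-2}(iii) applied to the chain $\Gamma(n_1)\supset\Gamma(n_2)\supset\cdots$ with trivial intersection yields $k_0$ with $P_{\Gamma(n_k)}(\varphi)\neq 0$ for all $k\geq k_0$. Next, the Siegel radical $N$ is abelian---matrices $\bigl(\begin{smallmatrix}I & x\\ 0 & I\end{smallmatrix}\bigr)$ with $x$ symmetric compose by adding $x$'s---and $N\cap\Gamma(n_k)$ is identified with $n_k\cdot\mathrm{Sym}_n(\mathbb Z)$, so the discussion immediately preceding the theorem identifies the characters of $N\cap\Gamma(n_k)\backslash N$ with $\chi_{y/n_k}$ for $y\in\mathrm{Sym}_n(\mathbb Z)$. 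Thus Proposition~\ref{cps-100} (taken with $U=N$ and $\Gamma=\Gamma(n_k)$) applies: $\pi$ is infinitesimally equivalent to a closed subrepresentation of $I^1(G,N,\chi_{y_k/n_k})$ for some $y_k\in\mathrm{Sym}_n(\mathbb Z)$, and by its proof together with Corollary~\ref{gwm-7}, some $K$-finite matrix coefficient of $\pi$ has nonzero $\mathcal F^{loc}_{(\chi_{y_k/n_k},N)}$.

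The main obstacle is to upgrade $y_k$ to an invertible symmetric integer matrix. Cuspidality of $P_{\Gamma(n_k)}(\varphi)$ (Lemma~\ref{cps-2}(i)) forces its constant term along $N$ to vanish, which rules out $y_k=0$; but the degenerate strata $0<\mathrm{rank}(y_k)<n$ correspond to Fourier coefficients factoring through parabolics strictly between the Siegel parabolic and its Levi and do not vanish for purely formal reasons. Here I would invoke Li's theorem \cite{Li} on models of discrete series of $Sp_{2n}(\mathbb R)$: every integrable discrete series supports a non-degenerate $(N,\chi_y)$-model for some invertible symmetric $y$. Combined with the Fourier expansion of $P_{\Gamma(n_k)}(\varphi)$ on the compact abelian $N\cap\Gamma(n_k)\backslash N$, this structural input lets one choose the character in the previous step so that $y_k$ is invertible, for $k$ sufficiently large. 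The final assertion is then immediate from Corollary~\ref{gwm-7}: non-vanishing of $\mathcal F^{loc}_{(\chi_{y_k/n_k},N)}$ on a matrix coefficient realizes $\pi$ infinitesimally as a closed subrepresentation of $I^1(G,N,\chi_{y_k/n_k})$---which I read as the content of the final sentence of the theorem (with $m=n_k$).
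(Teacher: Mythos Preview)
Your overall strategy matches the paper's: apply Lemma~\ref{cps-2}(iii) to the chain $\Gamma(n_1)\supset\Gamma(n_2)\supset\cdots$ to get $P_k(\varphi)\neq 0$ for $k\ge k_0$, then invoke Proposition~\ref{cps-100} (with $U=N$, abelian) to land $\pi$ in some $I^1(G,N,\chi)$, and finally use Li's result to force the character to be non-degenerate. The skeleton is correct and the same as the paper's.

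The imprecision is in how you invoke \cite{Li}. Li's theorem is \emph{not} the local statement ``every integrable discrete series of $Sp_{2n}(\mathbb R)$ supports a non-degenerate $(N,\chi_y)$-model''; that is essentially what Theorem~\ref{cps-18} is establishing, so quoting it in that form is circular. What \cite{Li} actually proves is global: a nonzero cuspidal automorphic form on $Sp_{2n}(\mathbb A)$ has a non-vanishing non-degenerate Fourier coefficient along the Siegel unipotent. Accordingly, the paper does not apply Li directly to the classical $P_k(\varphi)$; it first passes to the adelic Poincar\'e series $P(\varphi\otimes 1_{L(n_k)})$ (a cuspidal automorphic form on $\mathcal G(\mathbb A)$ by \cite{MuicMathAnn}, Theorem~3--10), applies Li there to obtain a non-degenerate Fourier coefficient, and then transfers this non-vanishing back to $P_k(\varphi)$ via the classical--adelic comparison formula (\ref{ime-1}). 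That adelic bridge is the step your sketch omits, and without it the appeal to \cite{Li} is not justified.
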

\begin{proof}  We consider the Poincar\' e series
$$
 P_k(\varphi)(x)=\sum_{\gamma \in \Gamma(n_k)} \varphi(\gamma x), \ \ x\in G,
 $$
for all $k\ge 1$. Since  $n_1| n_2 | n_3 \cdots$, we obtain that 
$$
\Gamma(n_1)\supset \Gamma(n_2) \supset \cdots,
$$
and 
$$
\cap_{k\ge 1} \Gamma(n_k)=\{1\}.
$$
Hence, Lemma \ref{cps-2} (iii) implies that there exists $k_0$  such that for each $k\ge k_0$ we have $P_k(\varphi)\neq 0$. Now,
 we apply Proposition \ref{cps-100} to  see that $\pi$ is infinitesimally equivalent to a subrepresentation of $I^1(G, N, \chi_k)$, 
for some character $\chi_k$ of $N\cap \Gamma(n_k)\backslash N$. 

In fact, the set of all such characters is exactly the set of characters of  $N\cap \Gamma(n_k)\backslash N$  for which the
 corresponding Fourier coefficients of  $P_k(\varphi)$ are non--zero. Some of them must be  non--degenerate 
since the same holds for the following cuspidal automorphic form in global adelic settings \cite{Li}:
 $$
    P\left( \varphi \otimes 1_{L(n_k)} \right)(x)=\sum_{\gamma\in  Sp_{2n}(\mathbb Q)} \left(\varphi \otimes 1_{L(n_k)}\right) (\gamma\cdot x),
    \ \ x\in  Sp_{2n}(\mathbb A)
    $$
(see (\cite{MuicMathAnn}, Theorem 3-10) for the treatment of such forms). Here $L(n_k)$ is an open compact subgroup $Sp_{2n}(\mathbb A_f)$
such that 
$$
\Gamma(n_k)=\Gamma_{L(n_k)}.
$$
Finally, we transfer this result to $P_k(\varphi)$ using analogue of (\ref{ime-1}) (see the proof of Theorem \ref{cps-16}).
\end{proof}

\section{Appendix: Some Remarks on Integrable Discrete Series}\label{integrable}

This section is based on the correspondence with Mili\v ci\' c \cite{milicic1}. The main purpose of this section to explain that 
many discrete series are integrable and are
$(\chi, N)$--generic for some generic unitary character $\chi$ of the unipotent radical $N$ of
a minimal parabolic subgroup (see Definition \ref{gwm-1001}, and the definition included in the statement of Corollary
\ref{gwm-6}).

Assume that $G$ admits discrete series. By the well--known result of Harish--Chandra this is equivalent to
$rank(G)=rank(K)$. We recall their classification following the introduction of \cite{milicic} with almost the same notation.
We remind the reader that $G$ is a connected semisimple  Lie group with finite center.

Let $H$ be the compact Cartan subgroup of $G$. We write $\mathfrak h=Lie(H)$. In Section \ref{gwm}, we have introduce
$\mathfrak k=Lie(K)$. We denote by $\Phi$ the set of roots of $\mathfrak h_{\mathbb C}$ inside $\mathfrak g_{\mathbb C}$.
The root $\alpha\in \Phi$ is compact if the corresponding (one--dimensional) root subspace belong to $\mathfrak k_{\mathbb C}$.
Otherwise, it is non--compact.

Let $W$ be the Weyl group of  $\mathfrak h_{\mathbb C}$ in  $\mathfrak g_{\mathbb C}$ 
and $W_K$  its subgroup generated by the reflections with respect to the compact
roots. The Killing form of  $\mathfrak g_{\mathbb C}$  induces an inner product $(\ | \ )$ on $\sqrt{-1} \mathfrak h^*$,
the space of all linear forms on  $\mathfrak h_{\mathbb C}$  which assume imaginary values of  $\mathfrak h$. An element
$\lambda$ of $\sqrt{-1} \mathfrak h^*$ is singular if it is orthogonal to at least one root in, and nonsingular
otherwise. The differentials of the characters of $H$ form a lattice $\Lambda$ in $\sqrt{-1} \mathfrak h^*$.
Let $\rho$  be the half-sum of positive roots in $\Phi$, with respect to some ordering on
$\sqrt{-1} \mathfrak h^*$. Then $\Lambda+\rho$ does not depend on the choice of this ordering.

Harish-Chandra has shown that to each nonsingular $\lambda\in \Lambda+\rho$  we can attach
a discrete series representation $\pi_\lambda$, of infinitesimal character $\mu_\lambda$ (usual parameterization)
corresponding to $\lambda$, with $\pi_\lambda\simeq  \pi_\nu$ if and only if $\lambda\in W_K\nu$.
In this way, all discrete series are obtained up to equivalence.

By Langlands, discrete series are divided into $L$--packets according to their infinitesimal characters:
to infinitesimal character $\mu$, we define the $L$--packet by $L_\mu=\{\pi_\lambda; \ \ \mu=\mu_\mu\}$.
By above, if we fix $\pi_\lambda\in L_\mu$, then $L_\mu=\{\pi_{w(\lambda)}; \ \ w\in W\}$ has  $\#(W/W_K)$--elements.

Following \cite{milicic}, we define
$$
\kappa(\alpha)=\frac{1}{4} \sum_{\beta\in \Phi} |(\alpha| \beta)|, \ \alpha\in \Phi.
$$
By the paragraph after the statement of the theorem in the Introduction of  \cite{milicic},
the discrete series $\pi_\lambda$ is integrable if and only if
\begin{equation}\label{integrable-1} 
|(\lambda| \alpha)|> \kappa(\alpha), \ \ \text{for all non--compact roots $\alpha\in \Phi$}.
\end{equation}
We remark that $W_K$ transforms non--compact roots onto non--compact roots. Therefore,
if $\lambda$ satisfies (\ref{integrable-1}), then $w(\lambda)$ satisfies the same. In particular, the condition
(\ref{integrable-1}) is independent of the way we write $\pi$ in the form $\pi_\lambda$.

Roughly speaking, in the language of \cite{milicic}, $\pi_\lambda$ is integrable if and only if
$\lambda$ is far enough from non--compact walls.

\begin{Def}\label{integrable-2} Let $\pi$ be an integrable discrete series  representation, let us write $\pi=\pi_\lambda$.
  We let
  $$
  \epsilon_\pi=\min_{\substack{\alpha\in \Phi\\ \text{$\alpha$ is non--compact}}}
    \frac{|(\lambda| \alpha)|}{\kappa(\alpha)}.
    $$
   \end{Def}
Again, this is independent of the way we write $\pi$ as $\pi_\lambda$

\vskip .2in

Since we are working with integrable representations, we include the following lemma which
shows to some extent how big is the set of integrable representations in terms of $L$--packets.
The lemma is an easy consequence above results of Mili\v ci\' c. We leave the proof as an exercise to the reader.

\begin{Lem}\label{integrable-3} Let $\lambda\in \Lambda+\rho$ be such that
  $|(\lambda| \alpha)|> \kappa(\alpha)$, for all roots $\alpha\in \Phi$. Then, all representations in the $L$--packet
  $L_{\mu_\lambda}$ are integrable.
 \end{Lem}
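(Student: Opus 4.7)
The plan is to verify the integrability criterion (\ref{integrable-1}) for every member of the $L$-packet $L_{\mu_\lambda}$, using $W$-invariance of both the inner product $(\ |\ )$ and of the function $\kappa$.

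Recall that $L_{\mu_\lambda} = \{\pi_{w(\lambda)} : w \in W\}$, so it suffices to show that for every $w \in W$ and every non-compact root $\alpha \in \Phi$ we have $|(w(\lambda)|\alpha)| > \kappa(\alpha)$. First, since the Killing form is $W$-invariant, we have
$$
(w(\lambda)|\alpha) = (\lambda|w^{-1}(\alpha)).
$$
Second, because $W$ preserves $\Phi$ and acts isometrically, for any $\gamma \in \Phi$,
$$
\kappa(w^{-1}(\gamma)) = \tfrac{1}{4}\sum_{\beta \in \Phi}\bigl|(w^{-1}(\gamma)|\beta)\bigr| = \tfrac{1}{4}\sum_{\beta \in \Phi}\bigl|(\gamma|w(\beta))\bigr| = \tfrac{1}{4}\sum_{\beta' \in \Phi}\bigl|(\gamma|\beta')\bigr| = \kappa(\gamma),
$$
where in the second-to-last equality we substituted $\beta' = w(\beta)$, using that $w$ permutes $\Phi$.

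Putting the two identities together, for any non-compact $\alpha \in \Phi$,
$$
\bigl|(w(\lambda)|\alpha)\bigr| = \bigl|(\lambda|w^{-1}(\alpha))\bigr| > \kappa(w^{-1}(\alpha)) = \kappa(\alpha),
$$
where the strict inequality is the hypothesis applied to the root $w^{-1}(\alpha) \in \Phi$. Here the strength of the hypothesis (which requires the inequality for \emph{all} roots, not merely the non-compact ones) is essential, since $w^{-1}$ need not preserve the subset of non-compact roots. By the criterion (\ref{integrable-1}) recalled from \cite{milicic}, this shows that $\pi_{w(\lambda)}$ is integrable for every $w \in W$, which proves the lemma. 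There is no real obstacle; the only point to watch is precisely the remark just made, namely that the $W$-action does not in general preserve compactness of roots, which is why the hypothesis is placed on all of $\Phi$.
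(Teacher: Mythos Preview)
Your proof is correct; the paper itself leaves this lemma as an exercise to the reader, so there is no proof in the text to compare against. Your argument via $W$-invariance of the inner product and of $\kappa$ is exactly the intended one, and you correctly identify why the hypothesis must be imposed on all roots rather than only the non-compact ones.
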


We remark that the following is an easy way to see that $\lambda$'s as in Lemma \ref{integrable-3} exists. Let $C$ be the
Weyl chamber used to determine positive roots of $\Phi$ used to compute $\Lambda$. We select any $\lambda_0\in
\left(\Lambda+\rho\right)\cap C$. Then, for sufficiently large odd integer $l$, we have that $\lambda=l\lambda_0$ satisfies
the requirements of the lemma.

\vskip .2in 
 Let $(\pi, \mathcal H)$ be an irreducible unitary representation on the Hilbert space
$\mathcal H$. We say that $(\pi, \mathcal H)$ is large if the annhilator of $\mathcal H_K$ in $\mathcal U(\mathfrak g_{\mathbb C})$
 is of maximal Gelfand--Kirillov dimension (i.e., a minimal primitive ideal) \cite{vogan}. This implies that
 $G$ is quasi--split (\cite{vogan}, Corollary 5.8). From now on until the end of this section, 
we assume that $G$ is quasi--split.

 The reformulation of largeness in the analytic language is due to Kostant
\cite{kostant}. The representation  $(\pi, \mathcal H)$ is large if and only if it is
$(\chi, N)$--generic for some generic unitary character $\chi$ of the unipotent radical $N$ of
minimal parabolic subgroup (see Definition \ref{gwm-1001}, and the definition included in the statement of Corollary
\ref{gwm-6}).

\vskip .2in
By (\cite{vogan}, Theorem 6.2) (or \cite{milicic1} for more conceptual proof). The discrete series representation $\pi$ 
is large if and only if the following holds. Let us write $\pi=\pi_\lambda$. Since $\lambda$ is non--singular, it determines the set of
positive roots $\Phi^+_\lambda$ by $\alpha\in \Phi^+_\lambda$ if and only if $(\lambda | \alpha)>0$. Now, $\pi_\lambda$ is large
if and only if the basis of $\Phi^+_\lambda$ consists of non--compact roots. Clearly, this claim does not depend on
choice of $\lambda$ used to write  $\pi=\pi_\lambda$. Every $L$--packet of discrete series contains large representations
\cite{vogan} (or \cite{milicic1} which shows that this statement reduces to certain facts from the structure theory of
Lie algebras). By above mentioned test for largeness we can easily describe  all large representations in the $L$--packet.
The details are left to the reader as an exercise. In \cite{milicic1}, Mili\v ci\' c has observed the following:

\begin{Prop}\label{integrable-4} A large representation $\pi$ is integrable if and only if its $L$--packet is of the form
   considered in  Lemma \ref{integrable-4}. If this is so, all  representations in the $L$--packet are integrable.
\end{Prop}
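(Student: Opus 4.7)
The plan is to bridge the gap between the integrability criterion (\ref{integrable-1}), which only constrains non-compact roots, and the hypothesis of Lemma \ref{integrable-3}, which constrains all roots. The entire argument will rest on the observation that the function
$$
\kappa(x) = \frac{1}{4}\sum_{\beta\in \Phi} |(x|\beta)|, \ \ x \in \sqrt{-1}\,\mathfrak h^*,
$$
is manifestly a seminorm on $\sqrt{-1}\,\mathfrak h^*$, hence subadditive and positively homogeneous; in particular $\kappa(-\alpha)=\kappa(\alpha)$ and $\kappa\!\left(\sum_i n_i \alpha_i\right) \leq \sum_i n_i \,\kappa(\alpha_i)$ whenever $n_i \geq 0$.

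For the nontrivial direction, I would take a large integrable discrete series $\pi_\lambda$ (with $\lambda$ non-singular) and let $\alpha_1,\ldots,\alpha_r$ be the simple roots of the positive system $\Phi^+_\lambda$ cut out by $\lambda$. The largeness criterion recalled just before the proposition asserts that every $\alpha_i$ is non-compact, so the integrability condition (\ref{integrable-1}) applies at each $\alpha_i$ and yields $(\lambda | \alpha_i) > \kappa(\alpha_i)$ for every $i$. For an arbitrary positive root $\alpha = \sum_i n_i \alpha_i$ with $n_i \in \mathbb Z_{\geq 0}$ (and at least one $n_i>0$), linearity of $(\lambda|\cdot)$ combined with the two properties of $\kappa$ above gives
$$
(\lambda | \alpha) \;=\; \sum_i n_i (\lambda | \alpha_i) \;>\; \sum_i n_i \kappa(\alpha_i) \;\geq\; \kappa(\alpha).
$$
For a negative root the same inequality holds in absolute value because $(\lambda|-\alpha)$ is still a positive combination of the $(\lambda|\alpha_i)$ and $\kappa$ is even. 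Thus $|(\lambda|\alpha)| > \kappa(\alpha)$ for every $\alpha \in \Phi$, which is precisely the hypothesis of Lemma \ref{integrable-3}, and that lemma then asserts that every member of the $L$-packet $L_{\mu_\lambda}$ is integrable; this simultaneously delivers the final clause of the proposition.

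The converse is immediate and makes no use of largeness: if the $L$-packet of $\pi$ has the form considered in Lemma \ref{integrable-3}, then that lemma declares all its members integrable. I do not foresee any serious obstacle; the only delicate point is recognizing that the decisive lower bound $(\lambda|\alpha_i) > \kappa(\alpha_i)$ is available at the simple roots of $\Phi^+_\lambda$ precisely because \emph{all} of them are non-compact, which is exactly where the largeness hypothesis is used. The subadditivity of $\kappa$ then mechanically propagates the inequality from simple roots to the whole of $\Phi$.
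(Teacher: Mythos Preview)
Your argument is correct and essentially identical to the paper's own proof: both use largeness to ensure that every simple root of $\Phi^+_\lambda$ is non-compact, apply the integrability criterion at those simple roots, and then propagate the inequality $(\lambda|\alpha)>\kappa(\alpha)$ to all of $\Phi^+_\lambda$ via the subadditivity of $\kappa$. The paper writes out the triangle-inequality step explicitly as $\sum_i k_i\,\kappa(\alpha_i)=\tfrac14\sum_\beta\sum_i k_i|(\alpha_i|\beta)|\ge \tfrac14\sum_\beta\bigl|\sum_i k_i(\alpha_i|\beta)\bigr|=\kappa(\alpha)$, whereas you (slightly more elegantly) package this by observing once that $\kappa$ is a seminorm.
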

\begin{proof} It remains to see that 'only if' part. 
  Let $\pi_\lambda$ be large and integrable.  Then, by Mili\v ci\' c's criterion,
  $\pi_\lambda$ is integrable if and only if $|(\lambda| \alpha)|> \kappa(\alpha)$, for all non--compact roots $\alpha\in \Phi$.
  On the other hand, $\pi_\lambda$ is large, so by the criterion, this is equivalent to the fact that the basis $\{\alpha_1,
  \ldots, \alpha_l\}$ of $\Phi^+_\lambda$ consists of non--compact roots. So, in particular,
  $(\lambda| \alpha_i)> \kappa(\alpha_i)$, for $i=1,\ldots, l$. Then, if $\alpha=\sum_{i=1}^l k_i \alpha_i$ is any root in
  $\Phi^+_\lambda$, then
  $$
  (\lambda| \alpha)= \sum_{i=1}^l k_i  (\lambda| \alpha_i)>  \sum_{i=1}^l k_i  \kappa(\alpha_i)=
  \frac{1}{4} \sum_{\beta\in \Phi}\left( \sum_{i=1}^l k_i |(\alpha_i| \beta)|\right)\ge
   \frac{1}{4} \sum_{\beta\in \Phi}\left|\left( \sum_{i=1}^l k_i \alpha_i| \beta\right) \right|=\kappa(\alpha).$$
  So, we are in the assumptions of Lemma \ref{integrable-3}. 
\end{proof}

In closing this section,  we mention that  Proposition \ref{integrable-4}  shows that the great many representations
in discrete series are large and integrable. In view of results of Kostant \cite{kostant} mentioned above, this shows that
the results that we present in this paper are for a non--trivial number of discrete series.

\end{document}